\documentclass{amsart}
\usepackage{amsmath,amssymb,amsthm,amsfonts,mathdots,amscd,bbm}
\usepackage[numeric]{amsrefs}
\input colordvi

\swapnumbers

\setcounter{MaxMatrixCols}{10}

\newtheorem{thm}[equation]{Theorem}
\newtheorem{cor}[equation]{Corollary}
\newtheorem{lem}[equation]{Lemma}

\newtheorem{prop}[equation]{Proposition}
\newtheorem{definition}[equation]{Definition}

\newcommand{\thmref}[1]{theorem~\ref{#1}}

\newcommand{\lemref}[1]{lemma~\ref{#1}}
\newcommand{\secref}[1]{section~\ref{#1}}

\DeclareMathOperator{\sg}{sgn}

\DeclareMathOperator{\tr}{tr}

\numberwithin{equation}{section}

\renewcommand\a{\alpha}

\newcommand\g{\gamma}
\renewcommand\d{\delta}
\newcommand\e{\varepsilon}
\renewcommand\l{\lambda}
\renewcommand\L{\mathcal L}

\newcommand\G{\Gamma}
\newcommand\f{\frac}
\newcommand\smallf[2]{{\textstyle{\frac{#1}{#2}}}}
\newcommand\srel[2]{\begin{smallmatrix} {#1} \\ {#2} \end{smallmatrix}}

\newcommand{\Z}{{\mathbb{Z}}}
\newcommand{\R}{{\mathbb{R}}}
\newcommand{\RP}{{\mathbb{RP}}}
\newcommand{\C}{{\mathbb{C}}}
\newcommand{\A}{{\mathbb{A}}}
\newcommand{\Q}{{\mathbb{Q}}}

\newcommand{\Sch}{{\mathcal{S}}}
\newcommand\re{\text{Re~}}

\renewcommand\Re{\text{Re~}}

\renewcommand\i{^{-1}}
\renewcommand\({\left(}
\renewcommand\){\right)}
\newcommand{\ttwo}[4]{
\(\begin{smallmatrix}{#1} & {#2}
\\ {#3} & {#4} \end{smallmatrix}\)}

\newcommand{\sgn}{\operatorname{sgn}}

\newcommand{\gobble}[1]{}
  \newcommand{\rangeref}[2]{%
    \ref{#1}--\afterassignment\gobble\fam 0\ref{#2}%
  }
\makeatletter
\def\imod#1{\allowbreak\mkern5mu({\operator@font mod}\,#1)}
\makeatother

\begin{document}

\title{The archimedean theory of the Exterior Square $L$-functions over $\Q$}

\author{Stephen D. Miller}
\address{Department of Mathematics, Rutgers University, Piscataway, NJ 08854-8019}
\email{miller@math.rutgers.edu}
\thanks{Partially supported
by NSF grant DMS-0901594 and an Alfred P. Sloan Foundation Fellowship.}

\author{Wilfried Schmid}
\address{Department of Mathematics, Harvard University, Cambridge, MA 02138}
\email{schmid@math.harvard.edu}
\thanks{Partially supported by DARPA grant HR0011-04-1-0031 and
NSF grant DMS-0500922}

\date{September 12, 2011}

\dedicatory{In memory of  Joseph Shalika}

\subjclass[2000]{11F55, 11F66}

\maketitle

\begin{abstract}
The analytic properties of automorphic $L$-functions have historically been obtained either through integral
representations (the ``Rankin-Selberg method''), or  properties of the Fourier expansions of Eisenstein series (the
``Langlands-Shahidi method''). We introduce a  method based on pairings of automorphic distributions, that appears
to be applicable to a wide variety of $L$-functions, including all which have integral representations.  In some
sense our method could be considered a completion of the Rankin-Selberg method because of its common features.
 We consider a particular but representative example, the exterior square $L$-functions on $GL(n)$, by constructing a pairing which we compute  as  a product of this $L$-function times an explicit ratio of Gamma functions.  We use this to deduce that  exterior square $L$-functions, when multiplied by the Gamma factors predicted by Langlands, are holomorphic on $\C-\{0,1\}$ with at most simple poles at 0 and 1, proving a conjecture of Langlands which has not been obtained by the existing two methods.
\end{abstract}

\section{Introduction}
\label{introsec}

Let $\pi=\otimes_{p\le \infty} \pi_p$ be a cuspidal automorphic representation of $GL(n)$ over $\Q$.  The adelic
representation $\pi$ is composed of $\pi_\infty$,  an archimedean representation of $GL(n,\R)$, and nonarchimedean
representations $\pi_p$ of $GL(n,\Q_p)$ for each prime $p$.  For all places $p< \infty$ outside of a finite set
$S$, $\pi_p$ is unramified and parameterized by
 principal series parameters $\{\a_{p,j}\}$.  Letting $A_p \in
GL(n,\C)$ denote the diagonal matrix $\operatorname{diag}(\a_{p,1},\ldots,\a_{p,n})$ and $\rho$ a finite
dimensional representation of $GL(n,\C)$,  Langlands \cite{eulerproducts} predicts that his $L$-functions
\begin{equation}\label{lfns}
   L^S(s,\pi,\rho) \ \ = \ \
     \prod_{p \,\notin\, S}\det\(1-\rho(A_p)\, p^{-s}\)\i \, , \ \ \  \ \Re{s} \ \gg \ 0 \,,
\end{equation}
possess certain analytic properties similar to those held by the Riemann $\zeta$-function.  In particular, he
posits the existence of factors $L_p(s,\pi,\rho)$ for each place $p$ ---   agreeing with the factor in (\ref{lfns})
when $p\notin S$ ---  such that  completed $L$-function
\begin{equation}\label{lambdafns}
    \Lambda(s,\pi, \rho) \ \ = \ \ \prod_{p
    \, \le \,
    \infty}L_p(s,\pi,\rho)\, ,  \ \ \  \ \Re{s} \ \gg \ 0 \,,
\end{equation}
has an analytic continuation to $\C-\{0,1\}$.    The completed $L$-function may have poles at $s=0$ or $1$, as it
does in the case of the completion of the Riemann $\zeta$-function $\xi(s)=\pi^{-s/2}\G(\f s2)\zeta(s)$.  Since
$L$-functions sometimes factor as products of $\zeta(s)$ times other factors, the best statement one can hope for
is the following property we call ``full holomorphy'':
\begin{definition}\label{def:fullholom}
A partial product of $L_p(s,\pi,\rho)$ from (\ref{lambdafns}) over a subset of places $\{p\le \infty\}$ is said to
be ``fully holomorphic'' if it has meromorphic continuation to $s\in \C$, with no poles outside of $\{s=0,1\}$.
\end{definition}

In addition to being fully holomorphic,  Langlands also conjectures that $\Lambda(s,\pi,\rho)$ satisfies a
functional equation of the form $\Lambda(1-s,\pi,\rho) =  \omega_{\hbox{root}} \, q^s\,
\Lambda(s,\tilde{\pi},\rho)$   for some $\omega_{\hbox{root}}\in \C^*$ and $q\in \Z_{> 0}$.  Here as
elsewhere in the paper $\tilde{\pi}$ refers to the automorphic representation contragredient to $\pi$; its
$L$-function is $\Lambda(s,\tilde\pi,\rho)=\overline{\Lambda(\bar{s},\pi,\rho)}$.  Langlands himself
\cite{langlandsrealgroups} provided a formula for $L_\infty(s,\pi,\rho)$,  while Harris and Taylor
\cite{harristaylor} and Henniart \cite{henniart}  did likewise for $L_p(s,\pi,\rho)$ when $p<\infty$.

Though in general almost nothing is known about this conjecture, several striking cases have been established.  For
example, the conjecture was quickly established by Godement and Jacquet \cite{goja} for the standard representation
$Stan$ of $GL(n,\C)$. In general, Langlands further conjectures that his $L$-functions are always in fact equal to
the $L$-functions $L(s,\Pi,Stan)$ of automorphic forms on $GL(d,\Q)\backslash GL(d,\A)$, where $d$ is the dimension
of $\rho$; the conjectured analytic properties would of course then
 be consequences of \cite{goja}.

Moreover, Langlands' general conjectures \cite{langlandsdc} about the holomorphy of his $L$-functions may be stated
not just for $GL(n)$ over $\Q$, but for general reductive groups and global fields. Two well known techniques exist
for approaching certain cases of this conjecture: the method of integral representations (the Rankin-Selberg
method), and the method of Fourier coefficients  of Eisenstein series (the Langlands-Shahidi method).  Each has had
stunning success in a number of examples, yet neither seems capable of treating the full picture. In particular,
typically even in cases where both methods are applicable, only partial results are known. The Rankin-Selberg
method usually gives a fully holomorphic expression which is related, but not equal to $\Lambda(s,\pi,\rho)$; the
Langlands-Shahidi method treats exactly $\Lambda(s,\pi,\rho)$ for many examples of $\pi$ and $\rho$, but often
cannot prove the full holomorphy because the Eisenstein series it captures $\Lambda(s,\pi,\rho)$ from might have
unwanted poles. The Langlands-Shahidi method also produces the  functional equation for $\Lambda(s,\pi,\rho)$.

In this paper we introduce a   new   technique to obtain the analytic properties of $L$-functions using automorphic
distributions, which can bridge this gap and prove new instances of Langlands' conjectures for
$\Lambda(s,\pi,\rho)$ --- instances that had not been obtained using the combined strength of known results.
 We shall consider the
$\f{n(n-1)}{2}$-dimensional exterior square representation of $GL(n,\C)$, tensored with a Dirichlet character
$\chi$.  With the notation as earlier,
\begin{equation}\label{extsqlocdef}
    L_p(s,\pi,Ext^2\otimes\chi) \  \ = \ \ \prod_{1\le j < k \le n}
    (1\,-\,\a_{p,j}\,\a_{p,k}\,\chi(p)\,p^{-s})\i
\end{equation}
at primes $p$ for which both $\pi$ and $\chi$ are unramified. The factor $L_\infty(s,\pi,Ext^2\otimes\chi)$ is a
certain product of Gamma functions which we will describe  in \secref{sec:functionalequation}.  We continue with
the notation   $L^S(s,\pi,\rho)=\prod_{p\notin S} L_p(s,\pi,\rho)$,   whether $S$ includes the place at infinity or
not. Our main result is the following:
\begin{thm}\label{mainthmwiths}
    Let $\pi$ be a cuspidal automorphic representation of $GL(n)$ over $\Q$,  $\chi$ a Dirichlet character,  and $S$ any finite subset of places including the ramified nonarchimedean primes for $\pi$ and $\chi$
    (in particular $S$  need not include
    the archimedean place).  Then
   $L^S(s,\pi,Ext^2\otimes\chi)$   is fully holomorphic  in the sense of definition~\ref{def:fullholom}, with  at most simple poles at $s=0$ and 1.
\end{thm}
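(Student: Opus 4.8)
The plan is to construct an automorphic distribution attached to $\pi$ on $GL(n,\R)$, restrict it suitably to a lower-rank subgroup, and pair it against an Eisenstein-type distribution (or a second automorphic distribution twisted by $\chi$) so that the resulting pairing unfolds to the Euler product $L^S(s,\pi,Ext^2\otimes\chi)$ times an explicit ratio of Gamma functions. First I would recall the notion of the automorphic distribution $\tau_\pi$ associated to a cuspidal $\pi$: it is the boundary distribution of an automorphic form, living in a principal series representation, and it carries the same arithmetic information (Hecke eigenvalues, hence the $\a_{p,j}$) as $\pi$ itself. The archimedean local factor enters because pairing distributions, rather than smooth automorphic forms, produces an intertwining operator whose normalization is precisely a product of Gamma functions; this is where $L_\infty(s,\pi,Ext^2\otimes\chi)$ appears naturally, which is why $S$ need not contain the archimedean place.

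The key steps, in order, would be: (1) set up the exterior-square pairing — identify the correct subgroup embedding and the correct test vector (an Eisenstein distribution built from $\chi$ and the parameter $s$) so that the global pairing is well-defined and meromorphic in $s$; (2) unfold the pairing adelically, showing the nonarchimedean places at $p\notin S$ each contribute the local factor $L_p(s,\pi,Ext^2\otimes\chi)$ of \eqref{extsqlocdef}, while the places in $S$ and the archimedean place contribute the remaining factors; (3) carry out the archimedean computation, evaluating the relevant integral of distributions as an explicit ratio of Gamma functions — this is the substantive local harmonic analysis and presumably the heart of the paper referenced by \secref{sec:functionalequation}; (4) establish that the global pairing, being a convergent integral of a rapidly decreasing object against a distribution depending meromorphically on $s$, is itself holomorphic away from the poles of the Eisenstein distribution, which occur only at $s=0,1$ and are at most simple; (5) combine: since the pairing equals $L^S(s,\pi,Ext^2\otimes\chi)$ times a ratio of Gamma functions that is itself entire and nonvanishing in the relevant region (or whose poles/zeros are accounted for), we can solve for $L^S$ and conclude it is fully holomorphic with at most simple poles at $0$ and $1$.

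The main obstacle I expect is step (3) together with the analytic control in step (4): one must show the pairing of distributions genuinely converges and defines a holomorphic function of $s$ off $\{0,1\}$, which requires careful estimates on the wavefront sets (or the decay of the relevant Fourier/Whittaker expansions) so that the product of a distribution supported on a small set against an automorphic distribution makes sense and is continuous in $s$; the cuspidality of $\pi$ is what guarantees the requisite rapid decay, but turning this into a clean meromorphy statement for the pairing is delicate. A secondary subtlety is ensuring the ratio of Gamma functions produced at the archimedean place has no zeros that would spuriously create poles of $L^S$ — i.e.\ that dividing out the Gamma factor does not introduce new singularities outside $\{0,1\}$; this is handled by matching against the explicit form of $L_\infty(s,\pi,Ext^2\otimes\chi)$ from \secref{sec:functionalequation} and checking the combinatorics of the shifts in the Langlands parameters of $\pi_\infty$. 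Once these are in place, the theorem follows for $L^S$ with $S$ as in the statement, including the case where $\infty\notin S$, precisely because the archimedean factor has been computed and is available to be reinstated or removed at will.
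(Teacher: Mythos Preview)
Your outline matches the paper's architecture: pair the cuspidal automorphic distribution against a mirabolic Eisenstein distribution, unfold via the Jacquet--Shalika mechanism, compute the archimedean local integral explicitly, and divide. But you have misidentified where the difficulty lies. What you call a ``secondary subtlety'' in step~(5) is in fact the main content of the proof, and your description of it is not accurate enough to go through.

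The archimedean local pairing does \emph{not} produce $L_\infty(s,\pi,Ext^2\otimes\chi)$. It produces a different object
\[
\mathcal{G}(s)\ =\ \prod_{\substack{1\le i<j\le 2n\\ i+j\le 2n}} G_{\d_i+\d_j+\eta}(s-\l_i-\l_j),
\]
where $(\l,\d)$ are Casselman embedding parameters and the $G_\d$ are ratios of $\G_\R$-factors as in~(\ref{eisen3b}). This $\mathcal{G}(s)$ has zeros, coming from the $\G_\R$ in its denominators, and those zeros could a priori become poles of $L^S$ upon division. Ruling this out is the work of Section~\ref{sec:fullholomorphy}, and it is not a routine check: one must (i) choose a \emph{particular} Casselman embedding of $\pi_\infty'$ (Proposition~\ref{niceembedding}), ordering the inducing data so that every pole of $L_\infty$ in the strip $\tfrac12\le\Re s\le 1$ is matched, with multiplicity, by a pole of $\mathcal{G}(s)$; (ii) split $\mathcal{G}$ into six sub-products $\mathcal{G}_1,\dots,\mathcal{G}_6$ and verify none has a zero in that strip (Proposition~\ref{fullproveprop}); and (iii) invoke the functional equation~(\ref{adelicpenufe}) of the pairing to transport holomorphy from $\Re s\ge\tfrac12$ to $\Re s<\tfrac12$. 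Step~(iii) is essential: the pairing itself is holomorphic on $\C\setminus\{1\}$ (Theorem~\ref{thmpairing}), not $\C\setminus\{0,1\}$, so the possible pole at $s=0$ in the theorem only enters through this reflection.

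You also do not say how to handle the ramified nonarchimedean places $p\in T=S\setminus\{\infty\}$. The unfolded pairing carries a factor $\prod_{p\in T}\Psi_p(s,W_p,\Phi_p)$, and one needs that for every fixed $s$ this can be made nonzero by choosing $W_p,\Phi_p$ appropriately; this is Belt's result~\cite{belt}, and it is what lets one strip off those factors and conclude that $\mathcal{G}(s)\,L^T(s,\pi,Ext^2\otimes\chi)$ is itself fully holomorphic. When $\infty\notin S$, the archimedean analogue of Belt's result already gives the theorem directly, so the Gamma-factor analysis above is needed only when $\infty\in S$.
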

The main contribution of our method is that  we do not insist that $\infty\in S$, as one frequently does using the
Rankin-Selberg method; we hope it will be applicable to other $L$-functions as well. The special case when
$S=\emptyset$ and $\chi$ is equal to the trivial character corresponds classically to automorphic forms on
$GL(n,\Z)\backslash GL(n,\R)$, i.e., ``full level'' ones.  In this case   $L^S(s,\pi,Ext^2)$   equals the completed
$L$-function $\Lambda(s,\pi,Ext^2)$ itself and the theorem reads
\begin{cor}\label{mainthm}
If $\pi=\otimes_{p\le \infty}\pi_p$ is a cuspidal automorphic representation of $GL(n)$ over $\Q$ which is
unramified at each $p<\infty$ (so that $\pi$ corresponds to a cusp   form on the full level quotient
$GL(n,\Z)\backslash GL(n,\R)$), then $\Lambda(s,\pi,Ext^2)$ is fully holomorphic,    with at most simple poles at
$s=0$ and 1.
\end{cor}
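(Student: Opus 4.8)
The Corollary is the special case of \thmref{mainthmwiths} in which $S=\emptyset$ and $\chi$ is trivial: when $\pi$ is unramified at every finite place one has $L^\emptyset(s,\pi,Ext^2)=\Lambda(s,\pi,Ext^2)$, so ``fully holomorphic, with at most simple poles at $0$ and $1$'' for the former is precisely the assertion for the latter. Thus the plan is to explain how \thmref{mainthmwiths} itself would be proved by the method of automorphic distributions announced in the introduction; the Corollary then falls out with no extra work, and in an especially clean form because there are no ramified finite places to track. The first step is to attach to $\pi$ its \emph{automorphic distribution} $\tau$: the distribution vector of $\pi_\infty$ arising as the boundary value of the automorphic form, spanning a copy of $\pi_\infty$ inside the distributions on $GL(n,\Z)\backslash GL(n,\R)$ (respectively its adelic counterpart), and still carrying all the arithmetic through its $\G$-invariance, its moderate growth, and its controlled wavefront set. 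The second ingredient is a degenerate Eisenstein distribution $E_s$, built on an auxiliary general linear group according to the Jacquet--Shalika realization of the exterior square integral (the precise group depending on the parity of $n$), carrying the complex parameter $s$ and the character $\chi$; after the standard normalization its only poles in $s$ lie at $0$ and $1$ and are at most simple, by Langlands' theory of Eisenstein series for this one-parameter induction.

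Next I would form the global pairing $\langle\tau,E_s\rangle$ over the relevant unipotent-periodized quotient; vanishing of the appropriate wavefront sets is what makes this pairing well defined in the distributional sense. Two features make it the pivot of the argument. First, since $\tau$ comes from a cusp form it is rapidly decreasing along the period, while $E_s$ has moderate growth and depends holomorphically on $s$ away from $\{0,1\}$, so the pairing is automatically meromorphic in $s$ with at most simple poles at $0$ and $1$ and none elsewhere --- the analytic continuation is, in effect, free. Second, unfolding the period factors the pairing as an Euler product $\prod_{p\le\infty}Z_p(s)$ of local zeta integrals: at a finite $p\notin S$ the Jacquet--Shalika unramified computation gives $Z_p(s)=L_p(s,\pi,Ext^2\otimes\chi)$ on the nose; at a ramified finite $p\in S$ one gets some nonzero element of $\C(p^{-s})$ whose pole locus must be controlled; and at the archimedean place $Z_\infty(s)$ must be identified.

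The technical heart, and the step I expect to be the main obstacle, is precisely the archimedean local integral: one must prove that $Z_\infty(s)$ equals an explicit ratio of Gamma functions, agreeing up to an elementary nonvanishing factor with Langlands' predicted $L_\infty(s,\pi,Ext^2\otimes\chi)$. Here the distributional formalism does genuine work, since $\pi_\infty$ is an arbitrary irreducible Casselman--Wallach representation of $GL(n,\R)$: one reduces to an induced (principal series) model, writes $\tau$ and $E_s$ as explicit homogeneous distributions on the relevant Euclidean spaces, and evaluates the resulting integral by repeated use of Mellin-transform and beta-integral identities for products of $|x|^{a}\sgn(x)^{\e}$ against exponentials, tracking signs and Gamma shifts carefully. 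This is the content flagged by the paper's title and developed in \secref{sec:functionalequation}, and it is exactly what permits us to leave $\infty$ out of $S$: because $Z_\infty$ is computed outright rather than merely bounded, it can be divided off while still controlling the answer, whereas the Rankin--Selberg method would at this point absorb $\infty$ into $S$ and retrieve only the partial $L$-function.

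Granting the archimedean evaluation, the conclusion is bookkeeping. The global pairing --- holomorphic on $\C-\{0,1\}$ with at most simple poles at $0$ and $1$ --- equals $\big(\prod_{p\in S,\ p<\infty}Z_p(s)\big)\,Z_\infty(s)\,\prod_{p\notin S}L_p(s,\pi,Ext^2\otimes\chi)$. Dividing by the finitely many bad finite factors (each a nonzero rational function of $p^{-s}$, so harmless away from its finitely many zeros, and one checks no new pole is created inside $\C-\{0,1\}$) and, when $\infty\notin S$, by $Z_\infty(s)$ (whose zeros and poles are the explicitly known Gamma ones) leaves $L^S(s,\pi,Ext^2\otimes\chi)$ with the asserted holomorphy. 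For the Corollary, $S=\emptyset$, there are no bad finite factors, and $Z_\infty$ agrees with $L_\infty(s,\pi,Ext^2)$ up to a nonvanishing exponential factor, so the pairing is $\Lambda(s,\pi,Ext^2)$ up to a nonvanishing entire factor and full holomorphy with at most simple poles at $0$ and $1$ is immediate. Finally I would check that the pairing is not identically zero --- typically arranged by choosing the realizing test vectors so that some local factor is a nonzero constant --- so that the resulting identity has content.
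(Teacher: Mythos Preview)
Your overall architecture is right and matches the paper's strategy: pair the cuspidal automorphic distribution $\tau$ against a mirabolic Eisenstein distribution $E(s)$, invoke \thmref{thmpairing} for the holomorphy of the pairing on $\C-\{1\}$ with at most a simple pole, unfold via the Jacquet--Shalika mechanism, and identify the unramified Euler factors. The Corollary is indeed just the $S=\emptyset$ specialization of \thmref{mainthmwiths}.

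However, there is a genuine gap at the archimedean step. You write that $Z_\infty(s)$ ``agrees up to an elementary nonvanishing factor with Langlands' predicted $L_\infty(s,\pi,Ext^2\otimes\chi)$,'' and later that ``$Z_\infty$ agrees with $L_\infty(s,\pi,Ext^2)$ up to a nonvanishing exponential factor.'' This is false, and the paper says so explicitly in the introduction. The archimedean local integral is computed in \secref{sec:localintegrals} to be (up to a sign) the product ${\mathcal G}(s)=\prod_{i+j\le 2n} G_{\d_i+\d_j+\eta}(s-\l_i-\l_j)$ over only \emph{half} of the index pairs, whereas the full product $\prod_{1\le i<j\le 2n}$ is what equals the ratio $L_\infty(s)/L_\infty(1-s,\tilde\pi)$ (proposition~\ref{samefe}). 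The quotient $L_\infty(s)/{\mathcal G}(s)$ is a genuine ratio of Gamma functions with its own poles and zeros; it is not nonvanishing, and you cannot simply divide by it.

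The actual bridge from the holomorphy of ${\mathcal G}(s)L^T(s,\pi,Ext^2\otimes\chi)$ to that of $L_\infty(s)L^T(s,\pi,Ext^2\otimes\chi)$ is the content of \secref{sec:fullholomorphy}, and requires two further ingredients your outline omits. First, one must choose the Casselman embedding of $\pi_\infty'$ judiciously (proposition~\ref{niceembedding}) so that, for that particular $(\l,\d)$, the ratio $L_\infty(s)/{\mathcal G}(s)$ is holomorphic in the strip $1/2\le\Re s\le 1$ (proposition~\ref{fullproveprop}); this is a case-by-case matching of the possible poles of $L_\infty$ in (\ref{fivelpieces}) against poles of the six subproducts ${\mathcal G}_1,\ldots,{\mathcal G}_6$, together with a check that none of the ${\mathcal G}_k$ vanish there. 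Second, to cover $\Re s<1/2$ one invokes the functional equation (\ref{adelicpenufe}) of the pairing to transport the problem to $\Re s\ge 1/2$ for the contragredient. Without these two steps the argument does not close, even in the full-level case of the Corollary.
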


Our method also gives a functional equation, which we give a complete account  of in this paper for full level
forms. We did not state this as part of the theorems here because it was previously known
 as part of a general result of Shahidi \cite{shahidi} on
the Langlands-Shahidi method. Shahidi furthermore proved that $\Lambda(s,\pi,Ext^2\otimes\chi)$ has a meromorphic
continuation to $\C$ with at most a finite number of   simple   poles, for any automorphic cusp form $\pi$ on
$GL(n)$. Furthermore, several cases of our theorem were known by earlier results, some of which are stronger
because they treat number fields and ramified nonarchimedean cases as well (where our technique works in what is so
far only a limited number of cases). Kim \cites{kimgl4,kimextsq} proved that $\Lambda(s,\pi,Ext^2)$ is fully
holomorphic   with at most simple poles   if either $n$ is odd, or if $n$ is even and $\pi$ is not twist-equivalent
to $\tilde\pi$. The Rankin-Selberg method also has provided earlier partial results.  Jacquet-Shalika
\cite{jsextsq} and Bump-Friedberg \cite{bumpfriedberg} found different integral representations for
$\Lambda(s,\pi,Ext^2\otimes\chi)$, which for example have been used to characterize when it  has poles at $s=0$ and
1.  It is now known that $\Lambda(s,\pi,Ext^2)$ has these poles if and only if $\pi$ is a functorial lift of a
generic   cuspidal automorphic representation of $SO(2n+1)$ \cite{jiang}*{Theorem 2.2(3)}.
 Stade \cite{stade} completed the archimedean theory of the Bump-Friedberg integral
 representation in the case that $\pi_\infty$ is a
 spherical principal series representation.  This
 provides the full holomorphy   with at most simple poles, and also the   functional equation of
$\Lambda(s,\pi,Ext^2)$ when $\pi_p$ is spherical for all $p\le \infty$.

Because Kim's result \cite{kimextsq} covers our result when $n$ is odd, we have chosen to restrict the content of
this paper to the case where $n$ is  even; however, it is not difficult to extend our method to cover all $n$.  Our
results extend from $\Q$ to a general number field; we hope to return to this as well as to adapt our archimedean
methods to the nonarchimedean setting in a future paper.

Our technique involves pairings of automorphic distributions of cusp forms and Eisenstein series, which are the
topic of section~\ref{sec:automorphicdistributions}. It could be thought of as a completion of the Rankin-Selberg
method, because it is heavily influenced by the Jacquet-Shalika integral representation \cite{jsextsq}.  One of the
key differences at this stage is that the analytic continuation employs a  mechanism different from the one  used
for integral representations.  The unfolding computation for our pairing is presented in section~\ref{unfolding}.
This identifies the pairing with an Euler product for $L(s,\pi,Ext^2\otimes\chi)$ times our archimedean integral,
which is a pairing of ``Whittaker distributions''.   This latter integral is a key difference between our method
and the Rankin-Selberg method.  While the archimedean Rankin-Selberg integrals are notoriously difficult to
compute,   this integral is explicitly computed using a matrix decomposition in sections~\ref{slicingsec} and
\ref{sec:localintegrals}.  The matrix decomposition is thus in some sense the crux of the paper.   This gives the
full holomorphy  with at most simple poles   of $L(s,\pi,Ext^2\otimes\chi)$ times an explicit ratio of Gamma
factors, which is not equal to $L_\infty(s,\pi,Ext^2\otimes\chi)$.  The relation between that factor and
$L_\infty(s,\pi,Ext^2\otimes\chi)$ is calculated in section~\ref{sec:functionalequation}, and the full holomorphy
proved in section~\ref{sec:fullholomorphy}.

 The full details of our methodology were illustrated for $GL(4)$ in our earlier
 paper \cite{korea}.  That case avoids many  difficulties and computations that are present in this paper.

It is a pleasure to acknowledge Bill Casselman, James Cogdell, David Ginzburg, Roe Goodman, Herve Jacquet, Henry
Kim, Erez Lapid, Ilya Piatetski-Shapiro, Ze'ev Rudnick,  Peter Sarnak, Freydoon Shahidi, David Soudry, Akshay
Venkatesh, and Nolan Wallach for their helpful advice and discussions.

\section{Automorphic Distributions}\label{sec:automorphicdistributions}

 In this section we summarize the
notion of automorphic distribution, but only to the extent needed for our purposes. Further details can be found in
\cite{mirabolic}*{\S 2-\S 5}.

\subsection{Cuspidal Automorphic Distributions for $GL(n)$}
\label{autdistsec}

Let $G$ equal the algebraic group $GL(n)$, $B_{-}=\{$nonsingular lower triangular matrices$\}$ a maximal solvable
subgroup, $N=\{$unit upper triangular matrices$\}$ a maximal unipotent subgroup, and  $Z=\{$non-zero scalar
multiples of the identity$\}$  its center. The flag variety of $G(\R)$,
\begin{equation}
\label{ps3} X\ = \ G(\R)/B_{-}(\R)\,,
\end{equation}
is compact, and its open dense $N(\R)$-orbit through the base point $eB_{-}(\R)\subset X$ is its open Schubert
cell, which can be identified with $N(\R)$:
\begin{equation}
\label{ps4} N(\R) \ \ \simeq \ \ N(\R)\cdot eB_{-}(\R)\ \ \hookrightarrow \ \ X\,.
\end{equation}

 For any  $\l \in \C^n$ and $\d \in (\Z/2\Z)^n$,  define the character $\chi_{\l,\d} : B_{-}(\R)  \longrightarrow  \C^*$ by the formula
\begin{equation}
\label{ps5}
\chi_{\l,\d} \left(\begin{smallmatrix}
a_1 & 0 & {\textstyle\dots} & 0 \\
{ }_{\scriptstyle *} & a_2 & {\textstyle\dots}   & 0\\
\vdots & \vdots &  \ddots    & \vdots\\
{ }_{\scriptstyle *} & { }_{\scriptstyle *} &{\textstyle\dots} &
a_n
\end{smallmatrix}\right)\ \ = \ \ {\prod}_{j=1}^n  \sgn (a_j)^{\d_j} |a_j|^{\l_j}\,.
\end{equation}
The principal series $V_{\l,\d}$ is the representation induced from   $\chi_{\l-\rho,\d}$ from $B_{-}(\R)$ to
$G(\R)$, where
\begin{equation}
\label{ps6} \rho\ \ = \ \ \left(\textstyle
\frac{n-1}2,\,\textstyle \frac{n-3}2,\, \dots,\, \textstyle
\frac{1-n}2 \right)\,.
\end{equation}
In particular, its space of smooth vectors consists of smooth sections of a line bundle ${\mathcal
L}_{\l-\rho,\d}\to X$,
\begin{equation}
\label{ps7}
\gathered
 V_{\l,\d}^\infty \ = \ C^\infty(X, \L_{\l-\rho,\d}) \ \ \simeq\qquad
\qquad\qquad\qquad\qquad\qquad\qquad
\qquad\qquad \qquad   \\ \{f \in  C^\infty(G(\R))
\mid f(gb)=\chi_{\l-\rho,\d}(b^{-1})f(g)\ \, \text{for}\ \, g\in G(\R),\,
b\in B_{-}(\R)\}.
\endgathered
\end{equation}
The principal series $V_{\l,\d}$ is not necessarily unitary, but is naturally dual to the principal series
$V_{-\l,\d}$ via integration over the flag variety $X$.
 Analogously, the space of distribution vectors for $V_{\l,\d}$ consists of distribution sections of the same line bundle:
\begin{equation}
\label{ps9}
\gathered
V_{\l,\d}^{-\infty} \  = \ C^{-\infty}(X, \L_{\l-\rho,\d}) \ \simeq \qquad
\qquad\qquad\qquad\qquad\qquad\qquad \qquad\qquad \ \ \ \
\\
 \{\sigma \in  C^{-\infty}(G(\R)) \mid
\sigma(gb)=\chi_{\l-\rho,\d}(b^{-1})\sigma(g)\ \text{for}\ g \in  G(\R),
b \in  B_{-}(\R)\}.
\endgathered
\end{equation}
The restriction of the equivariant line bundle $\L_{\l-\rho,\d}\to X$ to the open Schubert cell (\ref{ps4}) is
canonically trivial, because $N(\R)\,\cap\, B_{-}(\R)=\{e\}$.  Its distribution\footnote{``Distribution'' for us
refers to ``generalized functions'', so that scalar valued distributions are dual to smooth measures, and include,
for example, continuous functions.} sections therefore become scalar, resulting in the identification
\begin{equation}
\label{ps10} C^{-\infty}(N(\R), \L_{\l-\rho,\d}) \ \simeq \ C^{-\infty}(N(\R))\,,
\end{equation}
which is $N(\R)$-invariant, of course.

Let $\A=\A_\Q=\R\times \A_f$ denote the adeles of $\Q$, where $\A_f$ denotes the finite adeles (i.e., the
restricted direct product of all $\Q_p$, $p<\infty$).  We will use the notation $a=\prod_{p\le \infty}a_p$ to
represent the respective components of an adele, and analogously by extension, use similar subscript notation for
adelic points in algebraic groups defined over $\Z$. By the usual convention,   $\Q$ is identified with its
diagonally embedded image in $\A$, as are the rational points of any algebraic group defined over $\Z$ considered
diagonally embedded in its adelic points. Suppose that $\pi=\otimes_{p\le \infty} \pi_p$ is a cuspidal automorphic
representation of $G(\A)$ with central character $\omega:\A^*\rightarrow \C^*$.  By tensoring with an appropriate
power of the determinant, we may assume that $\omega$ has finite order. By definition the  archimedean
representation $\pi_\infty$ occurs automorphically in  $L^2_{\omega_\infty}(\Gamma\backslash G(\R))$, where
$\omega_\infty$ is the archimedean component of $\omega$ and $\G$ is some discrete subgroup of $G(\R)$.  This
embedding of $\pi_\infty$ into $L^2_{\omega_\infty}(\Gamma\backslash G(\R))$ maps smooth vectors to smooth
functions on $G$, continuously with respect to the intrinsic topology on the space of smooth vectors for
$\pi_\infty$ and the $C^\infty$ topology on $C^\infty(\G\backslash G)$. Evaluation of smooth $\G$-invariant
functions at the identity is continuous with respect to the $C^\infty$ topology, of course. In this way, the
embedding of $\pi_\infty$ into $L^2_{\omega_\infty}(\Gamma\backslash G(\R))$ determines an automorphic distribution
-- i.e., a $\G$-invariant continuous linear functional $\tau$ on the space of smooth vectors for $\pi_\infty$\,;\,\
to any smooth vector $v$, $\tau$ associates the value at the identity of the smooth $\G$-invariant function that
corresponds to $v$. The automorphic distribution $\tau$ completely determines the embedding of $\pi_\infty$ into
$L^2_{\omega_\infty}(\Gamma\backslash G(\R))$ \cites{voronoi,korea}. Also, as we shall explain shortly, the finite
adeles act on the automorphic distribution $\tau$. In effect, $\tau$ completely encodes the original automorphic
representation $\pi$.

We should alert the reader to the use of the symbol $\infty$ in two very different senses: as subscript, $\infty$
refers to the archimedean place, and as superscript to the degree of differentiability of functions and of vectors
in representation spaces. Both conventions are completely standard; deviating from the customary notation might be
more confusing than the dual meaning of $\infty$ as subscript and superscript.

As a continuous linear functional on the smooth vectors for $\pi_\infty$, the automorphic distribution $\tau$
should be regarded as a distribution vector for the dual representation $\pi_\infty'$.  Theorems of
Casselman \cite{Casselman:1980} and Casselman-Wallach \cites{Casselman:1989,Wallach:1983} imply that any
representation of $G(\R)$, in particular the Hilbert dual $\pi_\infty'$ of $\pi_\infty$, can be embedded into some
principal series representation.  Analogously, the space of distribution vectors\begin{footnote}{The space
of distribution vectors carries a natural topology; see \cite{rapiddecay}, for example}\end{footnote} for
$\pi_\infty'$ can then be realized as a closed subspace of the space of distribution vectors for the dual principal
series representation:
\begin{equation}
\label{autodist6}
(\pi_\infty')^{-\infty} \ \hookrightarrow\
V_{\l,\d}^{-\infty} \ \ \ \ \text{for some} \ \ \l\,\in\,\C^n\ \text{and} \ \d\,\in\,(\Z/2\Z)^n\,;
\end{equation}
  for details, see \cite{rapiddecay}. In general this embedding is not unique. When it is not, the
particular choice of embedding parameters $(\lambda,\delta)$ will not matter for most of the discussion. However,
in order to eliminate unwanted poles of the $L$-function associated to $\pi$ -- or equivalently, to $\tau$ -- we
shall eventually play off the various possible choices against each other.

The realization (\ref{autodist6}) allows us to view $\tau$ as an element of $C^{-\infty}(X,
\L_{\l-\rho,\d})^\G$. The automorphic distribution $\tau$ can be further realized as distributions on
$N(\R)$ by (\ref{ps10}):~even though distributions are not normally determined by restrictions to dense open sets,
this is the case for automorphic distributions because the $\G$-translates of $N(\R)$ cover all of $X$. Details of
this procedure can be found, for example, in \cite{mirabolic}*{\S 2}.

The adelic automorphic representation $\pi$ accounts for many simultaneous realizations of $\pi_\infty$,
corresponding to the restrictions to $G(\R)$ of functions on $G(\Q)\backslash G(\A)$ which correspond to pure
tensors for $\pi=\otimes_{p\le \infty}\pi_p$.
  We now indicate how to adelize the automorphic distribution of the previous paragraph, summarizing from \cite{mirabolic}*{\S5}.
 Each right translate of these functions by a fixed element of $G(\A_f)$ gives rise to an automorphic distribution, too; it is automorphic under a conjugate of $\G$.  This gives a map from $G(\A_f)$ to automorphic distributions on $G(\R)$, which we write as $\tau(g_\infty\times g_f)$.  We stress that this notation signifies that  for each fixed parameter $g_f$,  $\tau(g_\infty\times g_f)$ is an automorphic distribution in the $g_\infty\in G(\R)$ variable.  It satisfies the property that   for any $\g=\g_\infty\times \g_f\in G(\Q)$ (regarded as diagonally embedded in $G(\A)$,
the distributions $\tau(\g_\infty g_\infty \times \g_f g_f)$ and $\tau(g_\infty\times g_f)$ have equal integrals
against any smooth function of compact support in $g_\infty\in G(\R)$.  Since it is  invariant under multiplication
on the left by any element of $G(\Q)$,  we shall call the resulting object $\tau$ an {\em adelic
automorphic distribution} on $G(\A)$.

The remainder of this subsection concerns the  Fourier expansions of automorphic distributions.  Let $\psi_{+}$ be
the unique  additive character of $\A$ which is trivial on $\Q$ and  whose restriction to $\R$ is
$\psi_{+}(a_\infty)= e^{2\pi i a_\infty}$.  The composition $\psi_{+}\circ c$, where
\begin{equation}\label{ccharacter}
    c \ : \ (n_{ij}) \ \ \mapsto
    \ \ n_{1,2} \,+\, n_{2,3} \,+ \, \cdots
\end{equation}
is the sum of the entries just above the diagonal of a matrix, gives a nondegenerate character of $N(\Q)\backslash
N(\A)$.
 Recall the global Whittaker integrals for an automorphic representation $\pi$:
\begin{equation}\label{classwhit}
    W_{\phi}(g) \ \ = \ \ \int_{N(\Q)\backslash N(\A)} \phi(ng) \, \psi_{+}(c(n))\i\,dn\ , \ \ \ \, \phi \,\in\,\pi\,,
\end{equation}
where the  Haar measure $dn$ on $N(\A)$  is  normalized to give the (compact) quotient $N(\Q)\backslash N(\A)$
volume 1. The adelic Whittaker distribution is the analogous integral for $\tau$:
\begin{equation}\label{whittonta1}
    w(g) \ \ = \ \    w_\tau(g)   \ \ =  \ \ \int_{N(\Q)\backslash N(\A)} \tau(ng) \, \psi_{+}(c(n))\i\,dn\, ,
\end{equation}
or in terms of the left translation operator $\, \ell(n):\tau(g)\mapsto \tau(n\i g)$,
\begin{equation}\label{whittonta2}
    w \ \ = \ \   w_\tau  \ \ = \ \  \int_{ N(\A)/N(\Q)} \ell(n)  \tau \  \psi_{+}(c(n))\,dn\, .
\end{equation}
This integration is shown in \cite{mirabolic}*{\S5}  to define a   function of $g_f\in G(\A_f)$ with values in
$C^{-\infty}(G(\R))$.

The adelic automorphic distribution can be reconstructed as a sum of left translates of $w$ by the formula
\begin{equation}\label{tauareconstruct}
    \tau(g) \ \ = \ \ \sum_{\g\,\in\,N_{n-1}(\Q)\backslash GL(n-1,\Q)}w\(\ttwo{\g}{}{}{1} g\),
\end{equation}%
where $N_{n-1}$ is the subgroup of unit upper triangular matrices in $GL(n-1)$ (\cite{mirabolic}*{(5.14)}).
It is shown in \cite{mirabolic}*{Proposition~5.13} that there exists a realization of $\pi_\infty$ within the
automorphic representation $\pi$, and a corresponding adelic automorphic distribution $\tau$, such that the
integral (\ref{whittonta1})  factorizes as
\begin{equation}\label{factorizepure2}
      w(g) \ \ = \ \ w_\infty(g_\infty)\prod_{p< \infty}W_p(g_p) \, ,
\end{equation}
where the $W_p$ are taken to be arbitrary elements of the Whittaker model ${\mathcal W}_p$ of $\pi_p$ for finitely
many primes $p$, and the standard spherical Whittaker function (i.e., whose restriction to $G(\Z_p)$ is identically
1) at all other primes. The distribution $w_\infty\in V_{\l,\d}^{-\infty}$  satisfies the transformation property
$w_{\infty}(ng)=\psi_{+}(c(n))w_{\infty}(g)$ for all $n\in N(\R)$. It is a scalar multiple of a distribution
$w_{\l,\d}$ whose  restriction    to the open cell (\ref{ps4}) is the continuous function defined by the formula
\begin{equation}\label{btransform}
\gathered
    w_{\l,\d}\(\(\begin{smallmatrix}
      1 & x_{1} & \star & \star  & \star          \\
      & 1     & x_{2}& \star & \star \\
       & & \ddots  & \star & \star \\
       & & & 1 & x_{n-1} \\
       & & & & 1
    \end{smallmatrix}\)\(\begin{smallmatrix}
      a_1 &  & & &          \\
      \star & a_2      &  & &  \\
      \star  & \star & \ddots  & & \\
        \star &  \star &  \star & a_{n-1} & \\
       \star &  \star&  \star&  \star& a_n
    \end{smallmatrix}\)\) \qquad\qquad\qquad\qquad\qquad\qquad
    \\ \qquad\qquad\qquad\qquad = \ \ e(x_1+\cdots +
    x_{n-1})\,\prod_{j=1}^n
    |a_j|^{(n+1)/2-j-\l_j}\sgn(a_j)^{\d_j}\,,
\endgathered
\end{equation}
where
\begin{equation}
\label{ab7} e(z)\ =_{\text{def}}\ e^{2\pi i z}\,;
\end{equation}
that scalar can be normalized out, or shifted to one of the $W_p$, allowing us to assume $w_\infty=w_{\l,\d}$ for
the rest of the paper. Formula (\ref{factorizepure2}) is thus analogous to the classical Whittaker integral
(\ref{classwhit}) of a pure tensor $\phi$; the only difference is that the archimedean Whittaker function is
replaced by $w_{\l,\d}$.

The dual of the automorphic representation $\pi$ has an automorphic distribution that can be  constructed  directly
from $\tau$ using the contragredient map
\begin{equation}
\label{ab9} g\ \mapsto \ \widetilde g\,,\ \ \widetilde g\, = \,
w_{\text{long}}(g^t)^{-1}w_{\text{long}}^{-1}\,, \ \ \text{with}\
\ w_{\text{long}}\, = \, \left(
\begin{smallmatrix}
 & & & & 1 \\
 & & & \cdot & \\
 & & \cdot & & \\
 & \cdot &  & & \\
 1 & & & &
\end{smallmatrix}
\right),
\end{equation}
which defines an outer automorphism of $G= GL(n)$ that preserves the subgroups $GL(n,\Z_p)$, $B_{-}$, and $N$. The
contragredient adelic automorphic distribution is defined by
\begin{equation}
\label{ab10} \widetilde\tau (g)\ =_{\text{def}} \tau\(\widetilde
g\) \,,
\end{equation} and defines a map from $G(\A_f)$ to automorphic distributions in $V_{\widetilde\l,\widetilde\d}^{-\infty}$ with principal series parameters
\begin{equation}
\label{ab12} \widetilde \l\ = \ (-\l_{n}, -\l_{n-1}, \dots ,
-\l_1)\,, \ \ \ \widetilde \d\ = \ (\d_{n}, \d_{n-1}, \dots ,
\d_1)\,.
\end{equation}

\subsection{Mirabolic Eisenstein distributions and pairings}
\label{eisenstein}

The pairing we use to obtain the exterior square $L$-function on $GL(2n)$ involves automorphic distributions for
Eisenstein series induced from a one dimensional representation of   the so-called {\it mirabolic subgroup} $P'$,
the standard upper triangular $(n-1,1)$ parabolic subgroup of $G=GL(n)$. The mirabolic Eisenstein series originally
appear in papers of Jacquet-Shalika \cite{jseuler,jsextsq}, and in particular are prominent ingredients in the
integral representations \cite{bumpfriedberg,jsextsq} of the exterior square $L$-functions, the latter of which
served as an inspiration for the distributional pairing used in this paper.  In this subsection we summarize the
pertinent  properties of the mirabolic Eisenstein distributions from \cite[\S3\,and\,\S5]{mirabolic}.

Let $V$ be $n$-dimensional vector space  with standard basis $\{e_1,\ldots,e_n\}$, viewed as an algebraic group of
row vectors defined over $\Z$.   Let $\Phi_\infty$ denote the $\d$-function at any nonzero point in $V(\R)$, and
let $\Phi_p$ be a Schwartz-Bruhat function on $V(\Q_p)$ for $p<\infty$ -- that is, a locally constant function of
compact support.  The latter is  {\em unramified} when it is  the characteristic function of $V(\Z_p)$.

Suppose now that $\Phi(g)=\prod_{p\le \infty}\Phi_p(g_p)$ is a product of such $\Phi_p$ which are unramified for
all but finitely many $p$, and that $\omega$ and $\chi$ are finite order characters of $\Q^*\backslash \A^*$
(specifically, we shall let $\omega$ be the central character of $\pi$, and $\chi$ the adelization of a Dirichlet
character of parity $\eta\in \Z/2\Z$). The integral
\begin{equation}\label{IfromPhi}
        I(g,s) \ \ = \ \ \chi(\det g)\i\, |\det g|^s \,\int_{\A^*}\Phi(e_n tg)\,|t|^{ns}\,\chi^{-n}(t)\,\omega(t)\,d^*t
\end{equation}
is shown in \cite[\S5]{mirabolic} to define  a distribution in $g_\infty\in GL(n,\R)$ for any fixed value of
$g_f\in GL(n,\A_f)$, and the periodization
\begin{equation}\label{adeliceisnew}
    E(g,s) \ \ = \  \ \sum_{\g\in P'(\Q)\backslash G(\Q)} I(\g g,s)\,,
\end{equation}  convergent in the strong distributional topology for $\Re{s}>1$,  defines an  adelic automorphic Eisenstein distribution:~a map from $G(\A_f)$ to automorphic distributions on $G(\R)$.   It has a meromorphic continuation to $s\in \C-\{1\}$ with at most a simple pole at $s=1$.  These statements remain valid when $\Phi$ is  replaced by a finite linear combination of such products.
 In this paper, we always keep $\Phi_\infty$ equal to $\d_{e_1}$, the $\d$-function at $e_1\in V(\R)$.  The central character of $E$ is $\omega\i$,  inverse to the central character $\omega$ of the cuspidal automorphic distribution $\tau$ from the previous subsection.    Since $\Phi$ factors as $\Phi_\infty \times \Phi_f$, with $\Phi_f$ a Schwartz-Bruhat function on $V(\A_f)$, the integral (\ref{IfromPhi}) also factorizes as a product $I(g_\infty\times g_f,s)=I_\infty(g_\infty,s)I_f(g_f,s)$, where $I_\infty(g_\infty,s)$ represents the integral restricted to $\R^*$.

The restriction  of $E(g,s)$ to $G(\R)$ is a distribution vector for a certain degenerate principal series which we
now define. Let $P_{-}=w_{\text{long}}P'w_{\text{long}}$. The quotient
\begin{equation}
\label{mira2} Y\ \ = \ \ G(\R)/P_{-}(\R)
\end{equation}
can be naturally identified with the projective space of hyperplanes
 in $\R^n$, and is called a {\it generalized flag variety}, in analogy to (\ref{ps3}).  Let
\begin{equation}\label{Uradical}
   U \ \ = \ \  \left\{  \left(
\begin{smallmatrix}1& {\textstyle{*}} & \textstyle{\dots}  & {\textstyle{*}} \\
  & { }_{\scriptstyle{1}} & \ \  { }_{{ }_{\textstyle{0}}} &  \\
{ }^{\textstyle{0}} & & \ \ \ddots &  \\
 &  &  & 1
 \end{smallmatrix}
\right)  \right\}
\end{equation}
be the ``opposite'' of the unipotent radical of $P_{-}$. In analogy to (\ref{ps4}), we can identify its real points
 with the open Schubert cell in $Y$,
\begin{equation}
\label{mira3} U(\R)\ \simeq \ U(\R)\cdot eP_{-}(\R)\ \hookrightarrow \ Y\,,
\end{equation}
because $U$ and $P_{-}$ intersect trivially.

Any character of $P_{-}(\R)$ that is trivial on the identity component of   the center $Z(\R)$ of $G(\R)$ has the
form
\begin{equation}
\label{mira4}
\chi_{s, \e,\eta} \left(
\begin{smallmatrix}
\textstyle{a} & 0 &\textstyle{\dots} & 0 \\
{ }_{\textstyle{*}} &    &  & \\
\vdots & & { }^{\textstyle{B}} &   \\
\textstyle{*} &  & &
\end{smallmatrix}\right)\ = \ \,  |a|^{(n-1)(s-1)}  \, \sgn (a )^{\e+\eta} \, |\det B|^{1-s}\, \sgn(\det B)^\eta\,,
\end{equation}
for some $s\in \C$  and $\e, \eta \in \Z/2\Z$. It uniquely defines a $G$-equivariant $C^\infty$ line bundle
$\L_{s,\e,\eta } \to Y$  on whose fiber  at $eP_{-}(\R)$ the isotropy group $P_{-}(\R)$ acts by $\chi_{s,\e,\eta
}$.  Left translation by the group $G(\R)$ exhibits
\begin{equation}
\label{mira6}
\gathered
W_{s, \e,\eta  }^\infty \  = \ C^\infty(Y,\L_{s,  \e,\eta  })  \ \simeq
\qquad\qquad\qquad\qquad\qquad\qquad\qquad \qquad\qquad \ \ \ \  \
\\ \ \
 \{f \in  C^\infty(G(\R)) \mid
f(gp)=\chi_{s, \e,\eta  }(p^{-1})f(g)\ \text{for}\ g \in  G(\R),
p \in P_{-}(\R)\}
\endgathered
\end{equation}  as
the space of smooth vectors for a degenerate principal series representation $W_{s,\e,\eta}$.  Analogously to
(\ref{ps7}-\ref{ps9}), its space of distribution vectors  $W_{s, \e,\eta  }^{-\infty}$ is the space of distribution
sections of the same line bundle; that is, (\ref{mira6}) remains valid if  all three superscripts are changed to
$-\infty$. In particular, distributions $f\in W_{s,  \e,\eta  }^{-\infty}$   transform on the right as follows:
\begin{equation}
\label{mira6explicated}
    f\(g\ttwo{a}{}{\star}{B}\) \ = \
    |a|^{n(1-s)}\,  \sgn(a)^{\e+\eta}\,\sgn(\det B)^\eta  f(g)  \ \ \ \text{if} \ \ |a||\det B|=1\,.
\end{equation}
When $\Phi_\infty=\d_{e_1}$, a change of variables in (\ref{IfromPhi}) shows that $I_\infty(g,s)$ obeys the same
transformation law, making the adelic Eisenstein distribution $E(g,s)$ a map from $G(\A_f)$ to automorphic
distributions in $W_{s,\e,\eta}^{-\infty}$.

\subsection{Pairing of Automorphic Distributions}
\label{pairingsec}

In this subsection we describe our main analytic tool:~the pairing of a cuspidal automorphic distribution against a
mirabolic Eisenstein distribution. The result is a meromorphic function of the variable $s$ that parameterizes the
Eisenstein series. In the following sections we shall identify this function with the exterior square $L$-function,
multiplied by a product of functions $G_\d$ that are defined below in (\ref{eisen3}).  We shall describe the
pairing only  to the extent needed for our present purposes.  For the proof, and a generalization to arbitrary Lie
groups, we refer the reader to \cite{pairingpaper}.

We use subscripts to distinguish the different groups and flag varieties involved in the pairing:~for example,
$G_k$  denotes $GL(k)$, and $X_k$, $Y_k$  its flag varieties from  (\ref{ps3}) and (\ref{mira2}), respectively.
The partition  $2n=n+n$  induces embeddings
\begin{equation}
\label{pair1} G_n \times G_n \ \hookrightarrow \ G_{2n}\,,\qquad
X_n \times X_n \ \hookrightarrow \ X_{2n}\,.
\end{equation}
The translates of the latter  under the real points of the abelian subgroup
\begin{equation}
\label{pair2} U_{n,n} \  \ = \ \ \left\{  \begin{pmatrix}
\textstyle{I_{n}}  & \star \\ \textstyle{0_{n}}  &
\textstyle{I_{n}}  \end{pmatrix}\right\} \ \ \subset\ \   \ G_{2n}
\end{equation}
are disjoint, giving the embedding
\begin{equation}
\label{pair3} U_{n,n}(\R) \times X_n \times X_n \ \hookrightarrow \
X_{2n}
 \end{equation}
a dense open image.

We define the character
  \begin{equation}\label{pair4a}
   \theta : U_{n,n}(\A)\ \longrightarrow \ \C^*\,,\qquad
\theta\(\begin{smallmatrix} I_n & A \\ 0_n & I_n \end{smallmatrix}
\) = \psi_{+}(\tr A)\,,
\end{equation}
where again $\psi_{+}$ is the standard additive character on $\Q\backslash \A$ from section~\ref{autdistsec}. If
$f_1$ and $f_2 \in X_n$ are in general position, their isotropy subgroups are Borel subgroups whose intersection is
a Cartan  subgroup of $GL_n(\R)$.  As the latter acts with an open orbit on $Y_n\simeq \RP^{n-1}$, $G_n(\R)$ acts
with an open orbit on the triple product $X_n\times X_n\times Y_n$; in fact, the action provides a local
diffeomorphism between a neighborhood of the identity in $G_n(\R)/Z_n(\R)$ and a neighborhood of an arbitrary point
in the open orbit.

 Let $\phi\in C_c^\infty(G_n(\R))$ have total integral 1, and let $(f_1,f_2,f_3)\in X_n\times X_n \times Y_n$ be a point in the open orbit.
 With $E(s)$ as in (\ref{adeliceisnew}), the automorphic pairing is defined as
\begin{equation}\label{adelicpairing}
\gathered
    P(\tau,E(s)) \ \ =  \ \  \int_{Z_n(\A)G_n(\Q)\backslash G_n(\A)}F(g)\,dg\,, \ \ \ \text{where}
\\
  F(g) \ \ = \ \
    \int_{G_n(\R)}
    \left[ \int_{U_{n,n}(\Q)\backslash U_{n,n}(\A)}
    \tau\(u\!\ttwo{ghf_1}{}{}{ghf_2}\)\overline{\theta(u)}du   \right]
    E(ghf_3,s)\,\phi(h)\,dh\,.
\endgathered
\end{equation}
 Using the properties of the open orbit as well as decay estimates for $F(g)$,
this integral is shown to be well defined in \cite[\S4-5]{mirabolic}, though it is important that the integration
be done in the order specified in order to make sense.  The integrations defining $F(g)$ smooth the distribution to
give a function on $G_n(\A)$ which is  invariant under the center $Z_n(\A)$, because $\tau$ and $E$ have opposite
central characters $\omega$ and $\omega\i$, respectively.   This property of the finite order character $\omega$
means that
\begin{equation}
\label{pair8} \l_1 + \l_2 + \dots + \l_{2n}\ = \ 0\  \  \text{and} \, \ \ \ \d_1 +
\d_2 + \dots + \d_{2n}\ \equiv \ \e + n\,\eta \imod 2
\end{equation}
(cf.~(\ref{IfromPhi})). At full level $\omega$ is trivial,  and invariance under $-e\in G_{2n}(\Z)$ forces the
second sum to be zero. The overall invariance under the center means that the smoothing over $h\in GL(n,\R)$ really
drops to one over $SL^{\pm}_n(\R)$, the group of $n\times n$ real matrices with determinant $\pm 1$. The
$u$-integration appears in \cite{jsextsq} and is commonly known as a ``Shalika period''.  Indeed, the integral
(\ref{adelicpairing}) is very closely modeled on the integral representation in \cite{jsextsq}.  We require the
distributional analog because it is possible to  compute it explicitly as the exterior square $L$-function times a
ratio of Gamma factors, which is apparently not possible for the Jacquet-Shalika integral.  We should also
emphasize that even though there is a test function $\phi$ in the definition of the pairing, the value of the
pairing is independent of it:
 \smallskip

\begin{thm}[\cite{pairingpaper,mirabolic}]
\label{thmpairing} For every test function $\,\phi\in C_c^{\infty}(G_n(\R))$ the result of the inner two
integrations in (\ref{adelicpairing}) is a left $G_n(\Q)$- and $Z_n(\A)$-invariant function on $G_n(\A)$, whose
restriction to $G_n(\R)$ is smooth and left invariant under some congruence subgroup $\G'\subset GL(2n,\Z)$.  This
function is moreover integrable over $\G'\backslash G_n(\R)/Z_n(\R)$, and its integral over this quotient (which
equals $P(\tau,E(s))$  times the index of $\G'$) is holomorphic for $s\in \C-\{1\}$, with at most a simple pole at
$s=1$.  If  $\int_{G_n(\R)} \phi(g)\,dg=1$, as we have assumed, the integral does not depend on the choice of the
function $\phi$.
\end{thm}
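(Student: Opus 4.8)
The plan is to read the inner two integrations in (\ref{adelicpairing}) as a smoothing of the cuspidal distribution $\tau$, and then to play the rapid decay of smoothed cuspidal distributions off against the analytic behaviour of $E(g,s)$ that was recorded just after (\ref{adeliceisnew}). First I would make sense of the bracketed $u$-integral: for a fixed $g$ it pairs $\tau$ --- restricted, via the dense open embedding (\ref{pair3}), to $U_{n,n}(\R)\times X_n\times X_n\subset X_{2n}$ --- against the compact quotient $U_{n,n}(\Q)\backslash U_{n,n}(\A)$ weighted by the unitary character $\overline{\theta}$ of (\ref{pair4a}). Because the $U_{n,n}(\R)$-translates of $X_n\times X_n$ inside $X_{2n}$ are disjoint, this restriction is a genuine distribution on $U_{n,n}(\R)\times X_n\times X_n$, so the $u$-integral is well defined and, following \cite{mirabolic}, yields a $C^{-\infty}(G_n(\R))$-valued function of $g_f$ in the remaining variable. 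I would then carry out the $h$-integration against $\phi\in C_c^\infty(G_n(\R))$ and against $E(ghf_3,s)$, using the fact, noted above, that $G_n(\R)$ acts with an open orbit on $X_n\times X_n\times Y_n$: the orbit map $h\mapsto(ghf_1,ghf_2,ghf_3)$ is then a submersion onto an open set, so integrating the product distribution against the smooth compactly supported density $\phi(h)\,dh$ in the $h$-direction produces an honest smooth function $F$ on $G_n(\A)$. The detailed version of this step is carried out in \cite{mirabolic,pairingpaper}.

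Next I would establish the invariance properties. Left $G_n(\Q)$-invariance of $F$ comes from the automorphy of both $\tau$ and $E(s)$ together with the triviality of $\theta$ on $U_{n,n}(\Q)$, which is what permits the $u$-integral to be taken over the quotient $U_{n,n}(\Q)\backslash U_{n,n}(\A)$ in the first place; $Z_n(\A)$-invariance is immediate from the matched central characters $\omega$ and $\omega^{-1}$ of $\tau$ and $E$, the same matching recorded in (\ref{pair8}). Smoothness of $F|_{G_n(\R)}$ is the output of the previous step, and its left invariance under a congruence subgroup $\G'$ follows because $\tau$ and $E(s)$ are each fixed on the right by a compact open subgroup of $G_{2n}(\A_f)$ and $G_n(\A_f)$ respectively (determined by the ramification of $\pi$ and $\chi$ and by the choice of $\Phi_f$), while $\phi$ has compact support, so the standard adelic-to-classical dictionary identifies the invariance group as a congruence subgroup --- one that does not vary with $\phi$.

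The heart of the argument --- and the step I expect to be the main obstacle --- is integrability over $\G'\backslash G_n(\R)/Z_n(\R)$. The $u$-integrated, $\phi$-smoothed cuspidal part of $F$ is independent of $s$ and, by the cuspidality of $\tau$, decays rapidly as $g$ approaches the boundary of the quotient; concretely this should follow from the Whittaker-type reconstruction (\ref{tauareconstruct}) of $\tau$ as a sum of translates of its Whittaker distribution, together with the rapid-decay theory of \cite{rapiddecay} for \emph{smoothed} cuspidal distributions. The subtle point is precisely that $\tau$ is only a distribution, so its ``rapid decay'' is an abstract statement that turns into a pointwise estimate only after the $u$- and $h$-smoothings have been performed. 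The Eisenstein factor $E(ghf_3,s)$ has only moderate, polynomial-in-height growth, with a bound locally uniform in $s\in\C-\{1\}$ once the simple pole at $s=1$ has been divided out. A rapidly decaying function times one of moderate growth is integrable, and then Morera's theorem together with dominated convergence show that $s\mapsto\int_{\G'\backslash G_n(\R)/Z_n(\R)}F(g)\,dg$ is holomorphic on $\C-\{1\}$ with at most a simple pole at $s=1$; unfolding a fundamental domain identifies this integral with $P(\tau,E(s))$ times the index of $\G'$.

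Finally, for independence of $\phi$ I would argue as follows. Replacing $\phi$ by a left translate $L_{h_0}\phi:h\mapsto\phi(h_0^{-1}h)$ replaces $F(g)$ by $F(gh_0)$ without changing the invariance group $\G'$, and right translation by $h_0\in G_n(\R)$ is a measure-preserving transformation of $\G'\backslash G_n(\R)/Z_n(\R)$; hence the functional $\phi\mapsto\int_{\G'\backslash G_n(\R)/Z_n(\R)}F(g)\,dg$ --- which the estimates of the previous paragraph exhibit as a continuous linear functional on $C_c^\infty(G_n(\R))$ --- is invariant under left translation by $G_n(\R)$, and is therefore a constant multiple of Haar measure. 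It thus depends on $\phi$ only through $\int_{G_n(\R)}\phi(h)\,dh$, which we have normalized to $1$.
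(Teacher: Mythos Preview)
This theorem is not proved in the present paper --- it is quoted from \cite{pairingpaper,mirabolic}, and the surrounding text only alludes to the ingredients (the open-orbit property for well-definedness and smoothness of $F$, the decay estimates of \cite{rapiddecay} for integrability, and what the paper calls ``the de Rham argument'' of \cite[\S4]{pairingpaper} for independence of $\phi$).  Your sketch is correct and follows exactly that route; your independence argument --- showing that $\phi\mapsto\int_{\G'\backslash G_n(\R)/Z_n(\R)} F$ is invariant under left $G_n(\R)$-translation of $\phi$ and hence a scalar multiple of Haar measure --- is the integrated form of the infinitesimal de Rham argument (replacing $\phi$ by a Lie derivative $X\phi$ produces a right Lie derivative of $F$, which integrates to zero over the quotient), so the two are equivalent.
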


Let $\phi_\theta(u)$  equal the product of $\overline{\theta(u)}$ with an arbitrary function $\phi_U \in
C_c^\infty(U_{n,n}(\R))$ of total integral 1, and consider the integral
\begin{equation}\label{bigPhidef}
\aligned
   & \!\!\!\!\!\!\!  \!\!\!\!  \Phi(g_{2n},g_n) \ \ = \\ & \int_{G_n(\R)}\int_{U_{n,n}(\R)}    \tau\(g_{2n}u\!\ttwo{hf_1}{}{}{hf_2}\)\,E(g_nhf_3,s)\,\phi_\theta(u)\,\phi(h)\,du\,dh\,.
   \endaligned
\end{equation}
Both $\tau$ and $E$ are now together smoothed over a dense open subset of the full flag variety for
$G_{2n}(\R)\times G_n(\R)$, making $\Phi$ a smooth automorphic function on this group.  Because the smoothing on
the right commutes with left translation,
\begin{equation}\label{bigPhiuse}
    F(g) \ \ = \ \ \int_{U_{n,n}(\Q)\backslash U_{n,n}(\A)}\Phi\(u\ttwo{g}{}{}{g},g\)\,\overline{\theta(u)}\,du
\end{equation}
(\cite[Lemma~3.9]{pairingpaper}).
Thus the pairing $P(\tau,E(s))$ is the integral of a Shalika period of a smooth, automorphic function, over
$Z_n(\A)G_n(\Q)\backslash G_n(\A)$. Jacquet-Shalika's integral representation \cite{jsextsq} for the exterior
square involves integration over the same domains, but of a different type of function.

The value of the pairing depends on the choice of flag representatives $f_1$, $f_2$, and $f_3$. To be concrete, we
shall choose
\begin{equation}
\label{fixflag} f_1  \ =  \ I_n \,, \ \ \ f_2  \ = \ \(\begin{smallmatrix}  0 & \cdots & 0 & 1  \\  \vdots & &
\iddots &  \\ 0 & 1 & & \\ 1 & 0 & \cdots & 0 \end{smallmatrix}\) \,, \ \ \ \text{and}\ \ \ f_3 \ =\
\(\begin{smallmatrix}  1 &1 & \cdots & 1  \\  0 &  & &  \\ \vdots &  & \textstyle{I_{n-1}}  &  \\ 0  &  & &
\end{smallmatrix}\) \,,\end{equation} which in fact determine an open $G_n(\R)$ orbit as required.  The value of
the pairing is unchanged if the base points are simultaneously multiplied by an element of $G_n(\R)$ on the left,
and changes by a factor of automorphy coming from (\ref{ps9}) and (\ref{mira6}) if individually multiplied on the
right.  We use this in section~\ref{unfolding} to switch to base points which are more convenient for a computation
there.

The Eisenstein distributions, like Eisenstein series, have functional equations relating $s$ and $1-s$. The pairing
$P(\tau,E(s))$, too, inherits such a functional equation from them, as of course will the exterior square
$L$-functions that they will be shown to represent. We use this in particular to derive the functional equation of
the exterior square $L$-functions for full level forms; the weaker, general functional equation enters into the
proof of theorem~\ref{mainthmwiths}  to extend full holomorphy from  $\Re{s}\ge 1/2$ to all of $\C$. The statement
involves the functions
\begin{equation}
\label{eisen3} G_\d(s) \ \  = \ \     \int_\R \! e(x)
\left(\sg(x)\right)^\d |x|^{s-1}\,dx  \ \  =  \ \  \begin{cases}
2(2\pi)^{-s}\, \G(s)\, \cos\textstyle\frac{\pi s}{2} &\text{if}\ \d=0\\
2(2\pi)^{-s}\, \G(s)\, \sin\textstyle\frac{\pi s}{2} &\text{if}\
\d=1
\end{cases}
\end{equation}
which show up in  all known  functional equations of $L$-functions.  The integral is conditionally convergent for
$0 < \re s <1$, and meromorphically continues to $\C$ via the formula on the right, a formula that can be more
succinctly summarized as \begin{equation}\label{eisen3b}
    G_\d(s) \ \ = \ \ i^\d \,\f{\G_\R(s+\d)}{\G_\R(1-s+\d)} \ \ , \ \ \ \ \text{with} \ \,
    \G_\R(s) \,=\,\pi^{-s/2}\G(\smallf s2)\ \ \text{and} \ \ \d\,\in\,\{0,1\}
\end{equation}
using Gamma function identities.

The functional equation of the pairing is calculated in \cite[(5.28)]{mirabolic} to be
\begin{equation}\label{adelicpenufe}
\gathered
    P(\tau,E(1-s)) \ \ =  \qquad\qquad\qquad\qquad\qquad\qquad\qquad\qquad\qquad\qquad\qquad\qquad \\
\qquad N^{2ns-s-n}\,\prod_{j\,=\,1}^n G_{\d_{n+j}+\d_{n+1-j}+\eta}(s+\l_{n+j}+\l_{n+1-j})\,P(\tau',E'(s))\,,
\endgathered
\end{equation}
where $\tau'$ is a translate of  the contragredient automorphic distribution $\widetilde \tau$ defined in
(\rangeref{ab10}{ab12}), and $E'$ is a mirabolic Eisenstein distribution induced from a possibly different linear
combination of $\Phi$'s (each having $\Phi_\infty=\d_{e_1}$).  The translate is by an element of $GL(2n,\Q)$ which
is ramified only at  places that $\tau$ and $E(s)$ are. In the special case that $\tau$ is invariant under
$GL(2n,\Z)$, $N=1$, $\omega$ is trivial, and $\e\equiv \eta\equiv 0\imod 2$, the relation simplifies to
\begin{equation}\label{penufefulllevel}
\aligned
   &   \!\!\!\!\!  \!\!\!\!\!\!\!\!  P(\tau,E(1-s))  \  \  = \\  \ \  \ \ \ \  &(-1)^{\d_1+\cdots+
\d_{n}}\,{\prod}_{j\,=\,1}^n \,
G_{\d_{n+j}+\d_{n+1-j}}(s+\l_{n+j}+\l_{n+1-j})\,P(\widetilde{\tau},E(s))\,,
\endaligned
\end{equation}
in which the Eisenstein data on both sides corresponds to the unramified choice at all $p<\infty$, and  $\d_{e_1}$
for $p=\infty$ (see \cite[(4.26)]{mirabolic}).

\section{Exterior square unfolding on
$GL(2n,\R)$}\label{unfolding}

In this section we explain how the pairing of automorphic distributions ``unfolds'' into a product of local
integrals, one for each place $p\le \infty$ of $\Q$. There are two possible approaches to unfolding distributional
pairings  such as ours that are patterned from integral
 representations of $L$-functions.  The first, carried out for the exterior square $L$-functions
 on $GL(4)$ in \cite{korea}, works directly  with the Fourier expansion of the automorphic
distribution $\tau$, and proceeds through a chain of intermediate pairings analogous to (\ref{adelicpairing}).  The
second approach reduces the distributional unfolding statement to the corresponding unfolding of the classical
integral representation.
 For space  reasons we shall execute the latter, as it allows us to quote from \cite{jsextsq}.  It is additionally possible to execute a hybrid argument that uses the mechanics of the classical unfolding, but applied to the smoothed function (\ref{bigPhidef}).

Throughout this section we assume that $\Re{s}$ is arbitrarily large, an assumption that is always made when
identifying Dirichlet series, and which entails no loss of generality. The unfolding involves the ``card shuffle''
permutation matrix
    \begin{equation}\label{sigmapermdef}
    \sigma \ \ = \ \
    \(\begin{smallmatrix}
 1 &   &   &   &   &   &   &   &   &   &   &  \\
  &   &   &   &   &   &  1 &   &   &   &   &  \\
  &  1 &   &   &   &   &   &   &   &   &   &  \\
  &   &   &   &   &   &   &  1 &   &   &   &  \\
  &   &  1 &   &   &   &   &   &   &   &   &  \\
  &   &   &   &   &   &   &   &  1 &   &   &  \\
  &   &   &   &  \ddots &   &   &   &   &   &   &  \\
  &   &   &   &   &   &   &   &   &   &  \ddots &  \\
  &   &   &   &   &  1 &   &   &   &   &   &  \\
  &   &   &   &   &   &   &   &   &   &   & 1 \\
\end{smallmatrix}\)\  , \ \ \det \sigma = \left\{%
\begin{array}{ll}
    +1, & n \equiv 0,1 \imod 4 \\
    -1, &  n \equiv 2,3 \imod 4 \,,  \\
\end{array}%
\right.
\end{equation}
which sends the elementary basis vectors
\begin{equation}\label{sigmamap}
\begin{array}{llllllllll}
~~          & e_1, & e_2, & e_3, & \ldots, & e_n, & e_{n+1}, &
e_{n+2}, & \ldots, & e_{2n}  \\
\hbox{to} & e_{1}, & e_3, & e_5, & \ldots,& e_{2n-1}, & e_2, &
e_4, & \ldots, & e_{2n}\, ,
\end{array}
\end{equation}
respectively.  In other words $e_k$ is mapped to $e_{2k-1\!\pmod{2n-1}}$ and
\begin{equation}\label{conjbysigma}
\gathered
    \text{the $(i,j)$-th entry of $g$ equals the}\\ (2i-1,2j-1)\text{-th entry (mod $2n-1$) of $\sigma g \sigma\i$,}
     \endgathered
\end{equation}
for $1\le i,j,k \le 2n-1$. This permutation maps the positive Weyl chamber for the diagonal image of $GL(n)
\hookrightarrow GL(2n)$ into a positive Weyl chamber for the ambient group.  We  regard $\sigma\in GL(2n,\Q)$, the
diagonally-embedded rational subgroup of $GL(2n,\A)$ that the adelic automorphic distribution $\tau$ is  invariant
under.

Let us recall the adelic Whittaker distribution (\ref{factorizepure2}),  $\Phi$ from (\ref{IfromPhi}),   and the
subgroups $N$ and $Z$ of $G=GL(n)$ from section~\ref{autdistsec}. The unfolding of the distributional pairing
results in products of the nonarchimedean integrals  considered by Jacquet-Shalika in \cite{jsextsq},
\begin{equation}\label{unfoldingprop3padic}
\aligned
   &\!\!\!\!\!\!\! \!\!\!\!  \Psi_p(s,W_p,\Phi_p) \ \ =  \\ & \int_{N(\Q_p)\backslash G(\Q_p)} \int_{L_0(\Q_p)} W_p\(\sigma \ell \ttwo{g }{}{}{g }\) \,\Phi_p(e_ng)\,|\det g|^s\,\chi(\det g)\i\,d\ell \,dg
\endaligned
\end{equation}
for $p<\infty$, where $L_0$ is the subgroup of matrices in $U_{n,n}$ for which the top right $n\times n$ block in
(\ref{pair2}) is strictly lower triangular.  Jacquet-Shalika proved this integral is absolutely convergent for
$\Re{s}$ sufficiently large; moreover, so is the product of these integrals over all primes $p$.

Jacquet-Shalika's unfolding of their integral representation involves these nonarchimedean factors, as well as an
archimedean integral of the form (\ref{unfoldingprop3padic}), in which $\Phi_\infty$ is a Schwartz function on
$\R^n$.  Our strategy will be to show that our distributional pairing (\ref{adelicpairing}) is an instance of
Jacquet-Shalika's global integral from \cite[\S5]{jsextsq}, and to compute its unfolded factors. However, we first
will make some independent remarks about how the archimedean integral that arises can also be viewed as a {\em
local pairing of distributions}. Namely, consider the integral
\begin{equation}\label{unfoldingprop3realrevised} \int_{Z(\R) N(\R)\backslash G(\R)} \int_{G(\R)} \int_{L_0(\R)} w_{\l,\d}\(\sigma \ell \ttwo{ghf_1}{}{}{ghf_2}\)\,
I_\infty(ghf_3,  s   )\,
\phi(h)\,d\ell\,dh\,dg\,.
\end{equation}
This differs from the Jacquet-Shalika archimedean integral in that it involves the Whittaker distribution
$w_{\l,\d}$ from (\ref{btransform})  and a distribution $I_\infty$,  as opposed to smooth
functions.  It is similar to the global pairing (\ref{adelicpairing}) in that it first involves the
smoothing of a distribution by right convolution with a smooth function of compact support.  As such it can be
thought of as a local pairing between the Whittaker distribution $w_{\l,\d} \in V_{\l,\d}^{-\infty}$ and
$I_\infty$;  the latter is a distribution vector, specifically a $\d$-function, for the degenerate
principal series $W_{s,\e,\eta}$, as in (\ref{mira6}) and \cite[(5.22)]{mirabolic}.  In
(\ref{bigPhidef}-\ref{bigPhiuse}) we saw that the global pairing could alternatively be computed using an
additional smoothing   performed on $\tau$ by the function $\phi_\theta\in C_c^\infty(U_{n,n}(\R))$ because of
\cite[lemma~3.9]{pairingpaper}.  A similar argument shows that the inner integration in
(\ref{unfoldingprop3realrevised}) can be rewritten as
\begin{equation}\label{wldsmoothing}
    \gathered
   \int_{L_0(\R)} w_{\l,\d}\(\sigma\ell\ttwo{ghf_1}{}{}{ghf_2}\)\,d\ell\qquad \qquad\qquad\qquad\qquad\qquad\qquad\qquad\\
   \qquad\qquad = \ \   \int_{L_0(\R)}\int_{U_{n,n}(\R)} w_{\l,\d}\(\sigma\ell\ttwo{g}{}{}{g} u \ttwo{hf_1}{}{}{hf_2}\)\,\phi_\theta(u)\,du\,d\ell
   \,.
    \endgathered
\end{equation}
Indeed, after performing the change of variables $u\mapsto \ttwo{g}{}{}{g}\i u \ttwo{g}{}{}{g}$, $\phi_\theta(u)$
remains the product of the conjugation-invariant character $\overline{\theta(u)}$ with a smooth function of compact
support and total integral 1, while the measure $du$ is unchanged; this change of variables puts $u$
 immediately after $\ell$ in the argument of $w_{\l,\d}$. We can uniquely factor $u=\ell'n'=n'\ell'$,
where $\ell'\in L_0(\R)$ and $n'=\ttwo{I}{X}{}{I}$, with $X$ upper triangular.  The factor $\ell'$ can be removed
by reversing the order of integration and then changing variables in $\ell$, while the factor $n'$ can be removed
by using the fact that the Whittaker transformation character $\psi_{+}(c(\sigma n'\sigma\i))=e(\tr X)=\theta(u)$.
Hence the $U$-integration drops out, and the right hand side equals the left hand side.

We now insert (\ref{wldsmoothing}) into  (\ref{unfoldingprop3realrevised}). The extra smoothing over $U_{n,n}(\R)$
resmooths the $L_0$-integration, meaning that the latter can be passed outside of the $h\in G(\R)$ integration.  We
conclude that (\ref{unfoldingprop3realrevised}) is the integral over $Z(\R)N(\R)\backslash G(\R)$ and $L_0(\R)$ of
a function $W_{\text{prod}}(\sigma \ell\ttwo{g}{}{}{g},g)$, where $W_{\text{prod}}(g_{2n},g_n)\in
C^{\infty}(GL(2n,\R)\times GL(n,\R))$ is the right convolution of $w_{\l,\d}(g_{2n})I_\infty(g_n,  s  )$ against a
function in $C_c^\infty(GL(2n,\R)\times GL(n,\R))$.  Using the Dixmier-Malliavin factorization theorem
\cite{DixMal} and a Fourier transform argument analogous to the one in the proof of \cite[Lemma (8.3.3)]{jpss}, it
is possible to show the rapid decay and  absolute convergence of   this integration, and hence of the local pairing
(\ref{unfoldingprop3realrevised}).  The local pairing is independent of the choice of smoothing function $\phi$ (up
to normalization by its total integral), by modifying the de Rham argument that shows this independence for the
global pairing in theorem~\ref{thmpairing} (see \cite[\S4]{pairingpaper}). Thus one can independently make sense of
(\ref{unfoldingprop3realrevised}), and think of the global pairing as factoring as an $L$-function times it.  We
emphasize, however, that this is not the approach we take in the calculation below.

Our reduction to the unfolding of \cite{jsextsq} requires only a particular type of smoothing, for which the
integrability can be seen more directly.  (In particular we do not need to use the above remarks about the local
pairing.) We remarked in section~\ref{pairingsec} that the  diagonal action of $G(\R)$  on $X_n\times X_n\times
Y_n$ produces a local diffeomorphism between a neighborhood of the identity in $G(\R)/Z(\R)$, and a neighborhood of
the base point $(f_1,f_2,f_3)\in X_n\times X_n\times Y_n$.  Thus, when the representation parameters $\l$, $\d$,
$s$, $\e$, and $\eta$ are fixed, any product of three functions of small support around this base point, one for
each factor, necessarily lifts to a smooth function $\phi\in C_c^\infty(G(\R)/Z(\R))$. However, in view of its
construction in terms of $(f_1,f_2,f_3)$, smoothing by $\phi$ amounts to a separate smoothing of $\tau$~-- by the
product of  the smoothing functions on $X_n\times X_n$,   viewed as function on $GL(2n,\R)$~-- and of $E$ by the function on $Y_n$, viewed as function on
$G(\R)$.   The
  additional $U_{n,n}$-smoothing in
(\ref{bigPhidef}) also smooths $\tau$, but not $E$.  Taking into account (\ref{pair3}), this $U_{n,n}$-smoothing combines with the smoothing over $X_n\times X_n$ to together smooth $\tau$ over an open subset of $X_{2n}$.  If the supports of $\phi_\theta$ in (\ref{bigPhidef}) and of the smoothing functions on the two $X_n$ factors are sufficiently small, the smoothing over $X_{2n}$ takes place over $\ttwo{f_1}{}{}{f_2}N'(\R)B_{-}(\R)$, where $N'=N_{2n}$ is the subgroup of unit upper triangular matrices in $GL(2n)$. For simpler reasons, the smoothing of $E$ over $Y_n$ takes place over $f_3U(\R)P_{-}(\R)$, provided of course the smoothing function on the $Y_n$ factor has sufficiently small support (see (\ref{mira3})).  We have thus shown that $\Phi(g_{2n},g_n)$ splits as a product of separate smoothings of $\tau$ and $E$ over open subsets of their respective flag varieties, $X_{2n}$ and $Y_n$:
\begin{equation}\label{splitint}
   \Phi(g_{2n},g_n) \, = \,    \int_{N'(\R)}\int_{U(\R)} \tau \(g_{2n}\!\ttwo{f_1}{}{}{f_2}\!n'\)
E(g_n  f_3 u ,s)\,\phi'(n')\,\phi''(u)\,du\,dn',
\end{equation}
where both $\phi'\in
C_c^\infty(N'(\R))$  and  $\phi''\in C_c^\infty(U(\R))$ have  support concentrated near the identity.

In effect,   the right hand side of (\ref{splitint}) is a product of smooth automorphic forms, one of which is a cusp form in the representation space of $\pi$,  and the other  a mirabolic Eisenstein series of the type considered by Jacquet-Shalika (see the comments following
\cite[(5.24)]{mirabolic}).  In particular, for such a smoothing function $\phi$, the global pairing is an instance
of the Jacquet-Shalika integral.  Likewise, $W_{\text{prod}}(g_{2n},g_n)$ splits as a product of functions in each
variable:~the archimedean Whittaker function
\begin{equation}\label{rightconv3}
W_\infty(g_{2n}) \ \ = \ \ \int_{N'(\R)}w_{\l,\d}\(g_{2n}\ttwo{f_1}{}{}{f_2}n'\)\,\phi'(n')\,dn'
\end{equation}
 of that  cusp form, and
    \begin{equation}\label{rightconv2}
    I_{\text{JS},\infty}(g_n,s) \ \ = \ \ \int_{U(\R)}I_{\infty}(g_n  f_3 u ,s)\,\phi''(u)\,du\,,
\end{equation}
the archimedean component of the function that is periodized to form that Eisenstein series.  The global integral
hence splits as a product over all $p\le \infty$ of Jacquet-Shalika local integrals.  In this setting the
archimedean integral $\Psi_\infty(s,W_\infty,\Phi_{\text{JS},\infty})$ (i.e., the analog of
(\ref{unfoldingprop3padic}) for $p=\infty$) is equal to
\begin{equation}\label{rightconv4}
\aligned
& \!\!\!\! \!\!\!\! \Psi_\infty(s,w_{\l,\d}) \  \  := \ \ \int_{Z(\R)N(\R)\backslash G(\R)} \int_{L_0(\R)} W_{\infty}\(\sigma\ell\ttwo g{}{}g\)\,I_{\text{JS},\infty}(g,  s  )\,d\ell\,dg     \\
&   = \ \ \int_{N(\R)\backslash G(\R)} \int_{L_0(\R)} W_{\infty}\(\sigma\ell\ttwo g{}{}g\) \Phi_{\text{JS},\infty}(e_n g)\,|\det g|^s\,\sgn(\det g)^\eta\,d\ell\,dg\,,
\endaligned
\end{equation}
where
\begin{equation}\label{rightconv1}
 \Phi_{\text{JS},\infty}(v) \ \ = \ \    \int_{U(\R)}\d_{e_1}(vf_3u)\,\phi''(u)\,du \ , \ \ \ \ v \, \in \, \R^n\,,
\end{equation} is the archimedean component of the global Schwartz function that Jacquet-Shalika periodize their mirabolic Eisenstein series from
(cf.~(\ref{IfromPhi}) and \cite[(5.20-5.21)]{mirabolic}).  Because of the earlier discussion, the local pairing
(\ref{unfoldingprop3realrevised}) is equal to $\Psi_\infty(s,w_{\l,\d})$ times a normalizing factor related to the
total integral of the smoothing function $\phi$. The absolute convergence of the second integral in
(\ref{rightconv4}) is covered by Jacquet-Shalika \cite{jsextsq}.  We have now shown:

\begin{prop}\label{unfoldingprop}(Unfolding of the automorphic pairing)  Assume  that $\tau$ comes from a pure tensor satisfying (\ref{factorizepure2}), that  $\Re{s}$ is sufficiently large, and that the test function $\phi\in C_c^\infty(G(\R))$ splits the global pairing into a product of smoothings as in (\ref{splitint}).  Then the pairing $P(\tau,E(s))$ from (\ref{adelicpairing}) factorizes as the product
\begin{equation*}
    P(\tau,E(s)) \ \ = \ \ \Psi_\infty(s,w_{\l,\d})\,\times\,\prod_{p\,<\,\infty} \Psi_p(s,W_p,\Phi_p)\,.
\end{equation*}
\end{prop}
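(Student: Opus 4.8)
The plan is to reduce the distributional pairing to the classical global integral of Jacquet--Shalika \cite{jsextsq} and then import their unfolding essentially verbatim. First I would use the hypothesis on $\phi$ together with the open-orbit discussion preceding the statement: when $\phi$ is a product of three functions with sufficiently small supports around the base point $(f_1,f_2,f_3)$ in the open $G(\R)$-orbit on $X_n\times X_n\times Y_n$ --- such $\phi$ exist because the diagonal action is a local diffeomorphism there, and Theorem~\ref{thmpairing} guarantees the pairing does not depend on which such $\phi$ is used --- the smoothed kernel $\Phi(g_{2n},g_n)$ of (\ref{bigPhidef}) factors as in (\ref{splitint}): a separate smoothing of $\tau$ over the open subset $\ttwo{f_1}{}{}{f_2}N'(\R)B_{-}(\R)$ of $X_{2n}$, and of $E$ over $f_3U(\R)P_{-}(\R)\subset Y_n$. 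The point is that the extra $U_{n,n}$-smoothing present in (\ref{bigPhidef}) combines, through the dense embedding (\ref{pair3}), with the smoothing over the two $X_n$ factors to smooth $\tau$ over an open subset of the full flag variety $X_{2n}$, whereas $E$ is smoothed only over an open subset of $Y_n$ as in (\ref{mira3}). Hence the right-hand side of (\ref{splitint}) is literally a product of a smooth cusp vector $\phi_\pi$ in the space of $\pi$ and a smooth mirabolic Eisenstein series of the shape Jacquet--Shalika periodize.

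Next I would observe, combining (\ref{bigPhiuse}) with the definition (\ref{adelicpairing}) of $P(\tau,E(s))$ and the factorization (\ref{factorizepure2}) of $w$, that with such a $\phi$ the pairing $P(\tau,E(s))$ is exactly an instance of the Jacquet--Shalika global integral from \cite[\S5]{jsextsq}. Their unfolding --- collapsing the periodization that defines the mirabolic Eisenstein series and inserting the Fourier--Whittaker expansion of the cusp vector --- expresses this integral as an Euler product, i.e.\ as a product over all places $p\le\infty$ of local zeta integrals of the form (\ref{unfoldingprop3padic}). For $\Re{s}$ large every integral, and the product, converge absolutely, again by \cite{jsextsq}; this is the regime assumed throughout, at no loss of generality.

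Finally I would match the local factors with those in the statement. At $p<\infty$, since $w$ factors as $w_\infty(g_\infty)\prod_{p<\infty}W_p(g_p)$ with each $W_p$ in the Whittaker model $\mathcal{W}_p$ of $\pi_p$ and spherical for almost all $p$, the $p$-adic factor produced is precisely $\Psi_p(s,W_p,\Phi_p)$. At $p=\infty$, the factor produced is $\Psi_\infty(s,W_\infty,\Phi_{\mathrm{JS},\infty})$ with $W_\infty$ the right convolution (\ref{rightconv3}) of $w_{\l,\d}$ and $\Phi_{\mathrm{JS},\infty}$ the smoothed $\d$-function (\ref{rightconv1}); by the computation recorded in (\ref{rightconv4}) --- which only rearranges the $L_0$- and $U$-integrations and uses the transformation law (\ref{btransform}) --- this equals $\Psi_\infty(s,w_{\l,\d})$, up to a scalar absorbed because $\phi$ (and $\phi_U$, $\phi'$, $\phi''$) all have total integral $1$. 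Assembling the three steps yields $P(\tau,E(s))=\Psi_\infty(s,w_{\l,\d})\times\prod_{p<\infty}\Psi_p(s,W_p,\Phi_p)$.

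The hard part will be the first step: showing that the delicate iterated smoothing of distributions defining $P(\tau,E(s))$ really produces honest smooth automorphic forms of exactly the form entering the Jacquet--Shalika construction. This forces one to track which open Schubert cells the various smoothings land in --- so that the smoothed $\tau$ is supported on $\ttwo{f_1}{}{}{f_2}N'(\R)B_{-}(\R)$ and the smoothed $E$ on $f_3U(\R)P_{-}(\R)$ --- since only after restricting to those cells, via the trivialization (\ref{ps10}) and its mirabolic analogue, does one recover precisely the smooth functions that Jacquet--Shalika integrate. Once that identification is secured, the unfolding itself is entirely classical and is quoted from \cite{jsextsq}.
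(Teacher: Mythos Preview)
Your proposal is correct and follows essentially the same route as the paper: the argument in the paper is precisely the discussion leading up to the proposition, and you have reproduced its three stages faithfully --- use the splitting hypothesis (\ref{splitint}) to identify $\Phi(g_{2n},g_n)$ with a product of a smooth cusp vector and a smooth mirabolic Eisenstein series of Jacquet--Shalika type, quote their classical unfolding into an Euler product for $\Re s$ large, and identify the local factors via (\ref{rightconv3})--(\ref{rightconv1}) and the definition (\ref{rightconv4}). Your assessment that the real content lies in the first step, tracking the open Schubert cells so that the smoothed distributions become the smooth functions Jacquet--Shalika integrate, is exactly where the paper spends its effort as well.
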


\noindent As we mentioned above, the restriction on $\phi$ could be removed using the fact that both the  local
pairing (\ref{unfoldingprop3realrevised}) and $P(\tau,E(s))$ depend only on the total integral of $\phi$.
Technically speaking the argument of \cite{jsextsq} requires the decay estimate \cite[theorem~2.19]{rapiddecay}
for  smooth but not necessarily $K$-finite vectors, which was  then known to experts.

Jacquet and Shalika calculated their nonarchimedean local integrals in the unramified case, i.e., when $W_p$ is the
standard spherical Whittaker function, $\Phi_p$ is the characteristic function of $V(\Z_p)$,  and  both $\omega_p$
and $\chi_p$ are  trivial on $\Z_p^*$ \cite[Prop.~7.2]{jsextsq}:\footnote{The appearance of  $\chi\i$ in
(\ref{unfoldingprop3padic}) instead of their $\chi$  is due to a  different convention relating Dirichlet
characters to idele class characters.}
\begin{equation}\label{localunramcalc}
    \Psi_p(s,W_p,\Phi_p) \ \ = \ \  L_p(s,\pi,Ext^2\otimes\chi)\,.
\end{equation}
For example, when $\pi$ corresponds to a full-level cusp form, then $\pi_p$ is unramified for each $p<\infty$. If
$\omega$ and $\chi$ are furthermore both trivial, then the  pairing $P(\tau,E(s))$ simplifies to
\begin{equation}\label{unramifpnu}
    P(\tau,E(s)) \ \ = \ \ \Psi_\infty(s,w_{\l,\d}) \cdot     \ L(s,\pi,Ext^2)\,,
\end{equation}
  where $L(s,\pi,Ext^2)$ is the product over all the local factors for $p<\infty$.
The functional equation (\ref{penufefulllevel}) applies to this case, with  (\ref{unramifpnu}) adapted to the
contragredient automorphic distribution $\widetilde\tau$,
\begin{equation}\label{missingsign}
P(\widetilde{\tau},E(s)) \ \ = \ \ (-1)^{\d_2+\d_4+\d_6+\cdots+\d_{2n}}
\,\cdot \Psi_\infty(s,w_{\tilde{\l},\tilde{\d}})  \cdot \,L(s,\tilde{\pi},Ext^2)\,.
\end{equation}
This is because the Whittaker integral (\ref{whittonta1}) applied to (\ref{ab10}) involves the inverse character
$\psi_{+}\i$.  It can be converted back to $\psi_{+}$ using automorphy under a diagonal matrix with alternating
$\pm 1$ entries; the sign comes from the value of the inducing character $\chi_{\l,\d}$ on this matrix,
$(-1)^{\d_1+\d_3+\d_5+\cdots +\d_{2n-1}}=(-1)^{\d_2+\d_4+\d_6+\cdots+\d_{2n}}$.

We now focus on the computation of   $\Psi_\infty(s,w_{\l,\d})$ for $\Re{s}$ large. At this point it is convenient
for us  to   switch the base points $f_1$, $f_2$, and $f_3$ from (\ref{fixflag}) to another triple in the same
orbit; this will be accomplished by right-translation in $h$ by the matrix
\begin{equation}\label{h0def}
    h_0 \ \ = \ \ \(\begin{smallmatrix}
 \ 0 &  1 &  1 &  \cdots & 1 \\
 \ 0 &  0 &  1 &  1 & \vdots \\
 \vdots & \ddots  &  0 &  1 & 1 \\
 \ 0  & 0  & \cdots  &  0 & 1 \\
 -1 &  1 &  1 &  \cdots & 1 \\
\end{smallmatrix}\) \ \ = \ \ \(\begin{smallmatrix}
 1 &   &   &  &  & 1 \\
  &  1 &   &  &  & 1 \\
  &   &  \ddots & &  & \vdots \\
  &   &   & \ddots &  & \vdots \\
  &   &   &   & 1 &1 \\
  &   &   &   & & 1 \\
\end{smallmatrix}\)\(\begin{smallmatrix}
 \ 1 &   &   &   &   &  \\
 \ 1 &  -1 &   &   &   &  \\
 \ \vdots &  -1 &  -1 &   &   &  \\
 \ \vdots &  \vdots &  \ddots &  -1 &   &  \\
 \ 1 &  -1 &  \cdots &  -1 &  -1 &  \\
 -1 &  \ 1 &  \cdots &  \cdots &  \ 1 & 1 \\
\end{smallmatrix}\)\!.
\end{equation}
Every entry above the diagonal in the first matrix is 1,
 while  every entry on and below the diagonal in the last matrix is
 -1, except those indicated
  in the first column and last row.   Let
$w_k$ denote the matrix $w_{\text{long}}\in GL(k)$ from (\ref{ab9}),
\begin{equation}\label{wndef}
    w_k \ \ = \ \ \(\begin{smallmatrix}
 & & & 1 \\
 &  & 1 &  \\
  & \iddots & &  \\
 1 & & &
    \end{smallmatrix}\)
 \ \ , \ \ \ \ \
\det(w_k) \ \ = \ \ (-1)^{k(k-1)/2}\,.
\end{equation}
 Note that
  \begin{equation}\label{h0claim1}
\ttwo{w_{n-1}}{}{}1  \, h_0  \, w_n  \ \ = \ \ \ttwo{w_{n-1}}{}{}1  \, h_0  \, f_2  \ \ =  \ \ \(
\begin{smallmatrix}
 1 &   &   &  \\
 1 &  \ddots &   &  \\
 \vdots &  \ddots &  \ 1 &  \\
 1 &  \cdots &  \ 1 & -1 \\
\end{smallmatrix}\)
\end{equation}
and that the first row of  $w_n h_0 \ttwo{1}{{\bf 1}}{}{I_{n-1}}=w_n h_0 f_3$ is $[-1\,0\,0\,\cdots\,0]$, where
$\bf 1$ denotes the $(n-1)$-dimensional row vector of all 1's.  The last matrices in  lines
(\ref{h0def}-\ref{h0claim1}) are both lower triangular.  It follows that we may reassign
\begin{equation}\label{secondfixflags}
    f_1 \ \ = \ \ \ttwo{I_{n-1}}{{\bf 1}^t}{}{1} \ \ , \ \ \
    f_2 \ \ = \ \ \ttwo{w_{n-1}}{}{}{1} \ \ , \ \ \ \text{and} \ \
    f_3 \ \ = \ \ w_n\,
\end{equation}
in (\ref{rightconv3}-\ref{rightconv2}), at the expense of multiplying by an overall sign:
\begin{equation}\label{unfoldingprop30}
    \Psi_\infty(s,w_{\l,\d}) \ \ = \ \ \kappa_1 \, \int_{Z(\R)N(\R)\backslash G(\R)}  \int_{L_0(\R)} W_{\infty}\(\sigma\ell\ttwo g{}{}g\)\,I_{\text{JS},\infty}(g,  s  )\,d\ell\,dg\,,
\end{equation}
where  $\kappa_1 = (-1)^{\d_2+\cdots+\d_{n-1} + \d_{2n}+\e+\eta n(n+1)/2}$ (cf.~the discussion following
(\ref{fixflag})).  The choice (\ref{secondfixflags}) will be in effect for the duration of the paper, and the sign
$\kappa_1$ will be taken into account when using the functional equation (\ref{adelicpenufe}).

The integrand in (\ref{unfoldingprop30}) is smooth, and so the value of the integral is unchanged if the range of
$g$-integration is restricted to the dense open  subset of lower triangular matrices with bottom right entry 1.
 We may decompose such a matrix $g$ uniquely as a product $g=bq$ of   a matrix $b$ in
\begin{equation}\label{archintnot1}
 B_{-,n-1} \ \ = \ \ \left\{ C =  \(\begin{smallmatrix} c_{1,1} & 0 &  0 & 0  \\
                                    \vdots & \ddots &  0 & 0  \\
                                    c_{n-1,1} &\cdots &
                                    c_{n-1,n-1} & 0 \\
                                     0&0 & 0 & 1 \\
        \end{smallmatrix}\) \right\},
\end{equation}
the group of  lower triangular $(n-1)\times(n-1)$ matrices embedded into the upper left corner, and $q$ in $Q$, the
group of unit lower triangular matrices which differ from the identity matrix only in their bottom row.  In these
coordinates, the Haar measure $dg$ restricted to  $B_{-,n-1}\times Q$  is the product of
\begin{equation}\label{archintnot2}
\(\prod_{j\,=\,1}^{n-1}|c_{j,j}|^{j-(n+1)/2}
\)|\det C|^{-(n+1)/2}\, \prod_{1\le j \le i < n}d c_{i,j}
 \end{equation}
and the standard Haar measure $dq$ on the unipotent subgroup $Q$.  We shall abbreviate this measure more succinctly
as $|C^{-\rho}||\det C|^{-(n+1)/2}dCdq$.

Consider the $q$-integration in (\ref{unfoldingprop30}),
\begin{equation}\label{unfoldingprop32newa}
    \int_{Q(\R)}\int_{U(\R)} W_{\infty}\(\sigma \ell\ttwo{Cq}{}{}{Cq}\)
I_\infty(C q  f_3 u,  s  )\,\phi''(u)\,du\,dq\,.
\end{equation}
 The defining integral (\ref{IfromPhi}) has the property that
 \begin{equation}\label{lastlineadded}
 I_\infty(Cqf_3u, s ) \ \ = \ \ |\det C|^s \, \sgn(\det C)^\eta\,   I_\infty(qf_3u, s )\,.
 \end{equation}

   At the same time, $Q$ and $U$ are conjugate by the long Weyl group element $w_n=f_3$, so
 (\ref{secondfixflags}) and the definition (\ref{IfromPhi}) (with our specific choice of $\Phi_\infty=\d_{e_1}$ that defines the Eisenstein distribution $E$) imply $I_\infty(qf_3u,  s )=\sgn(\det w_n)^\eta\d_e(qw_nuw_n)$, i.e., the two integrations collapse to the submanifold $q=w_nu\i w_n$.   Let us write the matrix $\ell \in L_0$ as $\ttwo{I}{Z}{}{I}$, where $$Z= \(\begin{smallmatrix}
          0 & & & \\
          z_{1,1} & 0 & & \\
          \vdots & \ddots & \ddots & \\
          z_{n-1,1} & \cdots & z_{n-1,n-1} & 0 \\
    \end{smallmatrix}\)$$ is a strictly lower triangular matrix.  The Haar measure on $L_0$ is given by $d\ell=dZ=\prod_{1\le j \le i < n}dz_{i,j}$, and satisfies
    $d(ZC\i) = |C^{-{\rho}}|\,|\det C|^{-(n-1)/2}\, dZ$.

 Thus (\ref{unfoldingprop30}) can be written as
\begin{multline}\label{unfoldingprop31newz}
 \kappa_1' \int_{\R^{n(n-1)/2}}\int_{Q(\R)}\int_{\R^{n(n-1)/2}} W_\infty\(\sigma \ttwo{C}{Z}{}{C}\ttwo{q}{}{}{q}\)
\phi''(w_nq\i w_n) \ \times \\ \times \ |C^{-2\rho}||\det C|^{s-n}\,\sgn(\det C)^\eta\,dZ\,dq\,\,dC\,,
\end{multline}
where $\kappa_1'=\kappa_1\sgn(\det w_n)^\eta=(-1)^{\d_2+\cdots+\d_{n-1} + \d_{2n}+\e+n\eta }$.

Recall from the discussion following (\ref{splitint}) that $\phi'\in C_c^\infty(N'(\R))$ and $\phi''\in
C_c^\infty(U(\R))$  can be arbitrary functions of small support, subject to the condition that the total integral
of $\phi\in C_c^\infty(G(\R))$ is equal to one.  As the support shrinks, the product of the total integrals of
$\phi'$ and $\phi''$ tends to one; this is because  the  line bundle characters (implicitly used to set up the
splitting of the integration on the product of flag varieties) take  values close to one near the identity.  We
shall henceforth drop the total integral constraint on $\phi$, but insist that $\phi'$ and $\phi''$ are approximate
identity sequences, with support concentrating on the identity element.  Then (\ref{unfoldingprop31newz}) converges
to $\Psi_\infty(s,w_{\l,\d})$  independently of which functions are chosen, so long as their supports both shrink
to zero.

At this point, $\phi''$ is the only remnant of the Eisenstein series, and it serves to smooth $W_\infty$ --
resmooth, in fact, in light of (\ref{rightconv3}).  For any fixed $\phi'$, shrinking the support of $\phi''$ to
$\{e\}$ in (\ref{unfoldingprop31newz}) gives a well-defined limit that itself approaches $\Psi_\infty(s,w_{\l,\d})$
as the support of $\phi'$ shrinks to $\{e\}$.  Thus we may replace $\phi''$ by a delta function at the identity
element, and simplify (\ref{unfoldingprop31newz})  to the absolutely convergent subintegration over $q=e$,
\begin{equation}\label{rightconv3rewritten}
\kappa_1' \int_{\R^{n(n-1)}} W_\infty\(\sigma \ttwo{C}{Z}{}{C} \)
\,d\mu\,,
\end{equation}
where
\begin{equation}\label{dmu}
    d\mu \ \ = \ \ |C^{-2\rho}||\det C|^{s-n}\,\sgn(\det C)^\eta\,dC\,dZ\,.
\end{equation}
Define the family of approximate identities on $N'(\R)$ by
\begin{equation}\label{phit}
   \phi'_t(n') \  \ = \ \ a_t^{2\rho}\,\phi'(a_tn'a_t\i)\ \, , \ \ \ \ t \,\rightarrow\,\infty\,,
\end{equation}
where the diagonal matrix $a_t=\operatorname{diag}(t^{(n-1)/2},t^{(n-3)/2},\ldots,t^{-(n-1)/2})$   takes the value
$t$ on each positive simple root. Consider the Whittaker integral (\ref{rightconv3}) with $\phi'_t$ in place of
$\phi'$ and without the flags,
\begin{equation}\label{tshrink1}
    W_{\infty,t}(g_{2n}) \  =  \ \int_{N'(\R)}w_{\l,\d}\(g_{2n}\,a_t\i n'a_t\)\,\phi'(n')\,dn'
       \   = \ a_t^{\rho-\l}\,W_{\infty,1}(g_{2n}a_t\i)\,,
\end{equation}
which converges to $w_{\l,\d}(g_{2n})$ as $t\rightarrow\infty$ for $g_{2n}$ lying in the  open Schubert cell
$N(\R)B_{-}(\R)$.    We conclude that
\begin{equation}\label{tshrink2}
    \Psi_\infty(s,w_{\l,\d}) \ \ = \ \ \lim_{t\rightarrow\infty} \kappa_1'\,a_t^{\rho-\l}\, \int_{\R^{n(n-1)}}
W_{\infty,1}\(\sigma\ttwo{C}{Z}{}{C}\ttwo{f_1}{}{}{f_2}a_t\i\)d\mu\,.
\end{equation}
In particular,
 the limiting terms in the integrand defining $\Psi_\infty(s,w_{\l,\d})$ are asymptotic limits of the Whittaker function in the negative Weyl chamber, a fact which reflects the alternative construction of automorphic distributions as boundary values  of automorphic forms \cite{flato}.    In the next two sections we will derive a coordinate change on $C$ and $Z$ that is more convenient for this integration.

\section{Matrix Decompositions}
\label{slicingsec}

In this section we restrict our attention to  real groups, and shall identify the algebraic groups of the previous
sections with their real points for notational compactness. Our calculations of the Gamma factors involve finding
explicit unit upper triangular representatives for cosets in $G/B_{-}$, of $G=GL(n,\R)$ modulo the group $B_{-}$ of
invertible lower triangular matrices.   In linear algebra this is sometimes called the $UDL$
decomposition\begin{footnote}{More common are references to the so-called LU decomposition of a generic matrix
$g\in GL(n,\R)$ as a product $g=\ell u$ of a lower triangular matrix $\ell$ and an upper triangular unipotent
matrix $u$.}\end{footnote}, and is computed by elementary column operations. Such a decomposition for
$g=(g_{i,j})\in GL(n,\R)$ exists  if and only if    the condition
\begin{equation}\label{djnonzerocond}
    d_k  \ =_{\text{def}}  \ \det\(\begin{smallmatrix}
    g_{k,k} & \cdots & g_{k,n} \\
    \vdots & \ddots & \vdots \\
    g_{n,k} & \cdots & g_{n,n} \\
    \end{smallmatrix}\)     \ \ \neq \ \ 0 \, , \ \ 1 \ \le k \ \le n
\end{equation}
holds, in which case it is unique.

\begin{lem}\label{ludmilla}  Let $g=(g_{i,j})$ be an $n\times n$ matrix with indeterminate entries.  Then there exist $n\times n$ matrices $b_+$, $a$, $b_-$, whose entries are polynomial functions of the $g_{i,j}$, with $b_+$ upper triangular, $a$ diagonal, $b_-$ lower triangular, satisfying the formal identity
\[
g \ \ = \ \ b_+\,a^{-1}\,b_-\,.
\]
The decomposition can be chosen so that all nonzero entries of $b_+$ and $b_-$ are determinants of subblocks of $g$
obtained by removing rows and columns, and the diagonal entries of $a$ are products of two determinants of
subblocks; concretely
\begin{equation*}
\begin{aligned}
&a_{i,i}\ \ = \ \ \left(\det\(g_{k,\ell}\)_{i+1\le k,\ell \le n
}\right)\left( \det\(g_{k,\ell}\)_{i\le k,\ell \le n } \right) \,,
\\
&\qquad\qquad \(b_+\)_{i,j}\ \ = \ \
\det\(g_{k,\ell}\)_{\,\stackrel{\scriptstyle{k=i{\rm~or~}k>j}}{j\le
\ell \le n\ \ \ \ }} \ \ \ \rm{for}\ i \leq j \,,
\\
&\qquad\qquad\qquad\qquad \(b_-\)_{i,j}\ \ = \ \
\det\(g_{k,\ell}\)_{\,\stackrel{\scriptstyle{j\le k \le n\ \ \
}}{\ell=i\rm{~or~}\ell>j}} \ \ \ \rm{for}\ i \geq j \,.
\end{aligned}
\end{equation*}
\end{lem}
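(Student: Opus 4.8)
The plan is to establish the factorization $g = b_+ a^{-1} b_-$ as a formal identity in the polynomial ring $\Z[g_{i,j}]$, by exhibiting explicit polynomial candidates for the entries of $b_+$, $a$, $b_-$ (as given in the statement) and verifying the matrix equation entrywise. The natural route is induction on $n$, peeling off the last row and column, mirroring the classical $UDL$ (or $LU$) algorithm, which proceeds by column operations. Concretely, let $d_k$ be the trailing principal minors from \eqref{djnonzerocond}; over the quotient field $\Q(g_{i,j})$ all $d_k$ are nonzero (they are nonzero polynomials), so the ordinary $UDL$ decomposition exists and is unique there, and one knows abstractly that $g = U D^{-1} L$ with $U$ unit-upper-triangular, $D$ diagonal, $L$ lower-triangular. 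The content of the lemma is the precise \emph{determinantal formula} for the entries together with the assertion that, after the rescaling built into the stated normalization, all entries become \emph{polynomials} (not just rational functions) in the $g_{i,j}$.

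First I would record the recursive step. Writing $g = \left(\begin{smallmatrix} g' & v \\ w & g_{n,n}\end{smallmatrix}\right)$ with $g'$ the top-left $(n-1)\times(n-1)$ block, one clears the last column of $g$ above the entry $g_{n,n}$ by right-multiplying by a unit-upper-triangular matrix, then clears the last row, at the cost of modifying $g'$ by a rank-one term $v\,w/g_{n,n}$ (a Schur-complement step). The key classical fact is that the minors of this updated $(n-1)\times(n-1)$ block are again minors of the original $g$: this is the Desnanot–Jacobi / Dodgson-condensation type identity, which says the trailing minors of the Schur complement equal (up to the pivot $g_{n,n}$, i.e.\ $d_n$) the corresponding trailing minors of $g$. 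Iterating, each pivot at stage $k$ is the ratio $d_k/d_{k+1}$, and the accumulated entries of $U$ and $L$ are exactly the determinants of the ragged subblocks displayed in the statement: for $(b_+)_{i,j}$ one keeps row $i$ together with all rows below row $j$, in columns $j$ through $n$; for $(b_-)_{i,j}$ the transposed prescription. I would then \emph{define} $b_+$, $a$, $b_-$ by those determinantal formulas and check directly that $b_+ a^{-1} b_- = g$ holds identically — here the normalization $a_{i,i} = d_{i+1}\,d_i$ is exactly what is needed so that $a^{-1}$ contributes the telescoping pivot ratios and absorbs all the denominators, leaving polynomial entries. The polynomiality of $(b_\pm)_{i,j}$ is immediate since they are literally determinants of submatrices of $g$; the polynomiality of $a_{i,i}$ is immediate for the same reason; and the identity $g = b_+ a^{-1} b_-$, once known over $\Q(g_{i,j})$, propagates to the formal polynomial identity because both sides are polynomial expressions in the $g_{i,j}$ and the localization map $\Z[g_{i,j}] \hookrightarrow \Q(g_{i,j})$ is injective.

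The main obstacle is the bookkeeping in the entrywise verification that the stated ragged-minor determinants really do multiply back to $g$ — i.e.\ proving the identity $\sum_{k} (b_+)_{i,k}\, a_{k,k}^{-1}\, (b_-)_{k,j} = g_{i,j}$ for all $i,j$. This is where one needs a genuine combinatorial determinant identity rather than just abstract uniqueness: expanding the left side and collapsing the sum requires a Cauchy–Binet or Plücker/Sylvester-type relation among minors of $g$ (the case $i=j$ is a telescoping of trailing minors; the off-diagonal cases use a Sylvester identity relating a minor with one extra row and column inserted to products of smaller minors). I would organize this by first handling $i=j$ (pure telescoping), then the strictly-upper and strictly-lower cases by symmetry (transpose $g$), and within each case proceed by induction on $n-i$ using the Schur-complement recursion above, so that the single identity needed at each stage is the three-term Desnanot–Jacobi relation. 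An alternative that sidesteps most of the algebra: invoke uniqueness of the $UDL$ decomposition over $\Q(g_{i,j})$ to know \emph{some} formulas exist, then verify that the displayed determinantal expressions satisfy the defining triangularity and normalization constraints and reproduce the recursion — by uniqueness they must then be the right ones, and polynomiality is read off from the formulas. I would present the proof in this second style, with the Desnanot–Jacobi identity cited as the one nontrivial input, since it keeps the argument short and makes the polynomiality claim transparent.
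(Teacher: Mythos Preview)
Your proposal is correct but takes a different route from the paper. You propose either (a) an inductive Schur-complement argument using Desnanot--Jacobi to track how the ragged minors transform, or (b) invoking uniqueness of the $UDL$ decomposition over $\Q(g_{i,j})$ and then verifying the stated formulas entrywise via Sylvester/Pl\"ucker-type identities. The paper instead uses a clean invariance trick: it observes that each determinant $(b_+)_{i,j}$ is unchanged when $g$ is replaced by $gu$ for any unipotent lower triangular $u$, because the relevant subblock is multiplied on the right by a unipotent block of $u$. Since a generic $g$ can be brought to upper triangular form by such a right multiplication, this reduces the verification of the identity on and above the diagonal to the case of upper triangular $g$, where it is immediate; the entries on and below the diagonal follow by the transposed argument. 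Your approaches would certainly work and have the merit of being self-contained and algorithmic, but they require either quoting or proving a nontrivial determinantal identity at the inductive step. The paper's invariance argument sidesteps all of that combinatorics and is considerably shorter --- worth knowing as a general technique for checking minor-based formulas.
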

Zhelobenko \cite{Zhelobenko} describes a less explicit formula of this type for $GL(n)$. Fomin-Zelevinsky \cite{fz}
give a   similar   formula for any reductive matrix group. In principle, our formula can be deduced from theirs,
but it is just as easy to prove our formula directly.
\begin{proof} The crucial observation is that the entries
of $b_+$ do not change if $g$ is multiplied from the right by a unipotent lower triangular matrix $u\,$: for $i\leq
j$, $(b_+)_{i,j}$ is the determinant of a matrix $m$ obtained from $g$ by omitting certain rows and exactly the
first $j-1$ columns; the passage from $g$ to $gu$ has the effect of multiplying $m$ on the right by the left bottom
$(n+1-j)$ square block of $u$, which does not affect $\det m = (b_+)_{i,j}$. In view of the $UDL$ decomposition,
there exists a unipotent lower triangular matrix $u$ whose entries depend rationally on those of $g$, such that
$gu$ is upper triangular. Hence, and because of what was said just before, the identity of the lemma holds for
entries on and above the diagonal if and only if it holds for any upper triangular matrix $g$. In that special
case, the identity can be verified directly. Switching the roles of left and right, as well as upper and lower, the
identity for entries on and below the diagonal follows the same way. \end{proof}

\begin{cor}\label{ludcor}
If $g =n h n_-$, with $n$, $h$, $n_-$ upper triangular unipotent, diagonal, and lower triangular unipotent,
respectively, then
\[
\begin{aligned}
&h_{i,i}\ \ = \ \ \f{d_i}{d_{i+1}} \ \ = \ \
\f{\det\bigl((g_{k,\ell})_{\,k,\ell\geq
i}\bigr)}{\det\bigl((g_{k,\ell})_{\,k,\ell>i}\bigr)}\,,
\\
&\qquad n_{i,i+1} \ \ = \ \
\f{\det\((g_{k,\ell})_{\stackrel{\scriptstyle{\,k\geq i,\, k \neq
i+1}}{\ell>i\ \ \ \ \ \ }}\)}{d_{i+1}} \ \ = \ \
\f{\det\((g_{k,\ell})_{\stackrel{\scriptstyle{\,k\geq i,\, k\neq
i+1}}{\ell>i\ \ \ \ \ \
}}\)}{\det\bigl((g_{k,\ell})_{k,\ell>i}\bigr)}\ .
\end{aligned}
\]
\end{cor}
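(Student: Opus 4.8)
The plan is to deduce the corollary from \lemref{ludmilla} by normalizing the outer factors to be unipotent and then quoting uniqueness of the Gauss decomposition. Note first that a factorization $g = n h n_-$ with $n,n_-$ unit upper, respectively unit lower, triangular and $h$ diagonal is the classical Gauss (or ``$UDL$'') decomposition, which exists precisely when the trailing principal minors $d_k$ of (\ref{djnonzerocond}) are all nonzero, and is then unique (if $n_1 h_1 m_1 = n_2 h_2 m_2$ are two such, then $n_2^{-1}n_1 h_1 = h_2 m_2 m_1^{-1}$ is simultaneously upper and lower triangular, hence diagonal, forcing the three factors to agree). So it suffices to exhibit one such factorization and read off $h_{i,i}$ and $n_{i,i+1}$.

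Next I would take the polynomial identity $g = b_+ a^{-1} b_-$ from \lemref{ludmilla}, denote by $D_+$, $D_-$ the diagonal parts of $b_+$, $b_-$, and write $b_+ = n' D_+$ and $b_- = D_- n''$ with $n'$ unit upper triangular and $n''$ unit lower triangular (legitimate because, as computed below, the diagonal entries of $b_\pm$ are the nonvanishing $d_i$). Then $g = n'\,(D_+ a^{-1} D_-)\,n''$ is a Gauss decomposition, so by uniqueness $n = n'$, $h = D_+ a^{-1} D_-$, and $n_- = n''$. In particular $h_{i,i} = (b_+)_{i,i}\,a_{i,i}^{-1}\,(b_-)_{i,i}$ and $n_{i,i+1} = (b_+)_{i,i+1}/(b_+)_{i+1,i+1}$.

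The last step is pure bookkeeping with the index sets in \lemref{ludmilla}. Putting $j=i$ in the formula for $(b_+)_{i,j}$ collapses its index set $\{k=i\text{ or }k>j,\ j\le\ell\le n\}$ to $\{k\ge i,\ \ell\ge i\}$, so $(b_+)_{i,i}=d_i$; the analogous collapse in the formula for $(b_-)_{i,j}$ gives $(b_-)_{i,i}=d_i$; and $a_{i,i}$ is literally $d_{i+1}d_i$ (with the empty-determinant convention $d_{n+1}=1$). Hence $h_{i,i}=d_i^2/(d_{i+1}d_i)=d_i/d_{i+1}$, which is the first assertion. For the off-diagonal entry, putting $j=i+1$ in the formula for $(b_+)_{i,j}$ gives $(b_+)_{i,i+1}=\det\big((g_{k,\ell})_{k\ge i,\,k\neq i+1,\ \ell>i}\big)$, while $(b_+)_{i+1,i+1}=d_{i+1}$ by the diagonal computation just made; dividing yields $n_{i,i+1}=\det\big((g_{k,\ell})_{k\ge i,\,k\neq i+1,\ \ell>i}\big)/d_{i+1}$, and rewriting $d_{i+1}=\det\big((g_{k,\ell})_{k,\ell>i}\big)$ produces the second displayed form. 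There is no real obstacle here: the only thing demanding a moment's attention is verifying that those three index-set collapses genuinely reproduce the minors $d_i$ and $d_{i+1}$, after which both formulas of the corollary are immediate.
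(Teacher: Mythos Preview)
Your proof is correct and follows exactly the route the paper intends: the corollary is stated without proof immediately after \lemref{ludmilla}, and the intended derivation is precisely to normalize $b_+ a^{-1} b_-$ into a unipotent-diagonal-unipotent product and invoke uniqueness of the Gauss decomposition. Your bookkeeping with the index sets is accurate, and there is nothing to add.
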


Our next topic is choosing a convenient set of coordinates for the entries of a matrix so that the factors
$h_{i,i}$ and $n_{i,i+1}$ have simple expressions.  This will ultimately be useful  in our computation of the Gamma
factors for $L(s,\pi,Ext^2)$ in the next section.

\begin{lem}\label{slicinduct}
Let $A=(a_{i,j})$ be an $n \times n$ matrix with indeterminate entries.  Let $A'=(a'_{i,j})$ be the matrix formed
from $A$ by replacing the entries $a_{i,n}$ in the last column with entries $\tilde{a}_{i,n}$ such that for any $1
\le k \le n$
\begin{equation}\label{slicinductproperty}
    \det\(\begin{smallmatrix}
 a_{k,k} &  \cdots &  a_{k,n-1} & \tilde{a}_{k,n} \\
 \vdots &  \ddots &  \vdots & \vdots \\
 a_{n,k} &  \cdots &  a_{n,n-1} & \tilde{a}_{n,n} \\
\end{smallmatrix}\) \ \ = \ \ (-1)^{n-k}\,a_{k,n}\,
\det\(\begin{smallmatrix}
 a_{k+1,k} &  \cdots &  a_{k+1,n-1}  \\
 \vdots &  \ddots &  \vdots &  \\
 a_{n,k} &  \cdots &  a_{n,n-1} \\
\end{smallmatrix}\)\,.
\end{equation}
Let $A''=(a''_{i,j})$ be the matrix formed from $A$ by removing the last column.  Then for any $1 \le i \le  n-1$
one has that
\begin{equation}\label{slicinductpunch}
    \f{\det\((a'_{k,\ell})_{\stackrel{\scriptstyle{\,k\geq i,\, k\neq
i+1}}{\ell>i\ \ \ \ \ \
}}\)}{\det\bigl((a'_{k,\ell})_{k,\ell>i}\bigr)} \ \ = \ \
\f{a_{i,n}}{a_{i+1,n}} \ + \
  \f{\det\((a''_{k,\ell})_{\stackrel{\scriptstyle{\,k\geq i,\, k\neq
i+1}}{\ell>i-1\ \ \ \
}}\)}{\det\bigl((a''_{k,\ell})_{k>i,\,\ell>i-1}\bigr)} \, ,
\end{equation}
i.e., the difference between the quantities $n_{i,i+1}$ for $A'$ and $A''$ is $\f{a_{i,n}}{a_{i+1,n}}$.
\end{lem}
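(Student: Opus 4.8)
The plan is to translate the two terms of (\ref{slicinductpunch}) into ratios of minors of $A'$ by means of \corref{ludcor}, and then to collapse their difference in one step using the Desnanot--Jacobi (Dodgson condensation) determinant identity together with just the cases $k=1$ and $k=2$ of the hypothesis (\ref{slicinductproperty}). Since \corref{ludcor} expresses the quantity $n_{i,i+1}$ of a matrix purely through its entries with row and column index $\ge i$, and the same is true of the two cases of (\ref{slicinductproperty}) that are needed, it suffices to treat $i=1$ (the general case follows at once by restriction to the bottom-right $(n-i+1)\times(n-i+1)$ block and relabeling of indices). So let $c_j$ denote the $j$-th column of $A$, let $\tilde c$ be the column $(\tilde a_{1,n},\dots,\tilde a_{n,n})^{t}$, write $P$ for the matrix with columns $c_2,\dots,c_{n-1}$, and for a column vector $v$ let $f(v)$ and $g(v)$ be the determinants of the $(n-1)\times(n-1)$ matrices obtained from the $n\times(n-1)$ matrix with columns $c_2,\dots,c_{n-1},v$ by deleting its row $2$, respectively its row $1$. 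Then \corref{ludcor} identifies the left side of (\ref{slicinductpunch}) with $f(\tilde c)/g(\tilde c)$, and --- after a column permutation whose sign cancels between numerator and denominator --- the last term on the right side with $f(c_1)/g(c_1)$, so that (\ref{slicinductpunch}) becomes the identity of rational functions in the $a_{i,j}$
\[
\frac{f(\tilde c)}{g(\tilde c)}-\frac{f(c_1)}{g(c_1)}\ =\ \frac{f(\tilde c)\,g(c_1)-f(c_1)\,g(\tilde c)}{g(\tilde c)\,g(c_1)}\ =\ \frac{a_{1,n}}{a_{2,n}}\,.
\]

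The crucial step is to evaluate the numerator by applying the Desnanot--Jacobi identity to the $n\times n$ matrix $M$ whose columns are $c_2,\dots,c_{n-1},\tilde c,c_1$, removing its first two rows and its last two columns; this gives $f(\tilde c)\,g(c_1)-f(c_1)\,g(\tilde c)=\det M\cdot\det\!\big((a_{k,\ell})_{\,k\ge 3,\ 2\le\ell\le n-1}\big)$, with no residual sign (the sign factors in the general Jacobi identity cancel in pairs). Now $M$ differs from $A'$ only by a cyclic shift of columns, so $\det M=(-1)^{n-1}\det A'$; moreover one checks directly that $g(\tilde c)=\det\!\big((a'_{k,\ell})_{\,k,\ell\ge 2}\big)$ and $g(c_1)=(-1)^{n-2}\det\!\big((a_{k,\ell})_{\,k\ge 2,\ \ell\le n-1}\big)$. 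Feeding in the two instances of (\ref{slicinductproperty}) --- namely $\det A'=(-1)^{n-1}a_{1,n}\det((a_{k,\ell})_{k\ge 2,\ \ell\le n-1})$ (case $k=1$) and $\det((a'_{k,\ell})_{k,\ell\ge 2})=(-1)^{n-2}a_{2,n}\det((a_{k,\ell})_{k\ge 3,\ 2\le\ell\le n-1})$ (case $k=2$) --- the two surviving minors $\det((a_{k,\ell})_{k\ge 2,\ \ell\le n-1})$ and $\det((a_{k,\ell})_{k\ge 3,\ 2\le\ell\le n-1})$ cancel between numerator and denominator, as do the powers of $-1$, and the ratio collapses to $a_{1,n}/a_{2,n}$. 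This proves (\ref{slicinductpunch}) for $i=1$, and hence in general.

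I expect the only genuine difficulty to be bookkeeping, not anything conceptual: one must correctly match the second term of (\ref{slicinductpunch}) and the determinants $\det M$, $g(\tilde c)$, $g(c_1)$ against the explicit minors appearing in (\ref{slicinductproperty}) through the right column reorderings, and then verify that all the accumulated sign factors --- the $(-1)^{n-1}$ from the cyclic column shift in $M$, the $(-1)^{n-2}$ from the reordering in $g(c_1)$, the $(-1)^{n-k}$ built into (\ref{slicinductproperty}) for $k=1,2$, and the (trivial) Desnanot--Jacobi sign --- multiply out to $+1$ rather than $-1$. It should also be noted at the outset that all of this takes place over $\Q(a_{i,j})$: every minor that occurs is a nonzero polynomial in the indeterminates, so the displayed identity makes sense and, in particular, \corref{ludcor} applies to $A'$.
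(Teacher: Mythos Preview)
Your argument is correct and takes a genuinely different route from the paper's.

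Both proofs begin identically: reduce to $i=1$ by passing to the bottom-right block, and use only the instances $k=1$ and $k=2$ of the hypothesis (\ref{slicinductproperty}). After that the approaches diverge. The paper performs elementary column operations on $A'$, adding multiples of columns $2,\dots,n-1$ to the last column so that it becomes $(y_1,y_2,0,\dots,0)^t$; the hypothesis with $k=2$ forces $y_2=a_{2,n}$, the left side of (\ref{slicinductpunch}) collapses to $y_1/y_2$, and expanding $\det B$ along the last column and invoking $k=1$ produces the desired relation for $y_1$. Your proof instead packages the same information into a single application of the Desnanot--Jacobi identity to the auxiliary matrix $M=[c_2,\dots,c_{n-1},\tilde c,c_1]$, so that the numerator $f(\tilde c)g(c_1)-f(c_1)g(\tilde c)$ factors automatically as $\det M$ times the $(n-2)\times(n-2)$ interior minor, and the two uses of (\ref{slicinductproperty}) then make numerator and denominator cancel.

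What each buys: the paper's column-reduction argument is entirely self-contained and needs nothing beyond cofactor expansion, at the price of introducing the auxiliary quantities $y_1,y_2$ and tracking them through a couple of steps. Your argument is more structural and essentially one line once Desnanot--Jacobi is granted, but it imports that identity as a black box and leaves the sign-tracking you flag in your last paragraph (which does work out, as you indicate). Either proof is perfectly adequate here; yours has the mild advantage of making transparent that only the $2\times 2$ Sylvester-type relation among minors of $A'$ is being used.
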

For example, in the case $n=2$ the matrix $A'$ is given by
\begin{equation}\label{aprimen2}
    A' \ \ = \ \ \(\begin{matrix} a_{1,1} & \tilde{a}_{1,2}    \\
    a_{2,1} & \tilde{a}_{2,2} \end{matrix}\) \ \ = \ \
     \(\begin{matrix} a_{1,1} &
     a_{1,2}+\f{a_{1,1} a_{2,2}}{a_{2,1}}
    \\
    a_{2,1} & a_{2,2} \end{matrix}\)\,,
\end{equation}
and for  $i=1$ indeed $\f{\tilde{a}_{1,2}}{\tilde{a}_{2,2}}=\f{a_{1,2}}{a_{2,2}}+\f{a_{1,1}}{a_{2,1}}$ .

\begin{proof}  The assertion for a given value of $i$ is
 independent of the matrix entries that lie above and to the left of
  the lower right
 $(n+1-i)\times (n+1-i)$ block of $A$.  It therefore suffices to prove the
 result for $i=1$.  We must verify an assertion about determinants
 of $(n-1)\times(n-1)$ subblocks, whose rightmost column is
one of the last two columns.  As these are unchanged
 by adding linear combinations of the left $n-2$ columns to their
 last column, we may instead replace by $A'$ by the matrix $B$,
 formed from $A'$ by replacing the last column by a vector of
 the form
 $(y_1,y_2,0,\ldots,0)$.  (This is possible since the $a_{i,j}$
 are indeterminates.)  Because of property
 (\ref{slicinductproperty}) for  $k=2$, and because these
 determinants are unchanged by this column operation, we have that
 $y_2=a_{2,n}$.

The left hand side of (\ref{slicinductpunch}) is equal to
\begin{equation}\label{leftpunch}
    \f{\det\(\begin{smallmatrix}
 \star &  \cdots &  \star & y_1 \\
 a_{3,2} &  \cdots &  a_{3,n-1} & 0 \\
 \vdots &  \ddots &  \vdots & \vdots \\
 a_{n,2} &  \cdots &  a_{n,n-1} & 0 \\
    \end{smallmatrix}\)}{ \det\(\begin{smallmatrix}
 \star &  \cdots &  \star & y_2 \\
 a_{3,2} &  \cdots &  a_{3,n-1} & 0 \\
 \vdots &  \ddots &  \vdots & \vdots \\
 a_{n,2} &  \cdots &  a_{n,n-1} & 0 \\
    \end{smallmatrix}\)} \ \ = \ \ \f{y_1}{y_2} \ \ = \ \
    \f{y_1}{a_{2,n}}\,.
\end{equation}
At the same time, (\ref{slicinductproperty}) shows that $\det B=\det A'=(-1)^{n-1}a_{1,n}\det C$, where $C$ is the
submatrix of $A$ formed by removing its first row and last column.  Expanding $\det B$ by minors along the last
column gives the formula
\begin{equation}\label{slicinductminor}
    (-1)^{n-1}\,a_{1,n}\,\det C \  \ = \ \ \det{B} \ \ = \  \
    (-1)^{n-1} \,y_1\,\det C  + (-1)^n\,y_2 \det D\, ,
\end{equation}
where $D$ is the submatrix formed from $A$ by removing the second row and last column.  This shows that
$\f{a_{1,n}}{a_{2,n}}=\f{y_1}{a_{2,n}}-\f{\det D}{\det C}$, for $y_2=a_{2,n}$.  The determinants of $D$ and $C$ are
the numerator and denominator, respectively, of the last term in (\ref{slicinductpunch}).  The lemma  now follows
from (\ref{leftpunch}). \end{proof}

In  the previous lemma, we did not say anything about the existence of entries $\tilde{a}_{i,n}$ which satisfy
(\ref{slicinductproperty}).  Actually, this is not difficult to show, and since we will rely on this type of change
of coordinates repeatedly, we record it here as part of a more general lemma.

\begin{lem}\label{agreeingproperty}
Let $A=(a_{i,j})$ be an $n\times n$ matrix with indeterminate entries.  Then there exists a unique $n\times n$
matrix $B=(b_{i,j})$, with   entries of the  form $b_{i,j}=a_{i,j}+r_{i,j}$, satisfying the following properties:
\begin{enumerate}
    \item $r_{i,j}=0$ if $i+j\le n$;
    \item the $r_{i,j}=r_{i,j}(a_{k,\ell})$  are rational functions of the $a_{k,\ell}$ for which $k\ge i$,
        $\ell\le j$, and $(k,\ell)\neq (i,j)$;
    \item $ \det\!\(\begin{smallmatrix}
 b_{n-j+i,i} & \textstyle{\dots} & b_{n-j+i,j} \\
 \vdots &  \ddots & \vdots \\
 b_{n, i} &  \textstyle{\dots} & \ b_{n, j} \\
\end{smallmatrix}\) \,  = \
\det\!\(\begin{smallmatrix}
0 & 0  & 0 & a_{n-j+i,j} \\
0 & 0  & a_{n-j+i+1,j-1} & 0 \\
 0 &  \iddots & 0 &  0 \\
a_{n,i} &  0 &   0 &  0
\end{smallmatrix}\) $
for any $i<j$.
\end{enumerate}
\end{lem}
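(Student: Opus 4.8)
The plan is to prove existence and uniqueness simultaneously by constructing the correction terms $r_{i,j}$ one anti-diagonal at a time, starting from the anti-diagonal $i+j=n+1$ and moving toward the lower-right corner. Condition (1) says there is nothing to do on the anti-diagonals $i+j\le n$; so I set $r_{i,j}=0$ there and observe that condition (3) is vacuous there as well, since the smallest block to which (3) applies (taking $i=j-1$, and then demanding $n-j+i=n-1$, i.e.\ a $2\times 2$ block in rows $n-1,n$) already involves an entry $b_{i,j}$ with $i+j=n+1$. The key structural point is that condition (3), for a fixed pair $i<j$, is an equation of the form
\begin{equation}\label{eq:propplan1}
  \det\!\(\begin{smallmatrix}
    b_{n-j+i,i} & \dots & b_{n-j+i,j}\\ \vdots & \ddots & \vdots\\ b_{n,i} & \dots & b_{n,j}
  \end{smallmatrix}\)
  \ = \ (-1)^{j-i}\,a_{n-j+i,j}\,a_{n-j+i+1,j-1}\cdots a_{n,i}\,,
\end{equation}
whose left side, when expanded by minors along its last column, is linear in the single entry $b_{n-j+i,j}=a_{n-j+i,j}+r_{n-j+i,j}$ occupying the upper-right corner of that block; the coefficient of that entry is the determinant of the block obtained by deleting the top row and last column, which is a lower-triangular-type block whose entries have already been fixed on earlier anti-diagonals (or are original entries $a_{k,\ell}$ with $k>i$). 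Provided that coefficient is a nonzero polynomial in the indeterminates, \eqref{eq:propplan1} determines $r_{n-j+i,j}$ uniquely as a rational function of entries already in play.

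Concretely I would induct on the anti-diagonal index $m=i+j$, with $m$ running from $n+1$ up to $2n$. For the inductive step, suppose all $r_{k,\ell}$ with $k+\ell<m$ have been uniquely determined so that every instance of (3) whose top-right corner entry $b_{n-j+i,j}$ sits on an earlier anti-diagonal holds. Fix $(i_0,j_0)$ with $i_0+j_0=m$ and $i_0<j_0$ — note $r_{i_0,j_0}$ is the top-right corner of exactly the block in (3) indexed by the pair $(i,j)=(i_0,j_0-? )$; more precisely, the entry $b_{i_0,j_0}$ is the upper-right corner of the block attached to the pair $(i,j)$ with $i=i_0$ (wait: in the notation of (3) the corner entry is $b_{n-j+i,j}$, so I want $n-j+i=i_0$ and $j=j_0$, forcing $i=i_0+j_0-n=m-n$). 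Thus $r_{i_0,j_0}$ appears as the corner unknown in the single instance of (3) with $(i,j)=(m-n,\,j_0)$, and the requirement $i<j$ there reads $m-n<j_0$, i.e.\ $i_0>0$, which holds. Expanding that determinant along its last column expresses it as $b_{i_0,j_0}\cdot M - (\text{terms in entries already fixed})$, where $M$ is the minor got by deleting row $i_0$ and the last column of the block; $M$ is the determinant of a matrix all of whose entries are $a_{k,\ell}$ with $k>i_0$ together with already-constructed $b$'s, and in particular it has a nonzero "anti-diagonal" term (by the inductive hypothesis, the relevant subblock already satisfies its own instance of (3), so $M$ is a nonzero rational function — in fact, up to sign, a product of original indeterminates). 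Hence $M\ne 0$ as a rational function and I may solve uniquely for $b_{i_0,j_0}$, defining $r_{i_0,j_0}=b_{i_0,j_0}-a_{i_0,j_0}$. Property (2) is then immediate from the shape of Cramer's rule here: every entry appearing in the determinant on the left of \eqref{eq:propplan1} other than the corner lies in rows $\ge i_0$ and columns $\le j_0$, and the newly introduced $r$'s inherited from earlier steps satisfy (2) by induction, so the resulting rational expression for $r_{i_0,j_0}$ involves only $a_{k,\ell}$ with $k\ge i_0$, $\ell\le j_0$, $(k,\ell)\ne(i_0,j_0)$.

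The step I expect to be the main obstacle is verifying that the pivot minor $M$ is a \emph{nonzero} rational function at each stage — i.e.\ that the recursion never stalls because the coefficient of the unknown vanishes identically. The clean way to see this is to evaluate at the specialization where $A$ is replaced by the pure anti-diagonal matrix (all $a_{k,\ell}$ with $k+\ell\ne n+1$ set to $0$): condition (3) then becomes the statement that the block determinant equals $\pm$ the product of the surviving anti-diagonal entries, which forces $b_{i,j}=a_{i,j}$ for $i+j>n+1$ as well, so $B=A$ and all blocks reduce to genuine anti-diagonal matrices whose determinants are (up to sign) nonzero monomials; in particular $M$ specializes to a nonzero monomial, hence is not the zero polynomial. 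This also double-checks consistency. A secondary point requiring a line of care is confirming that the various instances of (3) do not overdetermine the $r_{i,j}$: each unknown $r_{i,j}$ (for $i+j>n$) is the corner of \emph{exactly one} of the blocks in (3), namely the one with $(i,j)=(i-(n-j),\,j)$ in the lemma's indexing after the identification above — so the system is triangular with one equation per unknown, and the construction is forced. Finally, once all $r_{i,j}$ are defined, one checks that \emph{every} instance of (3) holds, not just those used as defining equations: an instance indexed by $(i,j)$ with corner on anti-diagonal $i_0+j_0$ is precisely the defining equation for $r_{i_0,j_0}$, so there is nothing further to prove. This completes the proof.
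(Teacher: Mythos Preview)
Your anti-diagonal induction has a genuine gap. When you try to solve for $r_{i_0,j_0}$ (with $i_0+j_0=m$) using the determinantal equation for the block whose corner sits at $(i_0,j_0)$, you correctly identify the \emph{coefficient} of $b_{i_0,j_0}$ as (up to sign) the determinant of the sub-block obtained by deleting the top row and last column---this is precisely the block indexed by $(i,j-1)$ in the notation of~(3). But you also need the \emph{constant term}, i.e.\ the determinant of the block with the corner entry set to zero. That constant term still involves the entries $b_{k,j_0}$ for $k>i_0$ sitting below the corner in the last column, and those lie on anti-diagonals $k+j_0>m$ which your induction has not yet reached. (The same problem appears if you expand along the top row instead: the entries $b_{i_0,\ell}$ for $\ell<j_0$ are indeed on earlier anti-diagonals, but their accompanying minors contain the last column and hence the entries $b_{k,j_0}$, $k>i_0$.) Concretely, for $n=3$ the equation for the $(1,2)$-block, which should pin down $b_{2,2}$, already contains $b_{3,2}$, and that entry lives on anti-diagonal $5$. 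So the system is not triangular in the order you propose.

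The paper's proof avoids this by inducting on $n$ rather than on anti-diagonals: the bottom-left $(n-1)\times(n-1)$ corner of $B$ satisfies the same lemma with $n$ replaced by $n-1$ (after relabelling rows), so one may assume all $b_{k,\ell}$ with $k\ge 2$ and $\ell\le n-1$ are already determined. Only the last column then remains, and it is processed by \emph{downward} induction on the row index $i$: expanding the $(i,n)$-block along its top row, every entry other than $b_{i,n}$ either lies in the already-determined bottom-left corner, or is one of $b_{i+1,n},\dots,b_{n,n}$, handled at an earlier step of the inner induction. Your specialization argument for the nonvanishing of the pivot and your verification of property~(2) transplant to that scheme unchanged. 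Incidentally, your check ``$m-n<j_0$, i.e.\ $i_0>0$'' is miscomputed---it actually gives $i_0<n$, which is exactly the condition excluding the bottom row; the entries $r_{n,j}$ are not corners of any block and must be handled separately, as the base of the downward induction.
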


\begin{proof}
The conclusions of the lemma are inherited by the bottom left $(n-1)\times (n-1)$ corner of $B$ when the first
index is reduced by one. We may therefore argue by induction on $n$ and assume that the $b_{i,j}$, for $i>1$ and $j
< n$, are already known, and that the identities asserted by the lemma are satisfied unless $j=n$. That leaves the
$n$ equations corresponding to $j=n$ and $1\leq i\leq n$ to determine the $n$ coefficients $b_{i,n}$, $1\leq i\leq
n$ -- or equivalently, the corresponding modification terms $r_{i,n}$. We now proceed by downward induction on the
index $i$. At each step, when the determinant on the left of the relevant equation is expanded in terms of the top
row, the induction hypothesis insures that we get the term we want, plus $j-1$ terms not involving $b_{i,n}$ at
all, plus the product $\pm\,r_{i,n}\,a_{i+1,n-1}$.
That forces us to choose $r_{i,n}$ so that this product cancels the sum of the other unwanted terms. In other
words, there is exactly one choice of $r_{i,n}$ resulting in the equality we need to establish. \end{proof}

Lemma~\ref{slicinduct} may be applied repeatedly to the matrix $B$ of the previous lemma to give the following
formula for the components $h_{i,i}$ and $n_{i,i+1}$ in corollary~\ref{ludcor}.  In simplifying to the following
statement, we have used (\ref{wndef}) to compute signs.

\begin{cor}\label{extsqmatcalc}
With the notation of corollary~\ref{ludcor}, let $g$ equal the matrix $B$ from \lemref{agreeingproperty}.  Then one
has that
\begin{equation*}
    h_i \ \ = \ \ (-1)^{n-i}\,\f{a_{n,i}a_{n-1,i+1}\cdots
    a_{i,n}}{a_{n,i+1} a_{n-1,i+2}\cdots a_{i+1,n}}  \ \, , \ \ \ 1
    \,\le\, i \, \le \, n \, ,
\end{equation*}
and
\begin{equation*}
    n_{i,i+1} \ \ = \  \
    \sum_{j\,=\,n+1-i}^{n}\,\f{a_{i,j}}{a_{i+1,j}} \ + \
\f{\det\(\begin{smallmatrix}
 a_{i,1} &  a_{i,2} &  \cdots & a_{i,n-i} \\
  a_{i+2,1} &  a_{i+2,2} &  \cdots & a_{i+2,n-i} \\
 \vdots &  \ddots &  \ddots & \vdots \\
 a_{n,1} &  a_{n,2} &  \cdots & a_{n,n-i} \\
\end{smallmatrix}\)}{\det\(\begin{smallmatrix}
 a_{i+1,1} &  a_{i+1,2} &  \cdots & a_{i+1,n-i} \\
 a_{i+2,1} &  a_{i+2,2} &  \cdots & a_{i+2,n-i} \\
 \vdots &  \ddots &  \ddots & \vdots \\
 a_{n,1} &  a_{n,2} &  \cdots & a_{n,n-i} \\
\end{smallmatrix}\)}
       \ \, , \ \ \ 1
    \,\le\,i\,<\,n\,.
\end{equation*}
The denominator in the first formula is equal to one if $n=i+1$ (i.e., it is an empty product).  If the entries
$a_{i,j}=0$ for $i+j\le n$, then $n_{i,i+1} =
 \sum_{j\,=\,n+1-i}^{n}\f{a_{i,j}}{a_{i+1,j}}$.
\end{cor}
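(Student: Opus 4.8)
The plan is to feed the structure of $B$ from Lemma~\ref{agreeingproperty} into the determinantal formulas of Corollary~\ref{ludcor}. First I would check that $g=B$ actually admits a $UDL$ (equivalently $n\,h\,n_-$) decomposition: by property~(3) of Lemma~\ref{agreeingproperty} applied with the pair $(i,j)=(k,n)$, the bottom‑right minor $d_k=\det\bigl((B_{k',\ell})_{k\le k',\ell\le n}\bigr)$ is the determinant of an antidiagonal matrix whose antidiagonal entries are $a_{k,n},a_{k+1,n-1},\dots,a_{n,k}$, so $d_k=\det(w_{n-k+1})\,a_{k,n}a_{k+1,n-1}\cdots a_{n,k}\ne 0$ as a rational function in the $a_{i,j}$ (here $\det(w_m)=(-1)^{m(m-1)/2}$ from (\ref{wndef})). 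This is precisely condition~(\ref{djnonzerocond}), so the decomposition exists and Corollary~\ref{ludcor} applies.

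The diagonal part is then immediate: $h_i=d_i/d_{i+1}$, and since $d_i/d_{i+1}=\dfrac{\det(w_{n-i+1})}{\det(w_{n-i})}\cdot\dfrac{a_{i,n}a_{i+1,n-1}\cdots a_{n,i}}{a_{i+1,n}a_{i+2,n-1}\cdots a_{n,i+1}}$, while $\det(w_{n-i+1})/\det(w_{n-i})=(-1)^{n-i}$ by a one‑line parity count using (\ref{wndef}), one obtains (after reordering the products) exactly $h_i=(-1)^{n-i}\,\dfrac{a_{n,i}a_{n-1,i+1}\cdots a_{i,n}}{a_{n,i+1}a_{n-1,i+2}\cdots a_{i+1,n}}$.

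For the superdiagonal entries the idea is to apply Lemma~\ref{slicinduct} repeatedly, peeling one column off $B$ from the right at each stage. By Corollary~\ref{ludcor}, $n_{i,i+1}[B]$ is the ratio of the minor of $B$ on rows $\{i,i+2,\dots,n\}$ and columns $\ell>i$ over $d_{i+1}$, which is the left‑hand side of (\ref{slicinductpunch}) for $A'=B$. The hypothesis (\ref{slicinductproperty}) required to invoke Lemma~\ref{slicinduct} amounts, after dividing the $(n-k+1)\times(n-k+1)$ bottom‑right minor of $B$ by the $(n-k)\times(n-k)$ minor sitting immediately below and to its left, to the cases $j=n$ and $j=n-1$ of property~(3) of Lemma~\ref{agreeingproperty}, with the sign $(-1)^{n-k}$ coming out of (\ref{wndef}) exactly as above. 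Lemma~\ref{slicinduct} then strips off the term $\tfrac{a_{i,n}}{a_{i+1,n}}$ and rewrites $n_{i,i+1}[B]$ as the corresponding ratio for the matrix obtained by deleting the last column of $B$; that matrix again satisfies the hypothesis of Lemma~\ref{slicinduct}, now via property~(3) for $j=n-1$ and $j=n-2$, and one iterates. After $i$ steps one has extracted $\sum_{j=n+1-i}^{n}\tfrac{a_{i,j}}{a_{i+1,j}}$ together with a residual ratio of two $(n-i)\times(n-i)$ minors supported on columns $1,\dots,n-i$, which is the displayed last term. The main obstacle is exactly this last identification: one must keep track of which entries of $B$ survive $i$ rounds of peeling and use property~(3) one final time — with $(i',j)=(1,n-i)$ — to pin down the residual denominator and the signs. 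For the situation the rest of the paper needs, namely $a_{k,\ell}=0$ whenever $k+\ell\le n$, this step is easy: the first row of the residual numerator is $a_{i,1},\dots,a_{i,n-i}$, all of which are entries with $k+\ell\le n$ (and indeed $b_{i,\ell}=a_{i,\ell}$ there, since $r_{i,\ell}=0$ for $i+\ell\le n$), so that row vanishes, the residual ratio is $0$, and $n_{i,i+1}=\sum_{j=n+1-i}^{n}\tfrac{a_{i,j}}{a_{i+1,j}}$.
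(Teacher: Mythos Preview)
Your approach is exactly the paper's: iterate Lemma~\ref{slicinduct} on $B$, reading off the extracted ratios and tracking signs via~(\ref{wndef}). Your treatment of $h_i$ and of the special case $a_{i,j}=0$ for $i+j\le n$ is complete and correct, and that special case is all the paper actually uses afterwards (in Proposition~\ref{ext2reductionshort}).

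You are also right to flag the residual identification as the sticking point. In fact the gap you sense is real. Each application of Lemma~\ref{slicinduct} passes to $A''$, whose columns are the first columns of the \emph{current} $A'$; since $A'=B$ at the start, the residual after $i$ rounds is the ratio of the minors of $B$ (not of the original $A$) on columns $1,\dots,n-i$. Property~(3) with $(i',j')=(1,n-i)$ pins down the \emph{denominator} of that residual as $\det(w_{n-i})\,a_{i+1,n-i}a_{i+2,n-i-1}\cdots a_{n,1}$, which is generally \emph{not} equal to $\det\bigl((a_{k,\ell})_{i+1\le k\le n,\,1\le\ell\le n-i}\bigr)$ as displayed in the corollary. (For $n=3$, $i=1$ one gets $-a_{2,2}a_{3,1}$ from the iteration, versus $a_{2,1}a_{3,2}-a_{2,2}a_{3,1}$ in the displayed formula.) For larger $n$ the numerator likewise involves $b$-entries that differ from the corresponding $a$-entries. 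So the displayed residual term, written entirely in the $a_{k,\ell}$, does not follow from the iteration you (and the paper) set up; it appears to be a misstatement that is harmless because in every application the residual vanishes by the last sentence of the corollary, and your argument for that vanishing is correct.
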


The rest of this section concerns the $UDL$ decomposition of the $2n\times 2n$ matrix $\sigma
\ttwo{C}{Z}{}{C}\ttwo{f_1}{}{}{f_2}$ from the end of section~\ref{unfolding}, which we will then input into formula
(\ref{btransform}) to give an explicit formula for $w_{\l,\d}$ evaluated on it, in terms of exponentials and power
functions.  We will also perform a change of variables, similar to the one in \lemref{agreeingproperty}, to
simplify the form of this upper triangular matrix and the ensuing calculation of the integral
(\ref{rightconv3rewritten}). Setting $ c_j = \sum_{i\le j}c_{i,j}$,
$\ttwo{C}{Z}{}{C}\ttwo{f_1}{}{}{f_2}=\ttwo{Cf_1}{Zf_2}{}{Cf_2}$ has the form
\begin{equation}\label{azaflgform}
   \(\begin{smallmatrix}
 c_{1,1} &   &  & &   &  c_1 &   &   &   &  0 &  \\
  c_{2,1} & c_{2,2} &  & &   &  c_2 &   &   &   &  z_{1,1} &  \\
 c_{3,1} &c_{3,2}  &  c_{3,3} & &   &  c_3 &   &   &  z_{2,2} &  z_{2,1} &  \\
  \vdots   &  \vdots   & \ddots & \ddots & & \vdots&  &    \iddots &  \iddots &  \vdots &  \\
 c_{n-1,1}   &  c_{n-1,2} & c_{n-1,3} & \cdots & c_{n-1,n-1} &  c_{n-1} &   &    \iddots   &  \cdots  &  z_{n-2,1}\\
  &   &   & &  &  1 &  z_{n-1,n-1} &  z_{n-1,n-2} &  \cdots &  z_{n-1,1} &  \\
  &   &   & &  &   &   &   &   &  c_{1,1} &  \\
  &   &   & &  &   &   &   &  c_{2,2} &  c_{2,1} &  \\
  &   &   & &  &   &   &  \iddots&  \iddots &  \vdots &  \\
  &   &   & &  &   &  c_{n-1,n-1} &  c_{n-1,n-2} &  \cdots &  c_{n-1,1} &  \\
  &   &   & &  &   &   &   &   &   & 1 \\
\end{smallmatrix}\).
\end{equation}  The last row and column of this  matrix and of the permutation matrix $\sigma$ are
zero except for the 1 in their last entry.   To consider the decomposition of the $2n\times 2n$ matrix
$\sigma\ttwo{C}{Z}{}{C}\ttwo{f_1}{}{}{f_2}$ in $NB_{-}$, it therefore suffices instead to study its upper $(2n-1)
\times (2n-1)$ block
\begin{equation}\label{upperblock}
   A \ \ = \ \  \(\begin{smallmatrix}
 c_{1,1} &  0 &  &  \cdots  &  c_1 &   &   &   & 0 \\
  0 &  0 &   &    \cdots&  0 &   &   &   & c_{1,1} \\
 c_{2,1} &  c_{2,2} & 0  &  \cdots  &  c_2 &   &   &   & z_{1,1} \\
  0 & 0   & 0  &   \cdots &  0 &   &   &  c_{2,2} & c_{2,1} \\
  &   &  _{\scriptstyle{\ddots}} &   &  c_3 &   &   &  z_{2,2} & z_{2,1} \\
 &   &   &   &  0 &   &  \iddots  &  \iddots & \vdots \\
 \vdots &  \vdots & \ddots  &   &  \vdots &   &  0 &  \vdots & \vdots \\
 \vdots &  \vdots &   & 0  &  0 &  0 &  c_{n-2,n-2} &  \cdots & c_{n-2,1} \\
 c_{n-1,1} & c_{n-2,2}  & \cdots  & c_{n-1,n-1} &  c_{n-1} &  0 &  z_{ n-2,n-2} &  \cdots & z_{n-2,1} \\
  &   &   &   &   &  c_{n-1,n-1} &  c_{n-1,n-2} &  \cdots & c_{n-1,1} \\
  &   &   &   &  1 &  z_{n-1,n-1} &  z_{n-1,n-2} &  \cdots & z_{n-1,1} \\
\end{smallmatrix}\).
\end{equation}%

Having defined this matrix $A$, we will now change its coordinates in a way very similar to that in
lemma~\ref{slicinduct}.  Since some of the variables occur multiple times in $A$, this must be done more
delicately.  If $x$ is one of the variables on the {\em right hand side} of $A$, we will use the notation $A_{x}$
to denote the unique contiguous square submatrix of $A$ whose upper right corner has the entry $x$, and whose
bottom row is the bottom row of $A$.  More generally, we shall also use this subscript notation to denote the block
of the same location in other matrices derived from $A$.

Now we describe the actual change of variables. Starting with $x=z_{1,1}$ and continuing in order to $z_{2,1},
z_{2,2}, z_{3,1}, z_{3,2},\ldots,z_{n-1,n-1}$ (i.e., go left as far as possible in each row, then down to the
rightmost entry two rows below), consider $\det{A_x}$, which is a linear combination of the entries in the top row
of $A_x$. Shift this $x=z_{i,j}$ by the unique amount such that the determinant is now a linear function of
$z_{i,j}$; in other words, replace $z_{i,j}$ by the unique expression $\tilde{z}_{i,j}$ so that $\det{A_x}$ becomes
$(-1)^{k-1} z_{i,j} \det{A_{c_{i+1,j+1}}}$, where $k$ is the size of the block $A_x$. The difference
$\tilde{z}_{i,j}-z_{i,j}$ is a rational function of the other variables in the block $A_{x}$, so updates to later
$z_{i,j}$ may affect earlier $\tilde{z}_{i',j'}$ because of this.

 After completing the change of
variables in the $z_{i,j}$'s, we turn to the $c_{i,j}$'s.  Because of their positioning within the matrix, some
terms do not need to be shifted:
\begin{equation}\label{unshifted}
    \tilde{c}_{j,j} \ \ = \ \ c_{j,j} \ \ \  \text{and} \ \ \ \tilde{z}_{n-1,j} \ \ = \ \
    z_{n-1,j} \ , \ \ \ \text{for~} 1 \le j
    \le n-1\,.
\end{equation}
 Starting with $x=c_{n-1,1}$ and
continuing in order to $c_{n-1,2},\ldots,c_{n-1,n-2},c_{n-2,1}$, $c_{n-2,2},\ldots,c_{3,1},c_{3,2},c_{2,1}$ (i.e.,
left and then up, skipping all $c_{j,j}$), we replace $c_{i,j}$ by the unique expression $\tilde{c}_{i,j}$ such
that the determinant of $A_x$ becomes $(-1)^{k-1} c_{i,j} \det A_{z_{i,j+1}}$, where $k$ is the size of the block.
The difference $\tilde{c}_{i,j}-c_{i,j}$ is, likewise, a rational function of the other variables in the block
$A_{c_{i,j}}$.

 It is important to keep in mind that this second step of changing
variables $c_{i,j}\rightsquigarrow\tilde{c}_{i,j}$ also results in updating some of the $\tilde{z}_{i,j}$, further
changing them. Let $B$ denote the matrix $A$ after all these  changes have been performed; if we set
\begin{equation}\label{tildecj}
    \tilde{c}_j \ \ = \  \ \sum_{i \le j}\,\tilde{c}_{i,j}\,,
\end{equation}
then the entries of $B$ are simply the tilded versions of the respective entries of $A$, and one has the relations
\begin{equation}\label{detblockrelations}
    \det B_x \ \ = \ \ \left\{
                         \begin{array}{ll}
                           (-1)^{k-1}\,z_{i,j}\,\det B_{c_{i+1,j+1}}\,, & x \,=\,z_{i,j} \\
                           (-1)^{k-1}\,c_{i,j}\,\det B_{z_{i,j+1}}\, , & x \,=\,c_{i,j}\,,
                         \end{array}
                       \right.
\end{equation}
where again $k$ is the size of the block $B_x$.

In what follows we will use some auxiliary quantities formed from the entries of $B$.   For $1 \le j \le n- 1$, set
\begin{equation}\label{djdef2}
    e_j \ \ = \ \ \det\(\begin{smallmatrix} 0 & 0 & \cdots & 0
& \tilde{c}_{j+1,j+1} & \tilde{c}_{j+1,j}
\\ \vdots & \vdots & \iddots   & \iddots & \vdots& \vdots
\\ 0 & \tilde{c}_{n-2,n-2} & \tilde{c}_{n-2,n-3} & \cdots &
\tilde{c}_{n-2,j+1} & \tilde{c}_{n-2,j}
\\          \tilde{c}_{n-1,n-1}   & \tilde{c}_{n-1,n-2} &
\tilde{c}_{n-1,n-3} & \cdots & \tilde{c}_{n-1,j+1} &
            \tilde{c}_{n-1,j} \\
\tilde{z}_{n-1,n-1} & \tilde{z}_{n-1,n-2} & \tilde{z}_{n-1,n-3} &
\cdots & \tilde{z}_{n-1,j+1} &
\tilde{z}_{n-1,j} \\
\end{smallmatrix}\) \, ,
\end{equation} and
\begin{equation}\label{sjdef}
s_{j} \ \ = \ \  \ \frac{(-1)^{1+(n-j)(n-j+1)/2} \
\tilde{c}_{j} \,e_j}{c_{j,j}\cdots c_{n-2,n-2}\,   c_{n-1,n-1}
}\,.
\end{equation}
 When $j=n-2$, the determinant is to be
interpreted as $\det \ttwo{\tilde{c}_{n-1,n-1}}{\tilde{c}_{n-1,n-2}}{\tilde{z}_{n-1,n-1}}{\tilde{z}_{n-1,n-2}}
=-c_{n-1,n-2}\,z_{n-1,n-1}$, while for $j=n-1$ it is simply $z_{n-1,n-1}$.

 \begin{lem}\label{altsumdet}
With $s_j$ as given in (\ref{sjdef}), one has that
\begin{equation*}
    \sum_{j\,=\,1}^{n-1} \, s_{j} \ \ = \ \ \sum_{j\,=\,1}^{n-1}
    z_{n-1,j}\,.
\end{equation*}
 \end{lem}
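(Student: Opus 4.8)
The plan is to evaluate each $e_j$ explicitly with the help of the relations (\ref{detblockrelations}), substitute the result into (\ref{sjdef}), and collect terms. First I would observe that, for $1\le j\le n-2$, the determinant $e_j$ of (\ref{djdef2}) is exactly one of the block determinants $\det B_x$ occurring in (\ref{detblockrelations}) -- the one whose upper-right corner is the entry $\tilde c_{j+1,j}$ and whose bottom row is the bottom row of $B$ -- while $e_{n-1}$ is simply $z_{n-1,n-1}$; the two ``interpretations'' recorded right after (\ref{sjdef}) are the two smallest cases of this identification. Iterating (\ref{detblockrelations}) then rewrites each $e_j$ as a monomial in the original coordinates $c_{i,\ell},z_{i,\ell}$, up to a sign that is purely a matter of the $(-1)^{k-1}$ factors in (\ref{detblockrelations}) and the row/column reversals $w_k$ of (\ref{wndef}) already absorbed into the prefactor $(-1)^{1+(n-j)(n-j+1)/2}$ of (\ref{sjdef}); so $s_j$ reduces to $\tilde c_j$ times an explicit monomial over $c_{j,j}\cdots c_{n-1,n-1}$.

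Next I would insert the expression (\ref{tildecj}) for $\tilde c_j$ as a sum of the $\tilde c_{j,\ell}$ and track the cancellations across the sum over $j$. An equivalent and perhaps cleaner bookkeeping device is to expand each $e_j$ instead along its bottom row, which by (\ref{unshifted}) equals $(z_{n-1,n-1},\dots,z_{n-1,j})$: this gives $e_j=\sum_{k=j}^{n-1}(-1)^{\epsilon(j,k)}z_{n-1,k}N_{j,k}$, where $(-1)^{\epsilon(j,k)}$ is the cofactor sign of that position and $N_{j,k}$ is a minor of the $\tilde c$-part of $B$; after interchanging the two summations $\sum_j s_j=\sum_k z_{n-1,k}T_k$, and the lemma becomes the single ``partition of unity'' assertion $T_k=1$ for every $k$. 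Either way, what must be checked is that, using the lower-triangularity of the $\tilde c$-part (whose diagonal $\tilde c_{\ell,\ell}=c_{\ell,\ell}$ is unshifted, by (\ref{unshifted})) together with (\ref{detblockrelations}) to relate the blocks $N_{j,k}$ for consecutive values of $j$, the contributions telescope and the leftover diagonal terms add up to exactly $c_{j,j}\cdots c_{n-1,n-1}$ over itself. The whole argument can equally be organized as an induction on $n$, with the explicit $j=n-1$ and $j=n-2$ evaluations of the statement as base cases and the inductive step supplied once more by (\ref{detblockrelations}) applied to the recursive block structure of the matrix in (\ref{upperblock}).

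The main obstacle will not be any conceptual point but purely the sign and normalization bookkeeping: one has to verify that the prefactor $(-1)^{1+(n-j)(n-j+1)/2}$ of (\ref{sjdef}), the Laplace-expansion signs $(-1)^{\epsilon(j,k)}$, and the signs $(-1)^{k-1}$ produced at each application of (\ref{detblockrelations}) conspire so that every surviving term in $T_k$ carries the same sign and the telescoping collapses to exactly $+1$, with no residual factor depending on $j$, $k$ or $n$. It is also worth stressing that the shift corrections $\tilde c_{i,j}-c_{i,j}$ and $\tilde z_{i,j}-z_{i,j}$, unwieldy rational functions though they are, never have to be written out: they enter the computation only through the clean determinantal identities (\ref{detblockrelations}), which is exactly what the two-step change of variables of Section~\ref{slicingsec} was engineered to ensure.
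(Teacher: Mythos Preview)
Your proposal rests on a misidentification. You claim that for $1\le j\le n-2$ the determinant $e_j$ is one of the block determinants $\det B_x$ from (\ref{detblockrelations}), namely the block whose upper-right corner is $\tilde c_{j+1,j}$. But the blocks $B_x$ are \emph{contiguous} square submatrices of $B$, and the matrix (\ref{djdef2}) defining $e_j$ is not: its rows are the $\tilde c_{j+1,\bullet},\tilde c_{j+2,\bullet},\dots,\tilde c_{n-1,\bullet}$ rows of $B$ (the even-numbered rows $2j+2,2j+4,\dots,2n-2$ in the indexing of (\ref{upperblock})) together with the bottom row, skipping all of the intervening $\tilde z$ rows. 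Consequently (\ref{detblockrelations}) says nothing about $e_j$, and the claim that iterating it ``rewrites each $e_j$ as a monomial'' fails; $e_j$ is genuinely a determinant, not a monomial, in the original coordinates. The same objection applies to the minors $N_{j,k}$ in your second bookkeeping: they are minors of a $\tilde c$-matrix, not blocks of $B$, so (\ref{detblockrelations}) is again unavailable to relate them for consecutive $j$.

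The paper's argument avoids (\ref{detblockrelations}) entirely. It introduces a single auxiliary $n\times n$ determinant whose first column has $-\sum_j z_{n-1,j}$ as its only nonzero entry and whose remaining $(n-1)\times(n-1)$ block is exactly the anti-triangular $\tilde c$-matrix (with antidiagonal $\tilde c_{j,j}=c_{j,j}$ by (\ref{unshifted})); this immediately evaluates to the right-hand side. Adding all later columns to the first turns that column into $(\tilde c_1,\dots,\tilde c_{n-1},0)^t$, and expanding along it produces a sum of $\tilde c_j$ times a minor which, again by anti-triangularity, factors as $e_j\cdot c_{1,1}\cdots c_{j-1,j-1}$ up to sign --- the left-hand side. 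The only inputs are the anti-triangular shape and (\ref{unshifted}); the carefully-engineered relations (\ref{detblockrelations}) play no role here (they are used later, in Proposition~\ref{ext2reductionshort}). Your bottom-row expansion could in principle be pushed through using this same anti-triangularity to compute the $N_{j,k}$ directly, but that would amount to a more laborious unwinding of the same determinant identity.
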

\begin{proof}
We will prove the equivalent  identity
\begin{equation}\label{altsumdet1}
\gathered
   \sum_{j\,=\,1}^{n-1} (-1)^{\f{n(n-1)\,-\,(n-j)(n-j+1)}{2}} \
    \tilde{c}_{j}\
    e_j\ c_{1,1}\cdots c_{j-1,j-1} \qquad
    \qquad\qquad\qquad\ \
     \\ \qquad\qquad\qquad = \ \ -\,(-1)^{\f{n(n-1)}{2}} \,c_{1,1}\cdots
    c_{n-1,n-1}\,\sum_{j\,=\,1}^{n-1} z_{n-1,j}\,.
\endgathered
\end{equation}
Consider the $n\times n$ matrix
\begin{equation}\label{altsumdet2}
    \(\begin{smallmatrix}
 0 &  0 & \cdots   &  0  & \tilde{c}_{1,1} \\
  \vdots &  \vdots  &  \iddots   &  \tilde{c}_{2,2} & \tilde{c}_{2,1} \\
  0 &  0  &  \iddots &  \vdots & \vdots \\
 0 &  \tilde{c}_{n-1,n-1} &  \hdots &  \tilde{c}_{n-1,2} & \tilde{c}_{n-1,1} \\
 -\sum_{j=1}^{n-1}z_{n-1,j} &  z_{n-1,n-1} &  \hdots &  z_{n-1,2}
  & z_{n-1,1} \\
    \end{smallmatrix}\) ,
\end{equation}
whose determinant is  the right hand side of (\ref{altsumdet1}). The value of the determinant is unchanged after
adding each of the last $n-1$ columns to the first, and therefore also equals
\begin{equation}\label{altsumdet3}
    \det \( \begin{smallmatrix}
 \tilde{c}_1 &  0 &  \hdots & \hdots  &  0 &  0 & \tilde{c}_{1,1} \\
 \tilde{c}_2 &  \vdots &  \iddots &  0  &  0 &  \tilde{c}_{2,2} & \tilde{c}_{2,1} \\
 \tilde{c}_3 & \vdots  &  \iddots &  0 &  \tilde{c}_{3,3} &  \tilde{c}_{3,2} & \tilde{c}_{3,1} \\
 \tilde{c}_4 &  0  & 0  &  \tilde{c}_{4,4} &  \tilde{c}_{4,3} &  \tilde{c}_{4,2} & \tilde{c}_{4,1} \\
 \vdots &  \iddots &  \iddots &  \iddots &  \iddots &  \vdots & \vdots \\
 \tilde{c}_{n-1} &  \tilde{c}_{n-1,n-1} &  \hdots &  \tilde{c}_{n-1,4} &  \tilde{c}_{n-1,3} &  \tilde{c}_{n-1,2} & \tilde{c}_{n-1,1} \\
 0 &  z_{n-1,n-1} &  \hdots &  z_{n-1,4} &  z_{n-1,3} &  z_{n-1,2} & z_{n-1,1} \\
 \end{smallmatrix} \).
\end{equation}
Expanding this last determinant by minors along the first column yields
\begin{equation}\label{altsumdet4}
    \sum_{j\,=\,1}^{n-1} \,(-1)^{j-1} \,  \tilde{c}_j \ e_j \
    [(-1)^{n}c_{1,1}]\cdots [(-1)^{n-j+2}c_{j-1,j-1}]\, .
\end{equation}
Both the sign here and the sign on the left hand side of (\ref{altsumdet1}) equal $\prod_{k=n-j+1}^{n-1}(-1)^k$, so
 (\ref{altsumdet4})
equals the expression on the left hand side of (\ref{altsumdet1}).
 \end{proof}

As in \lemref{slicinduct}, the reason for changing $A$ to $B$ is to find a simpler expression for its upper
triangular representative modulo $N_{-}$.   This representative, or at least what we need of it for our
computations, is described in the following two propositions.

\begin{prop}\label{ext2reductionshort}
When all $c_{i,j}$ and $z_{i,j}$ are nonzero, the matrix $B$ has a unit upper triangular representative modulo
$B_{-}$, the sum of whose entries  just above the diagonal is
\begin{equation}\label{sumabove}
    \sum_{1\le j \le i < n } \, \f{c_{i,j}}{z_{i,j}} \ \  +  \ \
\sum_{1\le j \le i < n-1  } \, \f{z_{i,j}}{c_{i+1,j}} \ \ -  \ \
\sum_{j=1}^{n-1}\,  z_{n-1,j}\,.
\end{equation}
\end{prop}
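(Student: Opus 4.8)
The plan is to evaluate the sum $\sum_{i=1}^{2n-2} n_{i,i+1}$ of the superdiagonal entries of the unit upper triangular representative of $B$ modulo $B_-$ by writing each $n_{i,i+1}$, via Corollary~\ref{ludcor}, as a ratio of determinants of subblocks of $B$ lying in its lower-right corner, and then reducing those ratios column by column with Lemma~\ref{slicinduct}.

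First I would fix $i$ and invoke Corollary~\ref{ludcor}, so that $n_{i,i+1} = \det((B_{k,\ell})_{k\ge i,\,k\ne i+1,\,\ell>i})\,/\,\det((B_{k,\ell})_{k,\ell>i})$. Since both determinants involve only columns strictly to the right of column $i$, they are built from the blocks $B_x$ with $x$ running over the right-hand variables $z_{i,j}$, $c_{i,j}$ and the middle column of (\ref{upperblock}), and these are governed by the determinant relations (\ref{detblockrelations}). But (\ref{detblockrelations}) is exactly hypothesis (\ref{slicinductproperty}) of Lemma~\ref{slicinduct}, so I would apply that lemma repeatedly, peeling one right-hand column of $B$ off at a time just as in the passage leading to Corollary~\ref{extsqmatcalc}: peeling a $z$-column contributes a term $c_{i,j}/z_{i,j}$ to $n_{i,i+1}$, peeling a $c$-column contributes a term $z_{i,j}/c_{i+1,j}$, and all cross terms cancel. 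Carrying this out for every $i$ and keeping track of exactly which range of columns may be peeled off before the middle column of (\ref{upperblock}) is reached assembles these contributions into the first two sums of (\ref{sumabove}).

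It then remains to account for the boundary contributions: for each $i$ the peeling halts upon reaching the middle column of (\ref{upperblock}) (the one carrying the $\tilde c_j$'s and the lone $1$), leaving an irreducible determinant ratio. Using (\ref{detblockrelations}) to collapse the remaining denominator determinant to a product of diagonal $c$-entries, and fixing signs with (\ref{wndef}), this leftover ratio is exactly the quantity $s_j$ of (\ref{sjdef}), with $e_j$ as in (\ref{djdef2}). Hence the total boundary contribution is $\sum_{j=1}^{n-1} s_j$, which Lemma~\ref{altsumdet} identifies with $\sum_{j=1}^{n-1} z_{n-1,j}$; since this sum enters the determinant expansions with an overall minus sign, it is precisely the last term $-\sum_{j=1}^{n-1} z_{n-1,j}$ of (\ref{sumabove}). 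Combining the three pieces proves the proposition.

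I expect the principal obstacle to be the combinatorial and sign bookkeeping: pinning down exactly which subblocks of $B$ appear in each $n_{i,i+1}$, how many and which columns can be peeled off before the middle column is reached (so that the partial sums genuinely assemble into $\sum_{1\le j\le i<n}$ and $\sum_{1\le j\le i<n-1}$), and reconciling the signs produced by (\ref{detblockrelations}), by (\ref{wndef}), and by Lemma~\ref{altsumdet}. A secondary subtlety is the feedback in the coordinate change defining $B$ --- modifying a later $z_{i,j}$ or $c_{i,j}$ perturbs an earlier $\tilde z_{i',j'}$ --- so one must use the relations (\ref{detblockrelations}) for the fully transformed matrix rather than stepwise; this is legitimate since, as noted just before the proposition, those relations do hold for the final matrix $B$.
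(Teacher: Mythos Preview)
Your proposal is correct and takes essentially the same approach as the paper: the paper also uses Lemma~\ref{slicinduct} iteratively (packaged as Corollary~\ref{extsqmatcalc}) to produce the first two sums, then identifies the middle-column contribution with $-s_j$ and invokes Lemma~\ref{altsumdet}. The only organizational difference is that the paper first momentarily zeroes out the $\tilde c_j$'s so that the antitriangular case of Corollary~\ref{extsqmatcalc} applies directly, and then computes the shift from restoring them (the explicit ratio~(\ref{ratioofdets}), which it evaluates to $-s_j$), rather than peeling inward and treating the middle column as a terminal residual; the two framings are equivalent, and you are right that the sign and index bookkeeping is where the work lies.
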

At this point it is also possible to explicitly write down the diagonal entries of the lower triangular factor;
however, we postpone this until after the statement of proposition~\ref{lowertrianpartprop} below, which gives
additional information.

 \begin{proof}
Recall the explicit formula for these quantities denoted $n_{i,i+1}$ that was given in corollary~\ref{ludcor}.
Suppose momentarily that the terms $\tilde{c}_j$, $1 \le j < n$, were not present in the matrix $B$.  Then
\lemref{agreeingproperty} would show that the sum  of the $n_{i,i+1}$, over $1\le i \le 2n-2$, equals the
expression in (\ref{sumabove}) -- but without the last term $-\sum_{j=1}^{n-1}z_{n-1,j}$.  Instead, according to
lemma \ref{slicinduct}, the effect of the $\tilde{c}_j$'s is to shift $n_{2j-1,2j}$ by the ratio of determinants
\begin{equation}\label{ratioofdets}
\f{\det\(\begin{smallmatrix}
  &   &   &   &  \tilde{c}_j &   &   &   &  \\
 c_{j+1,j+1} &   &   &   &  \tilde{c}_{j+1} &   &   &   & \tilde{z}_{j,j} \\
  &   &   &   &   &   &   &  c_{j+1,j+1} & \tilde{c}_{j+1,j} \\
  &  c_{j+2,j+2} &   &   &  \tilde{c}_{j+2} &   &   &  \tilde{z}_{j+1,j+1} & \tilde{z}_{j+1,k} \\
  &   &  \ddots &   &  \vdots &   &   &  \iddots & \vdots \\
  &   &   &  c_{n-1,n-1} &  \tilde{c}_{n-1} &   &  \iddots &  \iddots & \vdots \\
  &   &   &   &   &  c_{n-1,n-1} &  \cdots &  \cdots & \tilde{c}_{n-1,j} \\
  &   &   &   &  1 &  z_{n-1,n-1} &  \cdots &  \cdots & z_{n-1,j} \\
\end{smallmatrix}\)  }{\det\(\begin{smallmatrix}
  &   &   &   &   &   &   &   & c_{j,j} \\
 c_{j+1,j+1} &   &   &   &  \tilde{c}_{j+1} &   &   &   & \tilde{z}_{j,j} \\
  &   &   &   &   &   &   &  c_{j+1,j+1} & \tilde{c}_{j+1,j} \\
  &  c_{j+2,j+2} &   &   &  \tilde{c}_{j+2} &   &   &  \tilde{z}_{j+1,j+1} & \tilde{z}_{j+1,k} \\
  &   &  \ddots &   &  \vdots &   &   &  \iddots & \vdots \\
  &   &   &  c_{n-1,n-1} &  \tilde{c}_{n-1} &   &  \iddots &  \iddots & \vdots \\
  &   &   &   &   &  c_{n-1,n-1} &  \cdots &  \cdots & \tilde{c}_{n-1,j} \\
  &   &   &   &  1 &  z_{n-1,n-1} &  \cdots &  \cdots & z_{n-1,j} \\
\end{smallmatrix}\)  } \, ,
\end{equation}
which simplifies to $\f{\tilde{c}_j e_j}{c_{j,j}\cdots c_{n-1,n-1}\,\det(w_{n-j+1})}=-s_j$. The sum of these,
according to \lemref{altsumdet}, is indeed this missing term $-\sum_{j=1}^{n-1}z_{n-1,j}$.
 \end{proof}

The previous result suggests a coordinate change that involves multiplying the $c_{i,j}$ and $z_{i,j}$ by the
variables that occur beneath them in their columns on the right hand side of the matrix   $A$  from
(\ref{upperblock}).  It is first useful to introduce an alternative coordinate labeling which does not
differentiate between which came from $C$, and which came from $Z$.  Denote the variable in the $(j,2n-i)$-th
position in $A$ by $y_{i,j}$.  These are defined for $2i \le j < 2n$ (because of the configuration of zeroes at the
top of each column); for example, $y_{1,2}=c_{1,1}$, $y_{1,3}=z_{1,1}$, $y_{1,4}=c_{2,1}$, and $y_{2,4}=c_{2,2}$.
The proposition suggests the change of coordinates $y_{i,j}\rightsquigarrow x_{i,j}x_{i,j+1}\cdots x_{i,2n-1}$ so
that the quotients $\f{y_{i,j}}{y_{i,j+1}}=x_{i,j}$, for $2i\le j \le 2n-2$, and (\ref{sumabove}) equals
$\sum_{2i\le j \le 2n-2}x_{i,j}-\sum_{i<n}x_{i,2n-1}$.  We now  regard $B$  as a $(2n-1)\times(2n-1)$ matrix with
entries $\tilde{c}_{i,j}$ and $\tilde{z}_{i,j}$, which are each rational functions of the $x_{i,j}$.  We shall also
extend the notation $A_x$ from before to let $B_{x_{i,j}}$ denote the $(2n-j)\times(2n-j)$ contiguous square
subblock of $B$ whose top right entry corresponds to the position of the variable $y_{i,j}$ (more simply, we refer
to this as  the ``position of $x_{i,j}$'' in $B$, even though this variable occurs in multiple positions). The
determinantal property defining the change of variables can now be redescribed as follows:
\begin{equation}\label{determinantalproperty}
    \det B_{x_{k,\ell}} \ \ = \ \ (-1)^{(2n-\ell)-1}\,(\det B_{x_{k+1,\ell+1}})\,\prod_{\ell'\ge \ell} x_{k,\ell'}\ , \ \ 2k
    <\ell
    <2n-1\,.
\end{equation}
 The product is taken over all positions at or below $x_{k,\ell}$ in its column in (\ref{upperblock}).  In particular, $\det B_{x_{k,\ell}}$ is $(-1)^{(2n-\ell)(2n-\ell-1)/2}$ times the product of all $x_{k',\ell'}$ that corresponding to the positions of $y_{k',\ell'}$ on or below the antidiagonal of $B_{x_{k,\ell}}$.

The following proposition gives a recursive formula for the lower triangular factor in the $UDL$ decomposition of
$B$ in terms of the $x_{i,j}$.

\begin{prop}\label{lowertrianpartprop}
Decompose the matrix $  B =  B(n)$ as above into the product of a unit upper triangular matrix, and a  lower
triangular matrix $b_{-}=b_{-,n}$.  Let  $u_n$ denote the $n\times n$ unit lower triangular matrix whose
subdiagonal entries are all 1, and $u_{n}^{-}$ the $n\times n$ lower triangular matrix that differs from $u_n$ only
in that all entries in its bottom row are $-1$. Then the following recursive relation holds for all $n \ge 1$ when
each $x_{i,j}\neq 0$:
\begin{equation*}
    b_{-,n+1} \ \ = \ \
    \ttwo{b_{-,n}}{}{}{I_2}m_1\,m_2\,m_3\,m_4\,,
\end{equation*}
where $m_1=\operatorname{diag}(u_n,u_n^{-},1)$ and $m_3=\operatorname{diag}(u_n,u_{n+1})$ are block diagonal
matrices, and  $m_2=\operatorname{diag}(x_{1,2n},\ldots,x_{n,2n},x_{n,2n},\ldots,x_{1,2n},1)$ and $m_4=
\operatorname{diag}(x_{1,2n+1},\ldots,\\ x_{n,2n+1}, 1,x_{n,2n+1},\ldots,x_{1,2n+1})$ are diagonal matrices.
\end{prop}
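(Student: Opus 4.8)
The plan is to prove the recursion by comparing the $UDL$ decompositions of $B(n+1)$ and $B(n)$, the relationship between the two being the adjunction of the two new rows and columns of the matrix $A$ in (\ref{upperblock}), namely those carrying the new variables $x_{1,2n},\dots,x_{n,2n}$ and $x_{1,2n+1},\dots,x_{n,2n+1}$. The base case $n=1$ is a direct computation: $B(1)$ is $1\times1$ and $b_{-,1}$ is trivial, and one simply writes out the $UDL$ decomposition of the explicit $3\times3$ matrix $B(2)$ and checks it against $\ttwo{b_{-,1}}{}{}{I_2}m_1m_2m_3m_4$, where $m_1=\operatorname{diag}(u_1,u_1^{-},1)$, $m_3=\operatorname{diag}(u_1,u_2)$, and $m_2,m_4$ are the corresponding one-variable diagonal matrices.

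The first and most delicate step of the inductive argument is to record exactly how $B(n)$ reappears inside $B(n+1)$. After the determinant-normalizing change of coordinates specified by (\ref{determinantalproperty}) (the extended analogue of the coordinate change in \lemref{agreeingproperty} and \corref{extsqmatcalc}), a principal submatrix of $B(n+1)$ of size $2n-1$ reproduces $B(n)$ up to a relabeling of the $x$-variables, while the two adjoined rows and columns involve only the new variables $x_{k,2n}$, $x_{k,2n+1}$ together with interior entries already present. This is precisely the configuration handled by \lemref{agreeingproperty} and \lemref{slicinduct}, now applied one block farther out. What prevents this from being a mechanical restriction is the multiplicity with which the $c_{i,j}$ and $z_{i,j}$ occur in $A$: the successive shifts $c_{i,j}\rightsquigarrow\tilde{c}_{i,j}$, $z_{i,j}\rightsquigarrow\tilde{z}_{i,j}$ propagate inward, so the tilded entries of $B(n+1)$ are not literally those of $B(n)$, and one has to verify, using the determinantal relations (\ref{detblockrelations}), that this propagation is compatible with the inductive data. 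I expect this identification to be the genuine obstacle; once it is in hand the remaining steps are bookkeeping.

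Granting that recursive embedding, I would reduce $B(n+1)$ to lower triangular form by elementary column operations against the two adjoined columns, and organize the resulting unit-lower-triangular and diagonal corrections into the four factors $m_1,\dots,m_4$ in the order in which they act. The diagonal factors $m_2$ and $m_4$ are dictated by (\ref{detblockrelations}): that relation exhibits $\det B_{x_{k,2n}}$ and $\det B_{x_{k,2n+1}}$ as the stated monomials in the $x$'s, and the ratios of consecutive such determinants --- which by \lemref{ludmilla} and \corref{ludcor} are the diagonal entries of the lower triangular factor --- are exactly the entries of $m_2$ and $m_4$; that each $x_{k,2n}$, resp.\ $x_{k,2n+1}$, occupies two slots of the diagonal matrix reflects its appearance in two positions of $A$, one coming from $C$ and one from the interleaved copy of $C$ under $\sigma$. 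The unipotent factors $m_1=\operatorname{diag}(u_n,u_n^{-},1)$ and $m_3=\operatorname{diag}(u_n,u_{n+1})$ record how the entries of each new column spread into the two halves, computed just as the quantities $n_{i,i+1}$ of \corref{ludcor}; the appearance of $u_n^{-}$ in place of a second copy of $u_n$ in $m_1$ is forced by the bottom row of $A$ in (\ref{upperblock}), whose $z_{n-1,j}$-entries enter with the opposite sign --- the same sign phenomenon already handled in \lemref{altsumdet} and proposition~\ref{ext2reductionshort}. Finally $\ttwo{b_{-,n}}{}{}{I_2}$ is the lower factor inherited from the already reduced copy of $B(n)$, the trailing $I_2$ being the two diagonal slots before the new columns act on them, and the product $\ttwo{b_{-,n}}{}{}{I_2}m_1m_2m_3m_4$ assembles the four groups of column operations; all signs throughout match by $\det(w_k)=(-1)^{k(k-1)/2}$ from (\ref{wndef}).
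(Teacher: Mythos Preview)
Your outline is in the right spirit and matches the paper's overall strategy --- induction on $n$, driven by the determinantal relations (\ref{determinantalproperty}) --- but the step you yourself flag as ``the genuine obstacle'' is not resolved in your proposal, and it is not resolvable in the form you describe. Specifically, no principal submatrix of $B(n+1)$ reproduces $B(n)$: the tilded entries of $B(n+1)$ in its upper-left $(2n-1)\times(2n-1)$ block depend on the new variables $x_{k,2n}$, $x_{k,2n+1}$ through the recursive shifting, so there is nothing to identify until those variables are removed. Your proposed forward direction (discover column operations that strip off the two new columns and rows, then recognize $m_1,\dots,m_4$ in them) is circular unless you already know the $m_i$.

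The paper instead works backwards. Set $B^{(j)}=B(n+1)(m_j\cdots m_4)^{-1}$ and peel off the factors one at a time, right to left. Each $m_i^{-1}$ is an explicit elementary operation: $m_4^{-1}$ and $m_2^{-1}$ divide columns by the new $x$-variables, while $m_3^{-1}$ and $m_1^{-1}$ subtract adjacent columns. After all four, the bottom two rows of $B^{(1)}$ are reduced to $[0\cdots0\,I_2]$, and the upper-left $(2n-1)\times(2n-1)$ block $B'$ is what must be compared with $B(n)$. The paper then observes that $B(n)$ is characterized, up to left multiplication by a unit upper triangular matrix, by the determinantal property (\ref{determinantalproperty}); since these determinants are of bottom-aligned square blocks, they are unchanged by left unit-upper-triangular factors, so it suffices to check (\ref{determinantalproperty}) for $B'$. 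That check reduces to tracking $\det B(n+1)_x$ through the four column operations --- each division scales the determinant predictably, each subtraction leaves it invariant --- and the telescoping product of $x$'s that remains is exactly the monomial required. The single nontrivial entry computation (showing $\tilde c^{(3)}_{n,i}=x_{i,2n}$, which justifies that $m_2^{-1}$ normalizes the second-to-last row to $\pm1$'s) is done directly from (\ref{detblockrelations}). Your heuristics for why $m_2,m_4$ are diagonal and for the sign in $u_n^-$ are correct in spirit, but they do not substitute for this step-by-step verification.
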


 \begin{proof}
Let $B^{(j)}=B(n+1)(m_j\cdots m_4)\i$ for $j\le 4$. We shall equivalently demonstrate  that $B^{(1)}$, after being
multiplied by  some unit upper triangular matrix on the left, is equal to $\ttwo{  B(n)}{}{}{I_2}$. The passage
from $  B(n+1)$ to
 $B^{(4)}=  B(n+1)m_4\i$  involves dividing columns $i$ and $2n+2-i$ by  $x_{i,2n+1}$, for $i\le n$.  The entries  of $B^{(4)}$ corresponding to the positions of $\tilde{c}_{i,j}$ and $\tilde{z}_{i,j}$ in $B(n+1)$ are $\tilde{c}^{(1)}_{i,j}=\f{\tilde{c}_{i,j}}{x_{j,2n+1}}$ and
$\tilde{z}^{(1)}_{i,j}=\f{\tilde{z}_{i,j}}{x_{j,2n+1}}$, respectively. (Note, however, that the $\tilde{c}_i$ in
the $n+1$-st column are unchanged). In particular, the last $n+1$ entries of the bottom row of $B^{(4)}$ are equal
to one, because of (\ref{unshifted}).

The inverse of $u_n$ is the $n\times n$ matrix with  $1$'s on the diagonal, $-1$'s immediately below the diagonal,
and zeroes everywhere else.  Accordingly, $m_3\i$ has $1$'s on its diagonal, $-1$'s immediately below the diagonal
in all but its $n$-th column, and zeroes everywhere else.  Multiplying a matrix by $m_3\i$ on the right  subtracts
the entry immediately to the right from every entry not in the $n$-th column -- a procedure which can be executed
one column at a time, going from left to right.  The matrix $B^{(3)}$ is formed from $B^{(4)}$ by this operation.
Its bottom row therefore has zeroes in all but its last entry, which is 1.  Let $\tilde{c}^{(3)}_{i,j}$ denote its
entries that correspond to positions of $\tilde{c}_{i,j}$ that occur on the {\em left} hand side of $B(n+1)$. By
the subtraction construction, these are the negatives of the entries that lie immediately to the left of the
positions of $\tilde{c}_{i,j}$ on the right hand side of $B(n+1)$.  The $n+1$-st column remains unchanged from
$B(n+1)$, except that its $2n$-th entry is now $-\f{c_{n,n}}{z_{n,n}}=-x_{n,2n}$.

Recall that for $x=c_{i,j}$ or $z_{i,j}$, the notation $B^{(3)}_x$ refers to the square contiguous subblock of
$B^{(3)}$ whose top right entry is the instance of $x$ on the right hand side of the matrix $A$ from
(\ref{upperblock}), and whose bottom row is the bottom row of $B^{(3)}$.  Let $B^{(3)}_{\dagger x}$ denote the
further subblock of $B^{(3)}_x$ formed by  removing its last row and column.  Had we stopped subtracting the entry
to the right just before reaching the column containing $x$, the entries in $B^{(3)}_{\dagger x}$ would  equal the
corresponding ones in $B^{(3)}_x$, and the determinants of these blocks would agree because the bottom row of
$B^{(3)}_x$ would have all zeroes except for a $1$ in its last entry.  Since this partially-formed $B^{(3)}_x$
would be created from $B^{(4)}_x$ by column subtractions within its block, both determinants  would in turn equal
$\det B^{(4)}_x$, which equals  $\det B(n+1)_x$ divided by the $x_{i,2n+1}$ that fell in its columns. In
particular, for the entries $x=c_{n,i}$, $i<n$, that occur in the second to last row of $B(n+1)$, we have
\begin{equation}\label{bcindpf1}
\aligned
\tilde{c}^{(3)}_{n,i} \ \ & = \ \  -\det   B^{(3)}_{\dagger c_{n,i}}  \ \ = \ \ -\, \f{\det B(n+1)_{c_{n,i}}}{x_{i,2n+1}\,x_{i+1,2n+1}} \ \ = \ \ -\,\f{\det\ttwo{\tilde c_{n,i+1}}{\tilde c_{n,i}}{\tilde z_{n,i+1}}{\tilde z_{n,i}}}{x_{i,2n+1}\,x_{i+1,2n+1}} \\
&    = \ \ \f{c_{n,i}\,z_{n,i+1}}{x_{i,2n+1}\,x_{i+1,2n+1}} \ \  = \ \ \f{(x_{i,2n}\,x_{i,2n+1})(x_{i+1,2n+1})}{x_{i,2n+1}\,x_{i+1,2n+1}}  \ \ = \ \  x_{i,2n}\,.
\endaligned
\end{equation}

The passage from $B^{(3)}$ to $B^{(2)}=B^{(3)}m_2\i$ is created by dividing columns $i$ and $2n+1-i$ by $x_{i,2n}$,
for $i\le n$.  In particular it converts the entries in the second-to-bottom row from $-\tilde{c}^{(3)}_{n,i}$ to
$-1$,  $i\le n$.  It furthermore divides the determinant of any block by the product of the $x_{i,2n}$'s that
divided its columns.  The matrix $m_1\i$ has all $1$'s on its diagonal except for $-1$ in its $2n$-th position,
$-1$'s immediately below the diagonal except for the $n$-th and $2n$-th columns, and zeroes everywhere else.
Multiplying by it on the right  involves elementary column operations, and gives a matrix
$B^{(1)}=B^{(2)}m_1\i=B(n+1)m_4\i m_3\i m_2\i m_1\i$ which has $1$'s on the diagonal in its bottom two rows, and
zeroes below the diagonal in these rows.  If $u$ is the unit upper triangular $(2n+1)\times (2n+1)$ matrix that is
formed by replacing the last two columns of the identity matrix with the last two columns of $B^{(1)}$, then $u\i
B^{(1)}$ has the form $\ttwo{B'}{}{}{I_2}$, where $B'$ is the upper left $(2n-1)\times(2n-1)$ block of $B^{(1)}$.
The matrix $B'$ can also be constructed from the upper left $(2n-1)\times(2n-1)$ block of $B^{(2)}$ by successively
subtracting the entry immediately to the right of any entry, going from left to right, but skipping over the $n$-th
column.

We have hence reduced the proposition to showing that $B'$ equals $B(n)$, modulo a unit upper triangular matrix on
the left; equivalently, that $B(n)$ can be obtained from $B'$  by adding multiplies of lower rows to higher rows.
Let us first compare the structure of $B'$ to $B(n)$, starting with the left hand side. The entries
$\widetilde{c}^{(3)}_{n,i}$ from (\ref{bcindpf1}) are scaled to $1$ in $B^{(2)}$, and then subtracted from each
other in $B^{(1)}$, leaving zeroes in the first $n-1$ entries in the bottom row, and a $1$ in the $n$-th entry.  On
the right hand side, the subtraction operations in the multiplications by $m_2\i$ and $m_4\i$ scaled and moved the
entries $c_{i,i}$ two spots left.  As a result, the positions of the zeroes of $B(n)$ and $B'$ agree in all but the
$n$-th and $n+1$-st columns.  The $n$-th column has all zero entries, aside from a $1$ on the bottom.  By adding
multiplies of this row to the odd numbered rows, this column can be given the form of the middle column of $B(n)$.
However, that operation will move multiples of the bottom row into odd numbered rows on the right hand side, and
thereby spoil  a number of previously zero positions on the right hand side of the matrix, namely those an odd
number of positions above and two to the left of ones originally occupied by one of the  $c_{i,i}$, $i<n$. These
latter quantities migrated over two spots by consecutive subtractions, and divisions by nonzero quantities.
 Hence they are nonzero, and can be used as pivot columns to restore those zeroes spoiled by recreating the middle column.

These row operations do not affect the even numbered rows, the ones corresponding to $c_{i,j}$ entries. We thus get
a matrix of the same pattern as (\ref{upperblock}) in terms of the location of its zero entries, the equality
between the $c_{i,j}$ on the left and right hand sides, and the  relation $ c_j = \sum_{i\le j}c_{i,j}$.  Its
entries therefore match $B(n)$ if and only if they satisfy the analogous determinant relation
(\ref{determinantalproperty}) for $B(n)$.   These determinants are of contiguous subblocks whose bottom row is the
bottom row of the matrix, and are therefore unaffected by multiplication by unit upper triangular matrices on the
left.  Thus it suffices to verify the determinant property for $B'$. Let us then consider a variable $x_{c_{i,j}}$
or $z_{i,j}$ that occurs in $B(n)$, and the  subblock $B'_x$ of $B'$ formed from the same positions of $B(n)_x$.
This block consists of the same positions of the subblock $B(n+1)_x$ of $B(n+1)$, but without its last two rows and
columns -- what could be called $B(n+1)_{\dagger\dagger x}$ in the above notation.  Consider the intermediate block
$B^{(2)}_{\dagger x}$ of $B^{(2)}$, which has one more row and column than $B'_x$.  Had the subtraction procedure
used to form $B'_x$ stopped after altering its rightmost column, and not continued modifying columns to the right,
the block $B^{(2)}_{\dagger x}$ would have all zeroes on its bottom row except for a $-1$ entry in its bottom right
corner, and the matrix $B'_x$ in its upper left corner.
  The determinant of this hypothetical matrix is $\det B^{(2)}_{\dagger x}$ because it is formed by performing elementary column operations to $B^{(2)}_{\dagger x}$, and at the same time equals $-\det B'_x$.
It also equals $\det B^{(3)}_{\dagger x}$, divided by the $x_{i,2n}$ from its bottom row.
 We saw above that $\det B^{(3)}_{\dagger x}$ equals $\det B(n+1)_x$, divided by the $x_{i,2n+1}$ from its bottom row.     As we mentioned in the equivalent statement right after (\ref{determinantalproperty}), $\det B(n+1)_x$ is the product of all $x_{k,\ell}$ lying on the bottom right part of $B(n+1)_x$ (including the antidiagonal), multiplied by $(-1)^{(2n+2-\ell)(2n+1-\ell)/2}$.
 We conclude that $\det B'_x$, which differs by precisely the negative of the product of the $x_{k,\ell}$ that are in $B(n+1)_x$  but not in $B(n)_x$, satisfies the equivalent statement to (\ref{determinantalproperty}).
 \end{proof}

The formula for $b_{-,n}$ when $n=1$ fits the above pattern if one takes $b_{-,0}=1$.  Therefore the previous
proposition gives a product expansion for the lower triangular part of $ B$ as the product of $4n$ matrices.  Each
variable $x_{i,j}$ occurs in a unique such matrix.  In particular, the lower triangular part $b_{-}$ of  $B$ is
linear in each  $x_{i,j}$.  If $C(x_{i,j})$ denotes the  coefficient matrix of $x_{i,j}$ in $ B$, then it follows
from this factorization that
\begin{equation}\label{indpunchforlater}
    b_{-}\i C(x_{i,j}) x_{i,j} \ \ \text{is a lower triangular matrix with entries in~}\Z[{\mathcal S},{\mathcal S}\i]\,,
\end{equation}
where $\mathcal S=\{x_{k,\ell}\mid \ell>j\}$.  This is because $C(x_{i,j})x_{i,j}$ is itself the same product of
$4n$ matrices, but with the appropriate matrix $m_2$ or $m_4$ that contains $x_{i,j}$ instead altered to have
zeroes in all positions other than its two instances of $x_{i,j}$; the factors containing the variables not in
$\mathcal S$ cancel out in the product $b_{-}\i C(x_{i,j})x_{i,j}$.

It is also possible to read off the diagonal entries of $b_{-}$ from this representation of $b_{-}$ as a product of
$4n$ matrices.  Indeed, their diagonal elements are all either $1$, $-1$ (coming from the second to last entries of
the $m_1$ matrices), $x_{i,2j}$ (in entries $i$ and $2j+1-i$ of the $m_2$ matrices), or $x_{i,2j+1}$ (in entries
$i$ and $2j+2-i$ of the $m_4$ matrices).   Combining this with proposition~\ref{ext2reductionshort}, we get the
explicit formula
\begin{equation}\label{explformforwld}
    \gathered
\!\!\!\!\!\!   \!\!\!\!\!\!\!\! w_{\l,\d}\(\sigma \ttwo{C}{Z}{}{C}\ttwo{f_1}{}{}{f_2}\)  \ \ = \ \
e\(\sum_{2i\le j \le 2n-2}x_{i,j}-\sum_{i<n}x_{i,2n-1}\) \, \times
\\
\ \ \ \ \ \ \ \  \ \times \   \kappa_2 \prod_{2i\le j \le 2n-1}|x_{i,j}|^{-\l_i-\l_{j+1-i} +(2n+1)-i-(j+1-i)}\sgn(x_{i,j})^{\d_{i}+\d_{j+1-i}}\,,
    \endgathered
\end{equation}
where $\kappa_2= (-1)^{\d_2+\d_4+\cdots+\d_{2n-2}}$, valid when each $x_{i,j}\neq 0$.

\section{Local Integrals}\label{sec:localintegrals}

We now return to the calculation of  (\ref{tshrink2}) from the end of section~\ref{unfolding}. The measure factor
(\ref{dmu}) is equal to
\begin{equation}\label{Cmeaschange}
\aligned
   d\mu
\ \ & = \ \
\(\prod_{j\,=\,1}^{n-1}|c_{j,j}|^{s+2j-2n-1}\sgn(c_{j,j})^\eta
\) \prod_{1\le j \le i < n}d c_{i,j}\,dz_{i,j} \\
& = \ \
\(\prod_{j\,=\,1}^{n-1}|y_{j,2j}|^{s+2j-2n-1}\sgn(y_{j,2j})^\eta
\) \prod_{2i\le j < 2n }d y_{i,j} \\ & = \ \
 \prod_{i\,=\,1}^{n-1}\prod_{j=2i}^{2n-1} |x_{i,j}|^{s+j-2n-1}\sgn(x_{i,j})^\eta\,
 d x_{i,j}\,.
\endaligned
\end{equation}
Let us  consider the pointwise limit of the  integral (\ref{tshrink2}) as $t\rightarrow\infty$.  In this limit
$W_{\infty,t}\(\sigma\ttwo{C}{Z}{}{C}\ttwo{f_1}{}{}{f_2}\)$ tends to (\ref{explformforwld}), and the integral
formally becomes
\begin{equation}\label{sec5eq1inxs}
\gathered
 \!\!\!\!\!\!\!\!\!\!\!\!  \!\!\!\!\!\!\!\!\!\!\!\!   \!\!\!\!\!\!\!\!\!\!\!\!   \!\!\!\!\!\!\!\!\!\!\!\!   \!\!\!\!\!\!\!\!\!\!\!\!  \kappa_1'\,\kappa_2\,
\int_{\R^{n(n-1)}}
e\(\sum_{2i\le j \le 2n-2}x_{i,j}-\sum_{i<n}x_{i,2n-1}\)\ \times \\ \ \ \ \ \ \ \ \ \ \ \ \ \ \ \ \ \ \ \ \  \times \ \prod_{2i\le j \le 2n-1}|x_{i,j}|^{s-\l_i-\l_{j+1-i}-1}\sgn(x_{i,j})^{\d_i+\d_{j+1-i}+\eta}\,dx_{i,j}\,.
\endgathered
\end{equation}
Again formally, this is a product  of $n(n-1)$ integrals of the form (\ref{eisen3}), which -- were this possible to
legitimize  -- would  express $\Psi_\infty(s,w_{\l,\d})$ as the product
\begin{equation}\label{recall6}
\Psi_\infty(s,w_{\l,\d}) \ \ = \ \
\kappa_1'\,   \kappa_2\,\kappa_3\, \
    \prod_{\stackrel{\scriptstyle{1 \le i < j \le 2n}}{i+j\le 2n}}
    G_{\d_i+\d_j+\eta}(s-\l_i-\l_j)\,,
\end{equation}
where $\kappa_3=(-1)^{(n-1)\eta + \d_n + \d_{2n}+\e+n\eta} = (-1)^{\eta + \d_n + \d_{2n}+\e}$ comes from the signs
in front of the $x_{i,2n-1}$ terms in the exponential factor.  The overall sign is
\begin{equation*}\label{lasterlineadded}
\aligned
\kappa \  &  =  \ \kappa_1'\kappa_2\kappa_3 \  =  \ (-1)^{\d_2+\cdots+\d_{n-1} + \d_{2n}+\e+n\eta }  (-1)^{\d_2+\d_4+\d_6+\cdots + \d_{2n-2}}  (-1)^{\eta + \d_n + \d_{2n}+\e} \\ & =  \ (-1)^{(n+1)\eta +\sum_{j=2}^n\d_j+\sum_{j=1}^{n-1}\d_{2j}}\,,
\endaligned
\end{equation*}
where $\kappa_1'$ and  $\kappa_2$ are defined just after (\ref{unfoldingprop31newz}) and (\ref{explformforwld}),
respectively.

In this paragraph we shall assume the validity of
 (\ref{recall6}), and derive some of its consequences.
  If $S$ denotes the set of primes at which either $\pi$ or $\chi$ are ramified and
 ${\mathcal G}(s)$ denotes the product
\begin{equation}\label{halfcomplete}
   {\mathcal G}(s) \ \ = \ \  \prod_{\stackrel{\scriptstyle{1 \le i < j \le 2n}}{i+j \le
    2n}} G_{\d_i+\d_j+\eta}(s-\l_i-\l_j)\,,
\end{equation} (\ref{localunramcalc}) and proposition~\ref{unfoldingprop}  imply that
\begin{equation}\label{whatsentire}
\gathered
   P(\tau,E(s)) \ \ = \ \  \kappa \ {\mathcal G}(s)\,  L^S(s,\pi,Ext^2\otimes \chi)   \,   {\prod}_{p\,\in\,S}\Psi_p(s,W_p,\Phi_p)\,.
    \endgathered
\end{equation}
In particular \thmref{thmpairing} shows that
\begin{equation}\label{recall75}
\gathered
    \text{expression (\ref{whatsentire}) is holomorphic for
     $s\in \C-\{1\}$} \\ \text{ and has at most
a simple pole at $s=1$.}
\endgathered
\end{equation}
In the situation where $\pi$ corresponds to a full level form, $\e=\eta\equiv 0\imod 2$, $\chi$ is trivial, and
$S=\{\}$, we may combine this with (\ref{missingsign}) to obtain
\begin{equation}\label{recall8}
  P(\widetilde{\tau},E(1-s)) \ \  = \ \ (-1)^{\d_2+\d_4+\cdots+\d_{2n}}
\,\widetilde{\mathcal G}(1-s)
    \,L(1-s,\widetilde{\pi},Ext^2) \,,
\end{equation}
where
\begin{equation}\label{halfcompletetilde}
\aligned
 \widetilde{\mathcal G}(s) \ \ & = \ \  {\prod}_{\stackrel{\scriptstyle{1 \le i < j \le 2n}}{i+j \le
    2n}}\  G_{\d_{2n+1-i}+\d_{2n+1-j}}(s+\l_{2n+1-i}+\l_{2n+1-j}) \\
    & = \ \  {\prod}_{\stackrel{\scriptstyle{1 \le i < j \le 2n}}{i+j >
    2n+1}} \ G_{\d_i+\d_j}(s+\l_{i}+\l_{j})
\endaligned
\end{equation}
is the analog of (\ref{halfcomplete}) for $\tilde \pi$ (cf. (\ref{ab12})). Since $\e=\eta\equiv 0\imod 2$, the sign
$\kappa$ simplifies to $(-1)^{\sum_{j=2}^n\d_j+\sum_{j=1}^{n-1}\d_{2j}}$, while the analogous sign for
$\widetilde\tau$ on the other side of the functional equation,
$(-1)^{\sum_{j=2}^n\d_{2n+1-j}+\sum_{j=1}^{n-1}\d_{2n+1-2j}}$, is also equal to $\kappa$ (cf.~(\ref{pair8})).
Inserting into (\ref{penufefulllevel}), we get the functional equation
\begin{equation}\label{recall9}
\aligned
  (-1)^{\sum_{j\le n}\d_j+\d_{2j}} \,  {\mathcal G}(s)\,L(s,\pi,Ext^2)  \ \ = \qquad\qquad&\qquad  \\
    \prod_{j=1}^n G_{\d_{n+j}+\d_{n+1-j}}(1-s+\l_{n+j}+\l_{n+1-j})  & \,
\widetilde{\mathcal G}(1-s)\,L(1-s,\widetilde{\pi},Ext^2)\,.
\endaligned
\end{equation}
Both sides of this functional equation are entire. Using the identity $G_\d(s) G_\d(1-s) = (-1)^\d$ (a consequence
of  (\ref{eisen3b})), the $G_\d$-functions on the right can be moved to the left.  This  results in the cleaner
statement
\begin{equation}\label{assumedfeagain}
    \(\prod_{1\le i < j \le
     2n} G_{\d_i+\d_j}(s-\l_i-\l_j)\)\,L(s,\pi,Ext^2) \ \ = \ \
     L(1-s,\tilde\pi,Ext^2)\, ;
\end{equation}
in the next section, both sides will be seen to be holomorphic on $\C-\{0,1\}$, with at most possible simple poles
at 0 and 1.

The rest of this section is devoted to proving (\ref{recall6}), thereby making the above calculation rigorous.
 The integral in
 (\ref{tshrink2})
  is that of a smooth, integrable function, and so its value is unchanged if we restrict the range of integration to the dense open subset $\mathcal D$ of $\R^{n(n-1)}$ on which none of the $x_{i,j}$ vanish -- this is legitimate because  $L^1$ integrals are independent of parametrization. The integral is then equal to
\begin{equation}\label{rigamorale}
    \kappa_1' \, \int_{\mathcal D} a_t^{\rho-\l} \,W_{\infty,1}\!\!\(\sigma\ttwo{C(x)}{Z(x)}{}{C(x)}\ttwo{f_1}{}{}{f_2}a_t\i\)d\mu\,.
\end{equation}
According to propositions~\ref{ext2reductionshort} and  \ref{lowertrianpartprop},
$\sigma\ttwo{C(x)}{Z(x)}{}{C(x)}\ttwo{f_1}{}{}{f_2}a_t\i$ can be  uniquely written as a product
$u(x)b_{-}(x)a_t\i$, where $u(x)$ is unit upper triangular and $b_{-}(x)\in B_{-}(\R)$ is a $2n\times 2n$ lower
triangular matrix whose last row has all zeroes except for a 1 in the last entry.  As in the calculation of
(\ref{explformforwld}), the Whittaker function $W_{\infty,1}$ transforms on the left under $u(x)$ by the character
$e(\sum_{2i\le j \le 2n-2}x_{i,j}-\sum_{i<n}x_{i,2n-1})$. The remaining part,
$a_t^{\rho-\l}W_{\infty,1}(b_{-}(x)a_t\i)$ tends to $
   \kappa_2
\prod_{j\,=\,1}^{n-1} \prod_{k=2j}^{2n-1} \\ |x_{j,k}|^{2n-k-\l_j-\l_{k-j+1}}\sgn(x_{j,k})^{\d_j+\d_{k-j+1}} $ as
$t\rightarrow \infty$ (cf.~the comments at the end of sections~\ref{unfolding} and \ref{slicingsec}).

The integral  (\ref{eisen3}) is only conditionally convergent for $0<\Re{s}<1$. However, one can integrate by parts
$N$ times, $N\geq 0$, to give meaning to the formula as an absolutely convergent integral on any vertical strip $0
< \Re s < N$. To see this, choose $\psi \in C^\infty_c(\R)$, with $\psi(x) \equiv 1$ near $x=0$. Then
\begin{equation}\label{makesenseofgds}
\gathered
  \!\!\!\!\!  \!\!\!\!\! \!\!\!\!\! \!\!\!\!\! \!\!\!\!\! \!\!\!\!\! \!\!\!\!\! \!\!\!\!\! \!\!\!\!\! \!\!\!\!\! \!\!\!\!\! \!\!\!\!\! \!\!\!\!\! \!\!\!\!\! \!\!\!\!\! \!\!\!\!\! \!\!\!\!\!
   G_\d(s) \ \ = \ \ \int_{\R} e(x)|x|^{s-1}\sgn(x)^\d\,\psi(x)\,dx \ + \\ \ \ \ \ \ \ \ \ \ \ \ \ \ \ \ \ \ \ \ \ \ + \ \(\f{-1}{2\pi i}\)^N\int_{\R}e(x)\,\smallf{d^N}{dx^N}\!\(|x|^{s-1}\sgn(x)^\d(1-\psi(x))\)\,dx\,,
    \endgathered
\end{equation}
since each differentiation improves the decay rate at infinity of the second integrand by a power of $|x|^{-1}$,
while the first integral remains absolutely convergent. Our strategy is to divide $\mathcal D$ into $2^{n(n-1)}$
subsets using a smooth partition of unity, to integrate each of the resulting integrals by parts using (5.12), and
to argue that each of these integrals converges absolutely to its pointwise limit.  If $\mathcal V$ is any subset
of the variables $x_{i,j}$, let
\begin{equation}\label{psiV}
    \psi_{\mathcal V}(x) \ \ = \ \ \prod_{x_{i,j}\notin {\mathcal V}} \psi(x_{i,j}) \, \times \, \prod_{x_{i,j}\in {\mathcal V}}(1-\psi(x_{i,j})).
\end{equation}
Thus (\ref{rigamorale}) is equal to
\begin{equation}\label{rigatoni}
    \kappa_1' a_t^{\rho-\l} \sum_{{\mathcal V}\,\subset\, \{x_{i,j}\}}
\int_{{\mathcal D}}e(\sum_{2i\le j \le 2n-2}x_{i,j}-\sum_{i<n}x_{i,2n-1})
W_{\infty,1}(b_{-}(x)a_t\i) \psi_{\mathcal V}(x) d\mu.
\end{equation}
For each summand, integrate by parts $N_{i,j}$ times as in (\ref{makesenseofgds}) in the variables $x_{i,j}\in
\mathcal V$, i.e., integrate up the exponential in $x_{i,j}$ to get a constant multiple of itself, and apply
$\frac{\partial}{\partial x_{i,j}}$ to the rest of the expression to its right.    The derivatives of the partition
of unity and the powers of the $|x_{k,\ell}|$ are straightforward, and play the same role as they do in
(\ref{makesenseofgds}).  The following proposition governs the differentiation of the Whittaker function.

\begin{prop}\label{whitdiffprop}
Fix a variable $x_{i,j}$,  let $\mathcal S=\{x_{k,\ell}\mid \ell>j\}$, and let $W$ be a smooth Whittaker function
for the principal series representation $V_{\l,\d}$. Then the derivative $x_{i,j}\frac{\partial}{\partial x_{i,j}}$
of $W(b_{-}(x)a_t\i)$ is a finite sum of polynomials in $\mathcal S$, $\mathcal S\i$, and $t\i$, times other smooth
Whittaker functions for $V_{\l,\d}$, evaluated at $b_{-}(x)a_t\i$.
\end{prop}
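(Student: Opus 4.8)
The plan is to reinterpret the operator $x_{i,j}\f{\partial}{\partial x_{i,j}}$, applied to $W(b_{-}(x)a_t\i)$, as an infinitesimal right translation of $W$ along a lower triangular direction whose matrix entries lie in $\Z[{\mathcal S},{\mathcal S}\i,t\i]$, and then to expand that direction in a fixed basis of lower triangular matrices. First I would recall, from proposition~\ref{lowertrianpartprop} and the discussion following it, that $b_{-}=b_{-}(x)$ has polynomial entries in the $x_{k,\ell}$ and is linear in each $x_{i,j}$; thus $\f{\partial}{\partial x_{i,j}}b_{-}(x)$ is precisely the coefficient matrix $C(x_{i,j})$ appearing in (\ref{indpunchforlater}). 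Since $a_t\i$ does not involve the $x$-variables, the chain rule gives
\begin{equation*}
   x_{i,j}\f{\partial}{\partial x_{i,j}}\bigl(b_{-}(x)a_t\i\bigr) \ = \ x_{i,j}\,C(x_{i,j})\,a_t\i \ = \ \bigl(b_{-}(x)a_t\i\bigr)\,Y\,, \qquad Y \ = \ a_t\,\bigl(b_{-}(x)\i\,x_{i,j}\,C(x_{i,j})\bigr)\,a_t\i\,.
\end{equation*}

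Next I would determine the shape of $Y$. By (\ref{indpunchforlater}) the matrix $M:=b_{-}(x)\i\,x_{i,j}\,C(x_{i,j})$ is lower triangular with all entries in $\Z[{\mathcal S},{\mathcal S}\i]$. Conjugation by the diagonal matrix $a_t$, whose successive diagonal exponents decrease by $1$, multiplies the $(k,\ell)$-entry of $M$ by $t^{\ell-k}$; since $M_{k,\ell}$ vanishes unless $k\ge\ell$, each such factor is a nonnegative power of $t\i$. Hence $Y$ is again lower triangular, with entries in $\Z[{\mathcal S},{\mathcal S}\i,t\i]$. Writing $Y=\sum_{k\ge\ell}p_{k,\ell}(x,t)\,E_{k,\ell}$ in the basis of elementary matrices $E_{k,\ell}$ ($k\ge\ell$) spanning the space $\mathfrak{b}_{-}$ of lower triangular matrices in $\mathfrak{gl}(2n,\R)$, with $p_{k,\ell}\in\Z[{\mathcal S},{\mathcal S}\i,t\i]$, and using the identity $\tfrac{d}{du}\big|_{u=0}W(g\exp(uE))=(r(E)W)(g)$ for the derived right regular action $r$ on the Whittaker model, the displayed formula above yields
\begin{equation*}
   x_{i,j}\f{\partial}{\partial x_{i,j}}\,W\bigl(b_{-}(x)a_t\i\bigr) \ = \ \sum_{k\ge\ell}\,p_{k,\ell}(x,t)\,\bigl(r(E_{k,\ell})W\bigr)\bigl(b_{-}(x)a_t\i\bigr)\,.
\end{equation*}
Finally I would observe that each $r(E_{k,\ell})W$ is again a smooth Whittaker function for $V_{\l,\d}$: the Whittaker model of $V_{\l,\d}$ is a smooth $GL(2n,\R)$-module, hence its space of smooth vectors is stable under the derived action of $\mathfrak{gl}(2n,\R)$. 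This is exactly the asserted form.

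The only genuinely nontrivial input is (\ref{indpunchforlater}), which is already established; the rest is bookkeeping. The one point that needs mild care is checking that conjugation by the dominant diagonal $a_t$ introduces only nonnegative powers of $t\i$, which is where it matters that $M$ is lower triangular. The other ingredient --- that differentiating in a direction of the form $g\mapsto gE$ stays inside the Whittaker model --- is standard. I therefore do not expect any essential obstacle; the only subtlety worth emphasizing is that $Y$ involves the variables in $\mathcal S$ only, and this is precisely what (\ref{indpunchforlater}) guarantees.
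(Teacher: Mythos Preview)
Your proposal is correct and follows essentially the same approach as the paper's own proof: both rewrite the derivative as a right Lie algebra derivative of $W$ along $Y=a_t\bigl(b_{-}(x)\i\,x_{i,j}C(x_{i,j})\bigr)a_t\i$, invoke (\ref{indpunchforlater}) to see the entries lie in $\Z[{\mathcal S},{\mathcal S}\i]$, observe that conjugation by the dominant $a_t$ contributes only nonnegative powers of $t\i$ by lower-triangularity, and conclude using the stability of the Whittaker model under the derived action of $\mathfrak{gl}(2n,\R)$. The only cosmetic difference is that the paper computes $\f{\partial}{\partial x_{i,j}}$ and carries the factor $x_{i,j}\i$ explicitly, while you absorb it from the start by working with $x_{i,j}\f{\partial}{\partial x_{i,j}}$.
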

\begin{proof}
Let $C=C(x_{i,j})$ denote the coefficient matrix of $x_{i,j}$ in $b_{-}(x)$, which is an affine
 function of $x_{i,j}$.  For $s$ small, we may equate $b_{-}(x)+s C$, the translation in $x_{i,j}$ by $s$, with $b_{-}(x) \exp(sb_{-}(x)\i C)+O(s^2)$.  Thus
\begin{equation}\label{whitdiffprop1}
\f{\partial}{\partial x_{i,j}}W(b_{-}(x)a_t\i) \ \   = \ \ \left.\f{d}{ds}\right|_{s=0}
W\(b_{-}(x)\,a_t\i e^{s\,Y}\)
\end{equation}
is the right Lie algebra derivative of $W$ by $Y= a_tb_{-}(x)\i Ca_t\i$, evaluated at $b_{-}(x)a_t\i$. It follows
from (\ref{indpunchforlater}) that $a_tb_{-}(x)\i Ca_t\i$ can be expanded as a sum
$x_{i,j}\i\sum_{k>\ell}p_{k,\ell}E_{k,\ell}$, where the $E_{k,\ell}$ range over strictly lower triangular matrices
in ${\frak{gl}}(2n,\R)$ which have zeroes except for a $1$ in their $(k,\ell)$-th entry, and the $p_{k,\ell}$ are
polynomials in $\mathcal S$, $\mathcal S\i$, and $t\i$. The result follows because  Lie algebra derivatives of $W$
by fixed matrices in ${\frak{gl}}(2n,\R)$ are themselves smooth Whittaker functions for $V_{\l,\d}$.
\end{proof}

In particular, the differentiation involved in the integration by parts in $x_{i,j}$ decreases the exponent of
$|x_{i,j}|$, at the cost of altering the exponents of the $x_{k,\ell}$ for which $\ell>j$.  For this reason we
implement the integrations by parts starting with lower values of $j$, so that these alterations may be taken into
account.   The complex variable $s$ enters into the calculation through $d\mu$ in (\ref{Cmeaschange}).  Let us
temporarily decouple the $s$'s that occur there, by replacing the one occurring in the exponent of $|x_{i,j}|$ by
$s_{i,j}$.   The decreased exponents for $x_{k,\ell}\notin\mathcal V$ can be compensated for by increasing $\Re
s_{k,\ell}$, whereas the increased exponents for $x_{k,\ell}\in \mathcal V$ can be decreased increasing the value
of $N_{k,\ell}$; in both cases, we can arrange that the exponents are in the range of absolute convergence, with an
arbitrary amount of room to spare.

To finish the argument, we recall Jacquet's holomorphic continuation of Whittaker functions (see
\cite[Theorem~7.2]{chm}), which allows us to establish the result of our calculation by verifying it only for  $\l$
in a small open set.  The following proposition gives bounds for Whittaker functions which then establish dominated
convergence on each piece of (\ref{rigatoni}).
 Indeed, the expression of the differential operator $x_{i,j}\frac{\partial}{\partial x_{i,j}}$ in terms of Lie algebra derivatives in the last proof shows that the limit of $DW_{\infty,t}(b_{-}(x))$ equals $Dw_{\l,\d}(b_{-}(x))$, for any differential operator $D$ which is a polynomial of the $\f{\partial}{\partial x_{i,j}}$.
 The integrals then converge to products of (\ref{makesenseofgds}) evaluated at $s_{i,j}-\l_i-\l_j$, which equals the product in  (\ref{recall6}) when each $s_{i,j}$ is specialized to $s$.

\begin{prop}\label{whitboundprop}
Suppose that $V_{\l,\d}$ is an irreducible principal series representation of $GL(2n,\R)$, and that
$\l=(\l_1,\ldots,\l_{2n})\in \C^{2n}$, $\|\l\|\le 1$, has distinct entries  satisfying {\rm 1)} $\Re \langle
\rho,\l\rangle \ge \Re \langle \rho,w\l\rangle$ for all $w$ in the Weyl group $\Omega$ of $GL(2n,\R)$,  {\rm 2)}
the entries $(w \l)_m$ of any Weyl translate of $\l$ satisfy $\Re (w \l)_{m-1}\le \Re  (w \l)_m+1$ for all $2\le m
\le 2n$, and {\rm 3)} $\langle \l,\a\rangle \notin \Z$ for any root $\a$.  Let $W$ be a smooth Whittaker function
for $V_{\l,\d}$.  Then $|W(b_{-}(x)a_t\i)|$ is bounded by $|a_t^{\l-\rho}|$ times a polynomial in the
$|x_{i,j}|^{\pm 1}$ whose degree depends only on $n$.
\end{prop}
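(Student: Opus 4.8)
The plan is to transport the estimate to the diagonal torus by an Iwasawa decomposition and then invoke the asymptotic theory of Whittaker functions. First I would write $b_{-}(x)a_t\i = n'(x,t)\,a(x,t)\,k(x,t)$ with $n'(x,t)\in N'(\R)$ unit upper triangular, $a(x,t)$ positive diagonal, and $k(x,t)\in O(2n)$. Since $b_{-}(x)$ is lower triangular and $a_t$ is positive diagonal, the torus part is completely explicit: by Cauchy--Binet the $j$-th leading principal minor of $\bigl(b_{-}(x)a_t\i\bigr)\bigl(b_{-}(x)a_t\i\bigr)^t$ receives a nonzero contribution only from the submatrix supported on the first $j$ columns, so that minor equals $\bigl(\prod_{i\le j}d_i(x)(a_t\i)_{i}\bigr)^2$ (for every $j$), where $d_i(x)$ is the $i$-th diagonal entry of $b_{-}(x)$; comparing with the $n'a(x,t)^2(n')^t$ form of the same minor gives $a(x,t)=|d(x)|\,a_t\i$. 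By proposition~\ref{lowertrianpartprop} each $d_i(x)$ is $\pm 1$ or $\pm x_{k,\ell}$ for a single variable, and $a_t=\operatorname{diag}(t^{\rho_1},\dots,t^{\rho_{2n}})$. Because every Whittaker function obeys $W(n'g)=\psi_{+}(c(n'))W(g)$ with $|\psi_{+}|\equiv 1$, we get $|W(b_{-}(x)a_t\i)|=|(R_{k(x,t)}W)(a(x,t))|$; and as $O(2n)$ is compact, $\{R_kW:k\in O(2n)\}$ lies in a bounded set $\mathcal B$ of smooth vectors of $V_{\l,\d}$. It thus suffices to bound $|W'(a)|$ for positive diagonal $a$, uniformly over $W'\in\mathcal B$.

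For that uniform bound I would use the asymptotics of Whittaker functions. Dixmier--Malliavin writes each smooth vector as a finite sum $\sum\pi(f_i)v_i$, reducing to the $K$-finite case, where the Casselman--Hashizume/Jacquet expansion gives, on the cone $a_j/a_{j+1}<1$, that $W'(a)=\sum_{w\in\Omega}c_w(\l)\,a^{\rho-w\l}\bigl(1+(\text{convergent series in the }a^{\a},\ \a\text{ simple, coefficients polynomial in }\log a)\bigr)$. Hypothesis (3), $\langle\l,\a\rangle\notin\Z$, makes $V_{\l,\d}$ irreducible and forces those $\log a$ coefficients to be constants and the $c_w(\l)$ finite; hypotheses (1) and (2) (which together say $\Re\l$ is dominant and its coordinates lie in an interval of length one) are what guarantee that the leading monomials $a^{\rho-w\l}$ dominate the correction series uniformly up to the walls of the cone. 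Patching these cone expansions together with the standard rapid-decay argument in the complementary directions --- differentiate $W'(n'g)=\psi_{+}(c(n'))W'(g)$ and integrate by parts using $Z(\mathfrak g)$-finiteness --- by induction over the standard Levi subgroups, one obtains for any fixed $N$
\[
   |W'(a)| \ \le \ C\,\Bigl(\max_{w\in\Omega}\ {\prod}_{j=1}^{2n}|a_j|^{\rho_j-\Re(w\l)_j}\Bigr)\ {\prod}_{j=1}^{2n-1}\Bigl(1+\tfrac{a_j}{a_{j+1}}\Bigr)^{-N},
\]
with $C$ depending only on $N$, $\l$, $\d$ and $\mathcal B$, not on $a$ or $W'$.

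Finally I would substitute $a=a(x,t)$, so $a(x,t)_j=|d_j(x)|\,t^{-\rho_j}$ and $|a(x,t)^{\rho-w\l}|=\bigl(\prod_j|d_j(x)|^{\rho_j-\Re(w\l)_j}\bigr)\,t^{-\|\rho\|^2+\Re\langle\rho,w\l\rangle}$. Bounding the maximum of products by the product of maxima, the leading factor at $a(x,t)$ is at most $\bigl(\max_w t^{-\|\rho\|^2+\Re\langle\rho,w\l\rangle}\bigr)\bigl(\max_w\prod_j|d_j(x)|^{\rho_j-\Re(w\l)_j}\bigr)$; by hypothesis (1) and $t\ge 1$ the first factor equals $t^{\Re\langle\rho,\l\rangle-\|\rho\|^2}=|a_t^{\l-\rho}|$, which is exactly the prefactor claimed and which cancels the $a_t^{\rho-\l}$ in front of the integrand in (\ref{rigamorale}). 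Since each $d_j(x)$ is $\pm 1$ or a single variable $\pm x_{k,\ell}$ and $\|\l\|\le 1$, every remaining factor $|d_j(x)|^{\rho_j-\Re(w\l)_j}$ has real exponent bounded in absolute value by a constant $D(n)$ depending only on $n$, hence is dominated by $|x_{k,\ell}|^{D(n)}+|x_{k,\ell}|^{-D(n)}$; and the decay factors satisfy $\bigl(1+a(x,t)_j/a(x,t)_{j+1}\bigr)^{-N}\le 1$. Collecting terms yields $|W(b_{-}(x)a_t\i)|\le|a_t^{\l-\rho}|\cdot P(|x_{k,\ell}|^{\pm1})$ for a polynomial $P$ whose degree depends only on $n$, uniformly in $t\ge 1$.

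The main obstacle is the global Whittaker bound of the second paragraph: one must glue the negative-cone asymptotic expansion (which supplies the sharp leading power) to rapid decay in the positive directions and control the transitional regions near the walls, all uniformly over the compact family $\{R_kW\}$ --- and hypotheses (1)--(3) are calibrated precisely so that this leading-term bound holds with no logarithmic corrections and with the correct dominant Weyl term. By comparison, the Iwasawa computation and the final substitution are bookkeeping.
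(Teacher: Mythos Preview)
Your Iwasawa computation in the first paragraph is incorrect, and this undermines the whole argument.  You claim that the torus part of $b_-(x)a_t^{-1}=n'(x,t)\,a(x,t)\,k(x,t)$ is $a(x,t)=|d(x)|\,a_t^{-1}$, arguing via leading principal minors of $gg^t$.  It is true that the \emph{top-left} $j\times j$ minor of $gg^t$ equals $\bigl(\prod_{i\le j}d_i(x)(a_t^{-1})_i\bigr)^2$, because $g=b_-(x)a_t^{-1}$ is lower triangular.  But with $n'$ unit \emph{upper} triangular, the top-left minors of $n'a^2(n')^t$ are \emph{not} $\prod_{i\le j}a_i^2$; it is the \emph{bottom-right} minors that factor as $\prod_{i>2n-j}a_i^2$.  (Try the $2\times2$ case: for $b_-=\bigl(\begin{smallmatrix}d_1&0\\ e&d_2\end{smallmatrix}\bigr)$ the Iwasawa $a_2$ is $\sqrt{e^2t^{-1}+d_2^2t}$, not $|d_2|t^{1/2}$.)  So your formula $a(x,t)=|d(x)|a_t^{-1}$ is false; the off-diagonal entries of $b_-(x)$ contribute a nontrivial extra diagonal factor.

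The paper's proof is organized precisely around controlling this extra factor.  One writes $b_-(x)=r\,u_-$ with $r$ diagonal and $u_-$ unit lower triangular, so that the Iwasawa torus of $b_-(x)a_t^{-1}$ is $|r|\,a_t^{-1}\,a'$, where $a'$ comes from the Iwasawa decomposition of $a_tu_-a_t^{-1}$.  The key point is that $a'_m=p_m/p_{m+1}$, where $p_m$ is the norm of the exterior product of the bottom $m$ rows of $a_tu_-a_t^{-1}$; since $u_-$ is \emph{unipotent} with last row $(0,\dots,0,1)$, one of the contributing minors is always $1$, hence every $p_m\ge 1$.  Hypothesis~2) --- which you described only vaguely as making ``leading monomials dominate'' --- is used exactly here: it forces each exponent $\rho_m-\rho_{m-1}-(w\lambda)_m+(w\lambda)_{m-1}$ to be $\le 0$, so that $|(a')^{\rho-w\lambda}|=\prod_m p_m^{\text{(nonpositive)}}\le 1$.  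Without this step there is no reason the $a'$ contribution should be bounded polynomially in the $|x_{i,j}|^{\pm1}$, and your final estimate does not follow.  The remaining ingredients you sketch (the asymptotic expansion of $W$ on the torus, the use of hypothesis~1) to single out the $w=e$ term in the $a_t$ factor, and the monomial nature of the diagonal entries of $b_-(x)$) are the same as in the paper.
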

\begin{proof}
That such an open neighborhood of $\l$ exists is clear:~take a sufficiently small open neighborhood of a small
positive multiple of $\rho$. According to \cite[\S4.2-\S4.3]{jsextsq}, a smooth Whittaker function on a diagonal
matrix $d=\operatorname{diag}(d_1,\ldots,d_{2n})$ can be expanded as a sum
\begin{equation}\label{Waexpn}
    W(d) \ \ = \ \ \sum_{w\,\in\,\Omega} d^{\rho-w\l}f_w(\smallf{d_1}{d_2},\smallf{d_2}{d_3},\ldots,\smallf{d_{2n-1}}{d_{2n}})\,,
\end{equation}
 in which each $f_w$ is a Schwartz function on $\R^{2n-1}$.  Moreover, these Schwartz functions are uniformly bounded if $W$ is replaced by any of its right translates ranging over any fixed compact subset of $GL(2n,\R)$.
Actually the  exponents are not given in \cite{jsextsq}, though Casselman has pointed out that the argument there
indicates that they are controlled by the Jacquet module, and hence match (\ref{Waexpn}); in any event, they can be
deduced from the asymptotic expansions of smooth Whittaker functions proved in \cite[\S6]{goodmanwallach}.
(Without condition 3) the Schwartz functions would need to be multiplied by powers of logarithms.)

The matrix $b_{-}(x)$ has zero entries in its last row, aside from the last entry which is $1$.  It can be
decomposed as $ru_{-}$, where $r$ is diagonal, and $u_{-}$ is unit lower triangular and has the same bottom row as
$b_{-}(x)$. Let $n'a'k'$ be the Iwasawa decomposition of $a_tu_{-}a_t\i$, with $n'$ unit upper triangular,
$a'=\operatorname{diag}(a_1',a_2',\ldots,a_{2n}')$ diagonal, and $k'\in O(2n,\R)$.  In general, the product of the
last $m$ diagonal entries of the diagonal factor in the Iwasawa decomposition must equal the norm of the $m$-th
exterior power of the bottom $m$ rows of the matrix (simply because both share the same invariance properties that
cause them to be determined on diagonal matrices, and both agree on diagonal matrices).  In particular we have the
formula $a'_m=\f{p_m}{p_{m+1}}$, where $p_m$ is the norm of the $m$-th exterior power of the bottom $m$ rows of
$a_tu_{-}a_t\i$, and $p_{2n+1}$ is understood to be 1.  Each exterior power is a vector whose components are
determinants of square subblocks of the bottom rows, one of which -- the one coming from the rightmost square
subblock -- always has determinant one.  Thus each $p_m\ge 1$, and in  particular, $p_1=p_{2n}=1$.

We conclude that $|W(b_{-}(x)a_t\i)|=|W(ra_t\i n'a'k')|=|W(ra_t\i a'k')|$ is bounded by $\sum_{\omega\in
\Omega}|(ra_t\i a')^{\rho-\omega\l}|$.  Because of  assumption 1), each term $|a_t^{\omega \l}|=t^{\Re \langle
\rho,\omega \l \rangle} \le t^{\Re \langle \rho,\l\rangle} = |a_t^{\l}|$.
 Because of assumption 2) and the explicit formula for the entries of $a'$, we have that  $|a'|^{\rho-\omega \l}=\prod_{m=2}^{2n-1}|p_m|^{\Re(\rho_{m}-\rho_{m-1}-(\omega \l)_{m}+(\omega\l)_{m-1})}$ is a product of the $p_m$ to nonpositive powers, and hence is bounded by 1.  The result now follows from  proposition~\ref{lowertrianpartprop}, which among other things asserts that the diagonal entries of $b_{-}(x)$ and hence $r$ are products of the $x_{i,j}$ up to sign.
\end{proof}

\section{Functional Equation}\label{sec:functionalequation}

The computation of the pairing in the last section, combined with its functional equation (\ref{adelicpenufe}),
gives a functional equation for   the exterior square $L$-functions relating $s$ and $1-s$, and involving an
explicit ratio of products of Gamma functions.   In this section we shall  show this ratio agrees with Langlands'
formulation \cite{langlandsrealgroups} of the functional equation (later proved in \cite{shahidi}).
 Knowledge of this ratio
 will also be used to establish the full holomorphy in the following section, and also to give a new proof of the functional equation of $L(s,\pi,Ext^2)$ when $\pi_p$ is unramified for all primes $p<\infty$ (proposition~\ref{samefe}).

 Langlands' formula for the Gamma factors involves the description of  $\pi_\infty$ as a parabolically induced representation, whereas ours is in terms of its Casselman embedding.  We thus begin this section by relating the two.  In order to narrow the scope of discussion, we recall that $\pi_\infty$ is necessarily both unitary and generic.  The classification of such representations of $GL(n,\R)$ has long been known to experts; we summarize it here and refer to \cite[Appendix~A.1]{mirabolic} for the derivation from the results of \cite{tadic,voganlarge}.

Consider the self-dual, square integrable (modulo the center) representations of $GL(n,\R)$.  These are precisely
the following representations:~the trivial representation of $GL(1,\R)$; the sign representation $\sgn(\cdot)$ of
$GL(1,\R)$; and the discrete series representations $D_k$ of $GL(2,\R)$ (corresponding to holomorphic forms of
weight $k \ge 2$).  Each such representation has a twist  $\sigma[s]:=\sigma\otimes|\det(\cdot)|^s$ by a central
character.
  If $P$ is the standard parabolic subgroup of $GL(n,\R)$ associated to a partition $n=n_1+\cdots+n_r$ of $n$ with each $n_i\le 2$, and $\sigma_i$ one of these representations of $GL(n_i,\R)$,  then the tensor product of twists of the $\sigma_i$ can be extended to $P$ from its Levi component by letting the unipotent radical act trivially.  The parabolic induction $I(P;\sigma_1[s_1],\ldots,\sigma_r[s_r])$ of this representation of $P$ is a representation of $GL(n,\R)$ which is normalized to be unitary when each $s_i$ is purely imaginary.  By twisting this representation -- or alternatively shifting each $s_i$ -- we may assume it has a unitary central character.  It is unitary, irreducible, and generic precisely when the following two conditions are met:
\begin{equation}\label{genericunitarydual}
    \text{a) the
    multisets $\{\sigma_i[s_i]\}$ and
    $\{\sigma_i[-\overline{s}_i]\}$  are equal, \ \
~~~and~~~b)}\ \
|\Re{s_i}|<1/2\,.
\end{equation}
Conversely, all generic unitary irreducible representations of $GL(n,\R)$ are obtained this way. The induction data
may be freely permuted, i.e., the induced representations
      $I(P;\sigma_1[s_1],\ldots,\sigma_r[s_r])$ and
         $I(P^\tau;\sigma_{\tau(1)}[s_{\tau(1)}],\ldots,\sigma_{\tau(r)}[s_{\tau(r)}])$ are equal, where  $\tau$ is any permutation on $r$ letters, and $P^\tau$ is the standard parabolic corresponding to the partition $n=n_{\tau(1)}+\cdots + n_{\tau(r)}$ of $n$.  Moreover, the multiset  $\{\sigma_i[s_i]\}$  is uniquely determined up to permutation.

The Casselman embeddings of $I(P;\sigma_1[s_1],\ldots,\sigma_r[s_r])$ into are completely described in
\cite[Appendix~A.1]{mirabolic}.  The next proposition describes one among these which will be particularly useful
in arguing the full holomorphy of $\Lambda(s,\pi,Ext^2\otimes\chi)$ in the next section.

\begin{prop}\label{niceembedding}  Let $n$ be even and
 $\pi_\infty=I(P;\sigma_1[s_1],\ldots,\sigma_r[s_r])$ be   a generic
unitary irreducible representation of $GL(n,\R)$. By permuting  if necessary, arrange that $n_1=\cdots =n_{r_1}=1$
and $n_{r_1+1}=\cdots=n_r=2$, where $r=r_1+r_2$   and $r_1$ is even. Write $\sigma_i=\sgn(\cdot)^{\e_i}$ for $1\le
i \le r_1$, and $\sigma_{r_1+i}=D_{k_i}$ for $1\le i \le r_2$. By again permuting the induction data if necessary,
arrange that
$$\Re{s_1} \, \le \,  \Re{s_2} \, \le  \, \cdots
    \, \le \,  \Re{s_{r_1}}\, \text{~and~}\,\Re{s_{r_1+1}} \, \le  \, \Re{s_{r_1+2}} \, \le \,  \cdots \  \le
\,  \Re{s_{r_1+r_2}}\,.$$
  Then
$\pi_\infty'$ embeds into the principal series $V_{\l,\d}$ with parameters
\begin{equation}\label{lamniceembed}
   \gathered
\l  \ \ = \ \   \(-s_1  \, , \, \ldots \, , \,- s_{r_1/2} \, , \,
\textstyle{-s_{r_1+1}-\f{k_1-1}{2}\, , \, -s_{r_1+1}+\f{k_1-1}{2}}
\, ,
  \ldots , \qquad   \qquad  \right. \\
\left.   \qquad  \qquad  \ldots, \,
\textstyle{-s_{r_1+r_2}-\f{k_{r_2}-1}{2},-s_{r_1+r_2}+\f{k_{r_2}-1}{2}} \, ,
\, -s_{r_1/2+1} \, , \, \ldots \, , \, -s_{r_1}\)
   \endgathered
\end{equation}
and
\begin{equation}\label{deltaniceembed}
   \d \ \ =  \ \
\(\e_1   \, , \, \ldots \, , \, \e_{r_1/2}\, , \,
k_1,0, \, k_2,0,\, \ldots, \, k_{r_2},0, \, \e_{r_1/2+1} \, , \,
\ldots \, , \,  \, \e_{r_1}
 \)\, .
\end{equation}
\end{prop}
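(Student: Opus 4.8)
The plan is to obtain this as a re-indexing of the description of Casselman embeddings of parabolically induced representations of $GL(n,\R)$ recorded in \cite[Appendix~A.1]{mirabolic}, combined with three elementary facts: compatibility of normalized parabolic induction with the contragredient, transitivity of normalized induction, and the stated freedom to permute the induction data. The proposition does not contain new content beyond that appendix; it merely singles out, from the family of embeddings catalogued there, the one whose $2n$ parameters are arranged so that the $r_1$ coming from the $GL(1)$-factors are split symmetrically around the $2r_2$ coming from the $GL(2)$-factors, because that particular arrangement is what the full-holomorphy argument of the next section will exploit.

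First I would pass to the contragredient. Since each $\sigma_i$ is self-dual and $\widetilde{\tau\otimes|\det|^{s}}=\tilde\tau\otimes|\det|^{-s}$, and since normalized parabolic induction commutes with taking contragredients, $\pi_\infty'$ is the representation induced from $\sigma_1[-s_1]\boxtimes\cdots\boxtimes\sigma_r[-s_r]$ on the parabolic opposite to $P$ (equivalently, on $P$ after conjugating by the long Weyl element, a change which in any case is absorbed by the permutation freedom used below). This is exactly the configuration aligned with the lower Borel $B_-$ that defines $V_{\l,\d}$ in (\ref{ps5})-(\ref{ps7}). Next, each building block embeds into a principal series of the corresponding $GL(n_i,\R)$: a $GL(1)$-block $\sgn(\cdot)^{\e_i}|\cdot|^{-s_i}$ is literally the character $V_{(-s_i),(\e_i)}$ (for $GL(1)$ one has $\rho=(0)$ and $B_-=GL(1)$), while a $GL(2)$-block $D_{k_i}\otimes|\det|^{-s_i}$ embeds — this being the basic input supplied by \cite[Appendix~A.1]{mirabolic} — as a subrepresentation of the $GL(2)$ principal series $V_{\l^{(i)},\d^{(i)}}$ with $\l^{(i)}=(-s_i-\tfrac{k_i-1}{2},\,-s_i+\tfrac{k_i-1}{2})$ and $\d^{(i)}=(k_i,0)$. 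Transitivity of normalized induction, together with the fact that the normalization absorbs the difference between $\rho$ of $GL(n)$ and the sum of the $\rho$'s of the Levi factors, then shows that $\pi_\infty'$ embeds into the full principal series $V_{\l,\d}$ of $GL(n,\R)$ whose parameters are the concatenations of the blocks $\l^{(i)}$ and of the blocks $\d^{(i)}$, in whatever order the $\sigma_i$ are listed.

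It remains only to choose that order. Since $I(P;\cdots)$ — hence also its Casselman embeddings — is unchanged under permuting the induction data (with $P$ replaced by the corresponding standard parabolic $P^\tau$), I would list the blocks as: the first half $\sigma_1[-s_1],\ldots,\sigma_{r_1/2}[-s_{r_1/2}]$ of the $GL(1)$-blocks, then all $GL(2)$-blocks $\sigma_{r_1+1}[-s_{r_1+1}],\ldots,\sigma_r[-s_r]$, then the second half $\sigma_{r_1/2+1}[-s_{r_1/2+1}],\ldots,\sigma_{r_1}[-s_{r_1}]$ of the $GL(1)$-blocks, each group taken in the prescribed order of the $\Re s_i$. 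Concatenating the parameter blocks in precisely this order reproduces the $\l$ of (\ref{lamniceembed}) and the $\d$ of (\ref{deltaniceembed}); the entry count is $r_1/2+2r_2+r_1/2=r_1+2r_2=n$, and $r_1=n-2r_2$ is even because $n$ is even, as required.

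The only genuinely delicate point is the $GL(2)$ building block: with the $B_-$-conventions of (\ref{ps5})-(\ref{ps7}) one must check that $D_k$ occurs as a \emph{sub}representation of $V_{\l,\d}$ precisely for the \emph{increasing} ordering $\l=(-\tfrac{k-1}{2},\tfrac{k-1}{2})$ and the representative $\d=(k,0)$ — the reversed ordering realizing $D_k$ only as a quotient, and the central character $\sgn^{k}$ forcing $\d_1+\d_2\equiv k\imod 2$ — and then to carry the $\Z/2\Z$-valued $\d$-data correctly through the contragredient and through the concatenation. This, however, is exactly what \cite[Appendix~A.1]{mirabolic} establishes, so I expect the write-up to be essentially bookkeeping with no substantive obstacle. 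Finally, one should recall, as already arranged in (\ref{autodist6}) and explained in \cite{rapiddecay}, that such an embedding of smooth representations induces a closed embedding on the spaces of distribution vectors, which is the sense in which (\ref{lamniceembed})-(\ref{deltaniceembed}) are used.
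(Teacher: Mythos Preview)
Your proposal is correct and takes essentially the same approach as the paper: both reduce the claim to the permutation-invariance of the induction data together with the Casselman embedding description in \cite[(A.1) and (A.2)]{mirabolic}. The paper's proof is terser---it simply reorders $\pi_\infty$ as $I(Q;\sigma_1[s_1],\ldots,\sigma_{r_1/2}[s_{r_1/2}],\sigma_{r_1+1}[s_{r_1+1}],\ldots,\sigma_{r}[s_{r}],\sigma_{r_1/2+1}[s_{r_1/2+1}],\ldots,\sigma_{r_1}[s_{r_1}])$ and then cites the appendix---whereas you unpack the mechanics (contragredient, block-by-block $GL(1)$ and $GL(2)$ embeddings, transitivity of normalized induction) that the cited appendix already packages. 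The one item the paper includes that you omit is the derivation of the symmetry relations $\Re s_i=-\Re s_{r_1+1-i}$ and $\Re s_{r_1+i}=-\Re s_{r_1+r_2+1-i}$, $k_i=k_{r_2+1-i}$ from the unitarity condition (\ref{genericunitarydual}a); these are not needed to prove the proposition itself but are recorded there because the full-holomorphy argument in the next section uses them.
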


\begin{proof}
The assumptions along with  (\ref{genericunitarydual}a) imply that
\begin{equation}\label{realnegequal}
\gathered
\Re{s_i} \  =  \ -\Re{s_{r_1+1-i}}\ \text{ for } i \le  r_1 \\ \text{ ~and both} \ \ \  \Re s_{r_1+i}\   =  \  -\Re s_{r_1+r_2+1-i}\,,\  k_i \ =  \ k_{r_2+1-i}\ \text{ for }i\le r_2\,.
\endgathered
\end{equation}
Because of the independence of permutation,  $\pi_\infty$ is equivalent to the induced representation
$$I(Q;\sigma_1[s_1],\!\ldots\!,\sigma_{\f{r_1}{2}}[s_{\f{r_1}{2}}], \sigma_{r_1+1}[s_{r_1+1}],\!\ldots\!,
\sigma_{r_1+r_2}[s_{r_1+r_2}],\sigma_{\f{r_1}{2}+1}[s_{\f{r_1}{2}+1}],\!\ldots\!,\sigma_{r_1}[s_{r_1}]).$$  The
result now follows immediately from the embedding description in \cite[(A.1) and (A.2)]{mirabolic}.
\end{proof}

We now  summarize Langlands' prediction of $L_\infty(s,\pi,Ext^2\otimes\chi)$ in \cite{langlandsrealgroups}. The
Gamma factors which accompany an $L$-function in its functional equation are always products of shifts of
\begin{equation}\label{gammarc}
    \G_\R(s) \, = \, \pi^{-s/2}\,\G(s/2) \ \ \ \ \text{and} \ \ \ \
     \G_\C(s) \, = \, 2\,(2\pi)^{-s}\,\G(s) \, = \, \G_\R(s)\,\G_\R(s+1)\,.
\end{equation}  Write
  $\pi_\infty = I(P;\sigma_1[s_1],\ldots,\sigma_r[s_r])$, where $n=r_1+2r_2$,  $n_i=1$ for $i\le r_1$, and $n_i=2$ for $r_1 < i \le r_1+r_2$.  Furthermore choose $\e_{ik}$ and $\e_j' \in \{0,1\}$ to be congruent to $\e_i+\e_k$
and $k_j$ modulo 2, respectively. Define the four products
\begin{equation}\label{fivelpieces}
\aligned
L(s,\Pi_2) \ \ & = \ \ \  \, \  {\prod}_{j=1}^{r_2} \G_\R(s+2s_{r_1+j}+ \e_j')\, ,  \\
L(s,\Pi_3)\ \ & = \ \  \  \ \, {\prod}_{\stackrel{\scriptstyle{i \le
r_1}}{j \le r_2}}
\G_\C(s+s_i+s_{r_1+j}+\textstyle{\f{k_j-1}{2}}) \, ,  \\
L(s,\Pi_4)\ \  & = \ \  {\prod}_{1 \le i < k \le
r_1}\G_\R(s+s_i+s_k+\e_{ik}) \, ,
\text{~~~~~~and}  \\
L(s,\Pi_5)\ \  & = \ \ {\prod}_{1 \le j < \ell \le r_2}
\(\G_\C(s+s_{r_1+j}+s_{r_1+\ell}+\textstyle{\f{k_j+k_\ell-2}{2}})
\ \times \right. \\ & \qquad\qquad\qquad\qquad  \left. \times \
\G_\C(s+s_{r_1+j}+s_{r_1+\ell}+\textstyle{\f{|k_j-k_\ell|}{2}})
\)  . \\
\endaligned
\end{equation}
The numbering here is chosen to be consistent with  \cite[(A.18)]{mirabolic}, where it is shown that
$L_\infty(s,\pi,Ext^2) =L(s,\Pi_2)L(s,\Pi_3)L(s,\Pi_4)L(s,\Pi_5)$ is the product of these factors. A similar
formula for $L(s,\pi,Ext^2\otimes\chi)$ reads as follows (\cite[(A.20)]{mirabolic}):
 \begin{equation}\label{ext2tenschiisobar2}
 L_\infty(s,\pi,Ext^2\otimes\chi) \ \ = \ \ L(s,\Pi_2\,\otimes\,\sgn^\eta)\,L(s,\Pi_3)\,L(s,\Pi_4\,\otimes\,\sgn^\eta)\, L(s,\Pi_5)\,,
 \end{equation}
 where $\eta$ is the parity of $\chi_\infty=\sgn(\cdot)^\eta$
(see  (\ref{IfromPhi}) and (\ref{pair8})),
 \begin{equation}\label{twotwistedpieces}
    \aligned
    L(s,\Pi_2\otimes\sgn^\eta) \ \ & = \ \ \  \, \  {\prod}_{j=1}^{r_2} \G_\R(s+2s_{r_1+j}+ \e_{j\eta}')\, ,  \\
    L(s,\Pi_4\otimes\sgn^\eta)\ \  & = \ \  {\prod}_{1 \le i < k \le
r_1}\G_\R(s+s_i+s_k+\e_{ik\eta}) \, , \\
    \endaligned
 \end{equation}
 and $\e_{j\eta}'$ and $\e_{ik\eta}\in\{0,1\}$ are congruent to $\e_j'+\eta\equiv k_j+\eta$
and $\e_{ik}+\eta\equiv \e_i+\e_k+\eta \pmod 2$, respectively.

\begin{prop}\label{samefe}  With the notation as above, one has
that
\begin{equation}\label{neededratio}
    \f{L_\infty(s,\pi,Ext^2)}{\omega\,  L_\infty(1-s,\tilde{\pi},Ext^2)} \ \
    = \ \ {\prod}_{1\le i < j \le
     2n} G_{\d_i+\d_j}(s-\l_i-\l_j)\,,
\end{equation}
where
\begin{equation}\label{omega}
  \omega \ \ = \ \ {\prod}_{1\le i <k \le r_1}i^{-\e_{ik}} \, {\prod}_{j\le r_2}
  i^{\,
  k_j(2j-n)\, - \, \e_j' }  \,.
\end{equation}
More generally, $L_\infty(s,\pi,Ext^2\otimes\chi)/L_\infty(1-s,\tilde{\pi},Ext^2\otimes\chi\i)$ is equal to  a
fourth root of unity times  $\prod_{1\le i<j\le 2n}G_{\d_i+\d_j+\eta}(s-\l_i-\l_j)$.
\end{prop}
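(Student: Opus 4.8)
The plan is to reduce the statement to an identity between ratios of products of $\G_\R$-factors and then to evaluate the leftover fourth root of unity. I would treat the untwisted case first. Using (\ref{eisen3b}) write
\[
\prod_{1\le i<j\le 2n}G_{\d_i+\d_j}(s-\l_i-\l_j)\ =\ i^{P}\,\frac{N(s)}{D(s)}\,,\qquad N(s)=\prod_{1\le i<j\le 2n}\G_\R\!\big(s-\l_i-\l_j+\overline{\d_i+\d_j}\big)\,,
\]
where for $m\in\Z$ I write $\overline m\in\{0,1\}$ for $m$ reduced modulo $2$, where $D(s)$ is the same product with $1-s+\l_i+\l_j$ replacing $s-\l_i-\l_j$, and $P=\sum_{i<j}\overline{\d_i+\d_j}$. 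Substituting the explicit $(\l,\d)$ of proposition~\ref{niceembedding}, the $2n$ indices split into $r_1$ ``$GL(1)$-indices'' and $r_2$ consecutive pairs of ``$GL(2)$-indices'', so the $\binom{2n}{2}$ unordered pairs $(i,j)$ fall into four classes: two $GL(1)$-indices; the two indices of one $GL(2)$-pair; one index of each kind; and one index from each of two distinct $GL(2)$-pairs.

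The core of the argument is to identify $N(s)$ with $L_\infty(s,\pi,Ext^2)$ up to an explicit rational prefactor. For the two-$GL(1)$ class one reads off $\prod_{1\le i<k\le r_1}\G_\R(s+s_i+s_k+\e_{ik})=L(s,\Pi_4)$ directly, and for the within-$GL(2)$-pair class one gets $\prod_{j=1}^{r_2}\G_\R(s+2s_{r_1+j}+\e_j')=L(s,\Pi_2)$ since $\e_j'\equiv k_j$. For the two remaining classes the arguments produced by the $G$-factors differ from those occurring in $L(s,\Pi_3)\,L(s,\Pi_5)$ --- after rewriting every $\G_\C(u)=\G_\R(u)\G_\R(u+1)$ --- only by shifts by even integers; checking this case by case is where the integral-versus-half-integral shifts $\pm\tfrac{k_j-1}{2}$ and the parities $\d=k_j$ versus $\d=0$ of the two indices of a $GL(2)$-block must be reconciled. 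Each shift by $2$ is then absorbed through $\G_\R(u+2)=\tfrac{u}{2\pi}\G_\R(u)$, yielding $N(s)=R(s)\,L_\infty(s,\pi,Ext^2)$ with $R(s)$ an explicit finite product of factors $\tfrac{u}{2\pi}$, $u$ linear in $s$. Next I would use the index reversal $i\mapsto 2n+1-i$ together with $\widetilde\l=(-\l_{2n},\dots,-\l_1)$, $\widetilde\d=(\d_{2n},\dots,\d_1)$ from (\ref{ab12}): $D(s)$ is then literally the expression ``$N$'' formed from $\widetilde\pi$ and evaluated at $1-s$, so the same computation gives $D(s)=R^{\widetilde\pi}(1-s)\,L_\infty(1-s,\widetilde\pi,Ext^2)$, and one checks $R^{\widetilde\pi}(1-s)=R(s)$, so the rational prefactors cancel. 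This leaves
\[
\prod_{1\le i<j\le 2n}G_{\d_i+\d_j}(s-\l_i-\l_j)\ =\ i^{P}\,\frac{L_\infty(s,\pi,Ext^2)}{L_\infty(1-s,\widetilde\pi,Ext^2)}\,,
\]
and comparing with (\ref{neededratio}) identifies $\omega$ with $i^{-P}$. Finally I would expand $P$ over the four classes: the two-$GL(1)$ pairs give $\sum_{1\le i<k\le r_1}\e_{ik}$, the within-$GL(2)$-pair class gives $\sum_j\e_j'$, and the mixed and two-$GL(2)$ classes together record the contribution of each block $j$ through its interaction with the remaining $n-2$ indices, which assembles to $\sum_j k_j(2j-n)$ modulo $4$; altogether this reproduces exactly the formula (\ref{omega}).

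For the twisted version the only change is that each parity $\d_i+\d_j$ becomes $\d_i+\d_j+\eta$. On the $G$-side the $\G_\C$-type contributions (classes three and four) are unaffected, because $G_{a+\eta}(u)\,G_{b+\eta}(u)=G_a(u)\,G_b(u)$ whenever $\{\overline a,\overline b\}=\{0,1\}$ (using $G_{\d+2}=G_\d$); this matches the fact that in (\ref{ext2tenschiisobar2})--(\ref{twotwistedpieces}) only $\Pi_2$ and $\Pi_4$ acquire a $\sgn^\eta$-twist, and indeed the $\G_\R$-type contributions transform precisely as $\Pi_2\otimes\sgn^\eta$ and $\Pi_4\otimes\sgn^\eta$, while the rational prefactor $R(s)$ is unchanged. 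Hence the same cancellation goes through and $L_\infty(s,\pi,Ext^2\otimes\chi)/L_\infty(1-s,\widetilde\pi,Ext^2\otimes\chi\i)$ equals $i^{-P'}\prod_{1\le i<j\le 2n}G_{\d_i+\d_j+\eta}(s-\l_i-\l_j)$ with $P'=\sum_{i<j}\overline{\d_i+\d_j+\eta}$, still a fourth root of unity.

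The step I expect to be the main obstacle is the even-integer matching in the third paragraph: lining up, class by class and block by block, the arguments coming from the $G$-factors with those in $L(s,\Pi_3)$ and $L(s,\Pi_5)$; verifying that the accumulated rational prefactor $R(s)$ is invariant under $s\leftrightarrow 1-s$, $\pi\leftrightarrow\widetilde\pi$ so that it drops out of the ratio; and keeping exact track, through all of this, of the scattered powers of $i$ so as to land precisely on the formula (\ref{omega}). None of this is conceptually difficult, but it is where essentially all of the work, and all of the opportunities for sign errors, reside.
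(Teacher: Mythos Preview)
Your overall architecture --- split the product over pairs $(i,j)$ according to which inducing blocks the indices fall in, simplify each class separately, and track the leftover powers of $i$ --- is the same as the paper's. But there is a genuine gap in the step you flag as routine: the claim that $R^{\widetilde\pi}(1-s)=R(s)$ is \emph{false}. Take a single $GL(1)\times GL(2)$ pair with $\e_i=0$ and $k_j=3$ (so $m=k_j-1=2$), and set $u=s+s_i+s_{r_1+j}$. Your $N(s)$ contributes $\G_\R(u+2)\G_\R(u-1)$ while the target $\G_\C(u+1)=\G_\R(u+1)\G_\R(u+2)$, so this pair's $R(s)$-factor is $\G_\R(u-1)/\G_\R(u+1)=2\pi/(u-1)$. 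On the other side, $D(s)$ contributes $\G_\R(1-u)\G_\R(2-u)$ while the target $\G_\C(2-u)=\G_\R(2-u)\G_\R(3-u)$, giving $R^{\widetilde\pi}(1-s)$-factor $=\G_\R(1-u)/\G_\R(3-u)=2\pi/(1-u)$. These differ by $-1$, not $+1$; more generally the discrepancy is a sign depending on the shift, so the rational prefactors do \emph{not} simply drop out of the ratio. Your argument as written would therefore produce the wrong $\omega$ (or, in the twisted case, would not even establish that the leftover constant has modulus one).

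The paper avoids this pitfall entirely by never separating numerator and denominator. Instead it proves and uses the identity
\[
G_{\eta_1}(s+z_1)\,G_{\eta_2}(s+z_2)\ =\ i^{\,z_1-z_2+1}\,\frac{\G_\C(s+z_1)}{\G_\C(1-s-z_2)}
\qquad\text{when }z_1-z_2\in 2\Z+\eta_1-\eta_2+1,
\]
which follows from $\G(s)\G(1-s)=\pi\csc(\pi s)$ together with the factorial recursion. Applied to each $GL(1)\times GL(2)$ pair and each $GL(2)\times GL(2)$ quadruple, this identity converts the relevant $G$-factors \emph{directly} into the ratio $\G_\C(\cdot)/\G_\C(\cdot)$ matching $L(s,\Pi_3)/L(1-s,\widetilde\Pi_3)$ and $L(s,\Pi_5)/L(1-s,\widetilde\Pi_5)$, with the constant $i^{z_1-z_2+1}$ already explicit --- so there is no rational prefactor to track at all, and the signs you are missing are built into the exponent of $i$. (For the two $\G_\R$-type classes, $\Pi_2$ and $\Pi_4$, your argument and the paper's coincide: there a single $G$-factor already gives the right $\G_\R/\G_\R$ ratio via (\ref{eisen3b}).) Your route is repairable --- you would have to compute $R(s)/R^{\widetilde\pi}(1-s)$ class by class and feed the resulting signs into $\omega$ --- but the cleaner move is to pair the $G$-factors first and collapse them into $\G_\C$-ratios using the identity above. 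Incidentally, the paper works with the embedding (\ref{lamfe1}) (all $GL(1)$-blocks first, then all $GL(2)$-blocks) rather than the one in proposition~\ref{niceembedding}; this makes no difference for the statement, since the product over all pairs depends only on the multiset $\{(\l_i,\d_i)\}$.
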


The dual $L$-factor $L_\infty(s,\tilde\pi,Ext^2\otimes \chi)$ equals $\overline{L(\bar{s},\pi,Ext^2\otimes\chi)}$,
i.e., the $L$-factor produced by the above recipe, but with each $s_i$ replaced by $\overline{s_i}$ (or
equivalently by $-s_i$, in light of (\ref{genericunitarydual}a)).

\begin{proof}
The dual representation $\pi'_\infty$ is shown in  \cite[(A.1)~and~(A.2)]{mirabolic} to embed into the principal
series with parameters
\begin{equation}\label{lamfe1}
\gathered
\l \  \ = \ \    \textstyle{(-s_1,\,-s_2,\,\ldots,-s_{r_1},\,-s_{r_1+1}-\f{k_1-1}{2},\,
   -s_{r_1+1}+
   \f{k_1-1}{2},\,\ldots,\qquad\qquad\qquad } \\   \textstyle{\qquad \qquad\qquad \qquad \ldots,\,-s_{r_1+r_2}-\f{k_{r_2}-1}{2},\,
    -s_{r_1+r_2}+\f{k_{r_2}-1}{2})}
\\ \text{and}\ \ \
    \d \ \ = \ \
    (\e_1, \, \e_2, \, \ldots,\e_{r_1} , \, k_1 \, ,0 \, ,k_2 \, ,0 \,
    , \, \ldots, \, k_{r_2}, \, 0)\,.
\endgathered
\end{equation}
  In order to compute the absolute value appearing in the formula for $L(s,\Pi_5)$ in (\ref{fivelpieces}), we make the assumption that  $k_j\ge k_\ell$ for $j \le
\ell$, which we may without loss of generality. The terms in the product on the right hand side of
(\ref{neededratio}) can be broken up into four groups, as follows: \vspace{.5cm}

\noindent for $1 \le i < k \le r_1$:
\begin{equation}\label{fety1}
\aligned
   G_{\d_i+\d_k}(s-\l_i-\l_k)  \ \ & = \  \  G_{\e_i+\e_k}(s+s_i+s_k) \\ & = \ \
i^{\e_{ik}}\f{\G_\R(s+s_i+s_k+\e_{ik})}{\G_\R(1-s-s_i-s_k+\e_{ik})}\,
;
\endgathered
\end{equation}
for $
  i \le r_1$,  $j \le r_2$:
\begin{equation}\label{fety2}
\gathered G_{\d_i+\d_{r_1+2j-1}}(s-\l_i-\l_{r_1+2j-1})
G_{\d_i+\d_{r_1+2j}}(s-\l_i-\l_{r_1+2j}) \\ = \ \
G_{\e_i+k_j}(s+s_i+s_{r_1+j}+\textstyle{\f{k_j-1}{2}} )
G_{\e_i}(s+s_i+s_{r_1+j}-\textstyle{\f{k_j-1}{2}} ) \\ = \ \
 i^{k_j} \f{\G_\C(s+s_i+s_{r_1+j}+\f{k_j-1}{2})}{\G_\C(1
    -s-s_i-s_{r_1+j}+\f{k_j-1}{2})} \, ;
\endgathered
\end{equation}
for $1 \le j < \ell \le r_2$:
\begin{equation}\label{fety3}
    \gathered
G_{\d_{r_1+2j-1}+\d_{r_1+2\ell-1}}(s-\l_{r_1+2j-1}-\l_{r_1+2\ell-1})
G_{\d_{r_1+2j}+  \d_{r_1+2\ell-1}}(s-\l_{r_1+2j}  -\l_{r_1+2\ell-1})
 \qquad\qquad\qquad \qquad\qquad\qquad\qquad\qquad\\
\times \,G_{\d_{r_1+2j-1}+\d_{r_1+2\ell}}
(s-\l_{r_1+2j-1}-\l_{r_1+2\ell}) G_{\d_{r_1+2j}+  \d_{r_1+2\ell}}
(s-\l_{r_1+2j}  -\l_{r_1+2\ell})
\qquad\qquad\qquad\qquad\qquad\qquad\qquad \\
= \ \
G_{k_j+k_\ell}(s+s_{r_1+j}+s_{r_1+\ell}+\textstyle{\f{k_j+k_\ell-2}{2}})
G_{k_\ell}(s+s_{r_1+j}+s_{r_1+\ell}-\textstyle{\f{k_j-k_\ell}{2}})
\qquad\qquad\qquad\qquad\qquad\qquad\qquad\qquad \\
\times \,
G_{k_j}(s+s_{r_1+j}+s_{r_1+\ell}+\textstyle{\f{k_j-k_\ell}{2}})
G_{0}(s+s_{r_1+j}+s_{r_1+\ell}-\textstyle{\f{k_j+k_\ell-2}{2}})
\qquad\qquad\qquad\qquad\qquad\qquad\qquad
\\ =
\f{i^{k_j}\,\G_\C(s+s_{r_1+j}+s_{r_1+\ell}+\f{k_j+k_\ell-2}{2})}{\G_\C(1-
s-s_{r_1+j}-s_{r_1+\ell}+|\f{k_j-k_\ell}{2}|)} \times
\f{i^{k_j}\,\G_\C(s+s_{r_1+j}+s_{r_1+\ell}+ |\f{k_j-k_\ell}{2}|)}{\G_\C(1-
s-s_{r_1+j}-s_{r_1+\ell}+\f{k_j+k_\ell-2}{2})} \, ; \qquad \qquad
\qquad \qquad \qquad \qquad \qquad \qquad
    \endgathered
\end{equation}
    and  for $ 1 \le j \le r_2 $:
\begin{equation}\label{fety4}
\aligned
   G_{\d_{r_1+2j-1}+\d_{r_1+2j}}(s-
    \l_{r_1+2j-1}-\l_{r_1+2j}) \ \ & = \ \ G_{k_j}(s+2s_{r_1+j})   \\
 & = \ \ i^{\e_j'}
\f{\G_\R(s+2s_{r_1+j}+\e_j')}{\G_\R(1-s-2s_{r_1+j}+\e_j')} \,.
\endaligned
\end{equation}
In simplifying (\ref{fety2}) and (\ref{fety3}) we have used the identity
\begin{equation}\label{gcancel}\gathered
    z_1-z_2 \  \in \  2\Z+\eta_1-\eta_2+1  \ \ \Longrightarrow
    \qquad\qquad\qquad\qquad\qquad\qquad\qquad\qquad\ \ \ \\
   \qquad\qquad\ \ \ G_{\eta_1}(s+z_1)G_{\eta_2}(s+z_2) \ \ =  \ \
    i^{z_1-z_2+1}\f{\G_\C(s+z_1)}{\G_\C(1-s-z_2)}\,,
    \endgathered
\end{equation}
which follows from the functional equation $\G(s)\G(1-s)=\pi\csc(\pi s)$ and the factorial property
$\G(s+1)=s\G(s)$.

The product over $1\le i < k \le r_1$ in (\ref{fety1}) equals $\f{L(s,\Pi_4)}{L(1-s,\widetilde{\Pi}_4)}\prod_{1\le
i < k \le r_1} i^{\e_{ik}}$.  Similarly, the product over $i\le r_1$ and $j \le r_2$ in (\ref{fety2}) is
$\f{L(s,\Pi_3)}{L(1-s,\widetilde{\Pi}_3)}\prod_{j\le r_2}i^{r_1k_j}$; the product over $1\le j < \ell \le r_2$ in
(\ref{fety3}) is $\f{L(s,\Pi_5)}{L(1-s,\widetilde{\Pi}_5)}\prod_{j< r_2}(-1)^{k_j(r_2-j)}$; and the product over $j
\le r_2$ in (\ref{fety4}) is $\f{L(s,\Pi_2)}{L(1-s,\widetilde{\Pi}_2)}\prod_{j\le r_2}i^{\e_j'}$.  Multiplying
these together proves (\ref{neededratio}), i.e., the untwisted case. In the twisted case, the expressions for
$L(s,\Pi_2)$ and $L(s,\Pi_4)$ must be replaced by (\ref{twotwistedpieces}) instead.  The proof remains the same,
except for changes in the overall multiplicative constant (always involving integral powers of $i$).
\end{proof}

  When $\pi$ is unramified at all
nonarchimedean places and  $\chi$ is trivial (corresponding to  untwisted, full level cusp forms), the global
completed $L$-function $\Lambda(s,\pi,Ext^2)=\prod_{p\le \infty}L_p(s,\pi,Ext^2)$ is fully determined by
(\ref{fivelpieces}) (for $p=\infty$), and by formula (\ref{extsqlocdef}) (for $p<\infty$). As a consequence of
(\ref{assumedfeagain}) and this proposition, we have the explicit functional equation
\begin{equation}\label{provedfe}
  \Lambda(s,\pi,Ext^2) \ \ = \ \ L_\infty(s,\pi,Ext^2)\,L(s,\pi,Ext^2) \  \ = \
  \ \omega  \, \Lambda(1-s,\tilde\pi,Ext^2)\,.
  \end{equation}
It is worth noting that our functional equation (\ref{assumedfeagain}) has a uniform description in all cases in
terms of the Casselman embedding, in marked contrast to the formulas for the $\Gamma$-factors  given in
(\ref{fivelpieces}).

\section{The full holomorphy of $L^S(s,\pi,Ext^2\otimes\chi)$  }
\label{sec:fullholomorphy}

 Finally, our last order of
business is to prove that   $L^S(s,\pi,Ext^2\otimes\chi)$  is fully holomorphic, that is holomorphic on
$\C-\{0,1\}$ with the possible exception of  poles at $s=0$ and $1$   which we show are at most simple.   That
behavior at $s=0$ and 1 was previously obtained by
 Jacquet and Shalika
\cite{jsextsq}; we have chosen to include this aspect as part of our argument as well because it requires little
additional overhead.  Recall   that theorem~\ref{mainthmwiths} was proved for $GL(m)$, $m$ odd, by Kim
\cite{kimgl4,kimextsq}.  This justifies the specialization to  $GL(2n)$  throughout the paper.

Let us write $T=S-\{\infty\}$ for the nonarchimedean places of $S$. It was shown in
(\ref{whatsentire}-\ref{recall75}) that the value of the pairing $P(\tau,E(s))$, or equivalently the global product
\begin{equation}\label{thisisentire}
    {\mathcal G}(s)\,\cdot\,\prod_{p\in T}\Psi_p(s,W_p,\Phi_p)\,\cdot\, L^T(s,\pi,Ext^2\otimes\chi)\,,
\end{equation}
is fully holomorphic  with at most simple poles  for any choice of local data $W_p$ and $\Phi_p$ at the finite set
of primes $T$ for which either $\pi$ or $\chi$ are ramified.

Dustin Belt \cite{belt} has recently shown that for any fixed value of $s\in \C$, there exists some choice of local
data $W_p$ and $\Phi_p$ such that $\Psi_p(s,W_p,\Phi_p)$ is a nonzero complex number.  In light of this, the
product over $p\in T$ in (\ref{thisisentire}) is irrelevant to its holomorphy:
\begin{equation}\label{thisisentire2}
     {\mathcal G}(s)\, L^T(s,\pi,Ext^2\otimes\chi)   \ \ \ \text{is fully holomorphic   with at most simple poles.}
\end{equation}  His result  applies equally to the archimedean analog of the integrals $\Psi_p(s,W_p,\Phi_p)$ which appear in Jacquet-Shalika's unfolded integral, which is given by formula (\ref{unfoldingprop3padic}) when $p=\infty$ (in this situation  $\Phi_\infty$ is a Schwartz function on $\R^n$).
Thus he also proves the full holomorphy with at most simple poles of both (\ref{thisisentire}) or
(\ref{thisisentire2}) without the factor ${\mathcal G}(s)$, in particular theorem~\ref{mainthmwiths} for any subset
$S$ that does not include $\infty$.  We shall thus assume from now on that $\infty\in S$. Belt's result reduces
theorem~\ref{mainthmwiths} to proving the full holomorphy with at most simple poles of
 \begin{equation}\label{thisisentire3}
    L_\infty(s,\pi,Ext^2\otimes\chi)\,\cdot\,\prod_{p\in T}\Psi_p(s,W_p,\Phi_p)\,\cdot\, L^T(s,\pi,Ext^2\otimes\chi)\,.
\end{equation}
This is a stronger condition than (\ref{thisisentire}), since it implies it by dividing by $\Gamma$-functions
(which never vanish).
 The full holomorphy  with at most simple poles   of (\ref{thisisentire3}) in the range  $\Re{s} \ge 1/2$ -- and hence of   $L^S(s,\pi,Ext^2\otimes\chi)$   in that range as well --
 is an immediate  consequence of  the following proposition.
  \begin{prop}\label{fullproveprop}Assume $\pi_\infty'$ has the embedding described in proposition~\ref{niceembedding}.  Then
\begin{equation}\label{fullprove1}
  L_\infty(s,\pi,Ext^2\otimes\chi)  \ \ \
     \text{is holomorphic and nonzero in~}\Re s \,\ge \,1
\end{equation}
and  the quotient
\begin{equation}\label{fullprove2}
\gathered \f{
    L_\infty(s,\pi,Ext^2\otimes\chi)}{{\mathcal G}(s)}~~~
    \text{is holomorphic in~}1/2\,\le\,\Re{s}\,\le\,1.
\endgathered
\end{equation}
\end{prop}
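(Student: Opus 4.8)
The plan is to prove both assertions by direct inspection of Gamma factors. The two inputs I will lean on are: the description of $\pi_\infty$ as a generic unitary induced representation $I(P;\sigma_1[s_1],\dots,\sigma_r[s_r])$ with $|\Re s_i|<\tfrac12$ by (\ref{genericunitarydual}b); and the uniform product expression for the archimedean factor in terms of the Casselman parameters, $L_\infty(s,\pi,Ext^2\otimes\chi)=c\prod_{1\le i<j\le 2n}\G_\R(s-\l_i-\l_j+\d'_{ij})$, where $c\ne0$ and $\d'_{ij}\in\{0,1\}$ is the residue of $\d_i+\d_j+\eta$ modulo $2$. This is what underlies the remark following (\ref{provedfe}); it can be checked by matching (\ref{fivelpieces})--(\ref{twotwistedpieces}) against the explicit parameters (\ref{lamniceembed})--(\ref{deltaniceembed}), and it is consistent with (\ref{neededratio}) of Proposition~\ref{samefe} and the formula (\ref{eisen3b}) for $G_\d$.

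For (\ref{fullprove1}): $L_\infty(s,\pi,Ext^2\otimes\chi)$ is a finite product of Gamma factors, hence nowhere zero, with poles only where some argument is a non-positive integer. So it suffices that in $\Re s\ge1$ every argument has strictly positive real part. In the product form the arguments are $s-\l_i-\l_j+\d'_{ij}$, so this is the bound $\Re(\l_i+\l_j)<1$ for every pair, which is immediate from (\ref{lamniceembed}) and $|\Re s_i|<\tfrac12$. Equivalently, reading the arguments from (\ref{fivelpieces})--(\ref{twotwistedpieces}), each is $s$ plus a sum of at most two of the $s_i$ (or a single $2s_{r_1+j}$) plus a non-negative shift --- $\e'_{j\eta},\e_{ik\eta}\in\{0,1\}$, and $\tfrac{k_j-1}2\ge\tfrac12$, $\tfrac{k_j+k_\ell-2}2\ge1$, $\tfrac{|k_j-k_\ell|}2\ge0$ since $k_j\ge2$ --- so its real part exceeds $\Re s-1\ge0$. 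Holomorphy and non-vanishing in $\Re s\ge1$ follow at once.

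For (\ref{fullprove2}): substitute the product expression and $\mathcal G(s)=\prod_{i+j\le 2n}i^{\d'_{ij}}\G_\R(s-\l_i-\l_j+\d'_{ij})\big/\G_\R(1-s+\l_i+\l_j+\d'_{ij})$ from (\ref{halfcomplete}) and (\ref{eisen3b}). The factors $\G_\R(s-\l_i-\l_j+\d'_{ij})$ with $i+j\le 2n$ cancel between the two numerators, leaving
\[
\frac{L_\infty(s,\pi,Ext^2\otimes\chi)}{\mathcal G(s)}\ =\ c'\!\!\prod_{\substack{i<j\\ i+j\ge 2n+1}}\!\!\G_\R(s-\l_i-\l_j+\d'_{ij})\ \cdot\!\!\prod_{\substack{i<j\\ i+j\le 2n}}\!\!\G_\R(1-s+\l_i+\l_j+\d'_{ij}),
\]
a product of $\G_\R$'s times a non-zero constant, hence zero-free; its poles form arithmetic progressions of step $\pm2$ whose largest, resp.\ smallest, member sits at $\Re s=\Re(\l_i+\l_j)-\d'_{ij}$ (first product) and $\Re s=1+\Re(\l_i+\l_j)+\d'_{ij}$ (second product). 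So I must show that no such progression meets $[\tfrac12,1]$; this reduces to $\Re(\l_i+\l_j)<\tfrac12$ for every pair with $i+j\ge 2n+1$ and $\Re(\l_i+\l_j)\ge0$ for every pair with $i+j\le 2n$ (plus dispatching the deeper members). Here the explicit shape (\ref{lamniceembed})--(\ref{deltaniceembed}) is used: the sign-character coordinates have $|\Re\l_k|<\tfrac12$, with the negative ones at small indices and the positive ones at large indices by the ordering $\Re s_1\le\dots\le\Re s_{r_1}$ and the reflection $\Re s_i=-\Re s_{r_1+1-i}$, while a discrete-series block $D_{k_p}$ contributes a consecutive-index pair $\l_a,\l_b$ with $\l_a+\l_b=-2s_{r_1+p}$, $|\Re(\l_a+\l_b)|<1$, the sign of $\Re s_{r_1+p}$ being tied to the index by the ordering of the discrete-series parameters together with (\ref{realnegequal}). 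Tracking which pairs fall into which index family and bounding $\Re(\l_i+\l_j)$ accordingly --- and, where the $\l$ are too degenerate for the strict bounds, perturbing $\l$ in a small open set as in the proof of (\ref{recall6}) and invoking Jacquet's holomorphic continuation --- closes the argument.

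The routine parts are (\ref{fullprove1}) and the cancellation in (\ref{fullprove2}); the hard part will be the case-analysis showing that, for the nice embedding, the two index families $\{i+j\le 2n\}$ and $\{i+j\ge 2n+1\}$ keep $\Re(\l_i+\l_j)$ on the correct side of the critical windows. This is a finite but fiddly bookkeeping, and it is exactly here that the balanced form of the embedding in Proposition~\ref{niceembedding} --- the placement of the two halves of the sign-character parameters on either side of the discrete-series pairs --- is used in an essential way: it forces the poles of $L_\infty$ that could otherwise lie in the strip to come from pairs with $i+j\le 2n$, i.e.\ from the very factors that $\mathcal G(s)$ carries and therefore cancels.
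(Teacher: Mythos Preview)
Your argument for (\ref{fullprove1}) via the explicit factors (\ref{fivelpieces})--(\ref{twotwistedpieces}) is correct and matches the paper's.  But your approach to (\ref{fullprove2}) rests on a claim that is false: the identity
\[
L_\infty(s,\pi,Ext^2\otimes\chi)\ =\ c\!\!\prod_{1\le i<j\le 2n}\!\!\G_\R(s-\l_i-\l_j+\d'_{ij})
\]
does not hold in general.  It is valid when $r_2=0$ (no discrete series), but fails as soon as a $D_k$ occurs.  For instance, with $r_1=0$, $r_2=2$, $k_1=k_2=k$, the $\Pi_5$ contribution to $L_\infty$ is $\G_\C(s+s_1+s_2+k-1)\,\G_\C(s+s_1+s_2)$, whereas your product over the four cross pairs contains the factor $\G_\R(s+s_1+s_2-(k-1))$ from the pair $(2,4)$; the ratio is a nontrivial polynomial in $s$, not a constant.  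The remark following (\ref{provedfe}) is precisely the observation that the \emph{ratio} in the functional equation has a uniform description in the Casselman parameters while the $\G$-factors themselves do \emph{not} --- the opposite of what you inferred.  Relatedly, your auxiliary bound $\Re(\l_i+\l_j)<1$ used for (\ref{fullprove1}) is also false (take $i,j$ to be the second indices of two discrete-series blocks: $\Re(\l_i+\l_j)$ grows with the weights), and the bound $\Re(\l_i+\l_j)\ge 0$ for $i+j\le 2n$ used for (\ref{fullprove2}) is false for the same reason (take the two first indices instead).  So even ignoring the product-formula error, the case analysis you sketch cannot close.  The appeal to a perturbation and Jacquet's continuation is a red herring here: those tools concern Whittaker functions, not the explicit $\G$-products at hand.

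The paper's proof proceeds differently.  It does not attempt to rewrite $L_\infty$ uniformly; instead it lists, directly from (\ref{fivelpieces})--(\ref{twotwistedpieces}), the finitely many poles of $L_\infty$ that can land in $\frac12\le\Re s\le1$ --- these come only from $\Pi_2$, $\Pi_4$, and the $k_j=k_\ell$ terms of $\Pi_5$ --- and then partitions the index set $\{(i,j):i+j\le 2n\}$ into six subsets $\Sch_1,\dots,\Sch_6$ according to which blocks of the nice embedding $i$ and $j$ lie in.  For each piece $\mathcal G_k(s)$ one checks separately that it has no zeros in the strip, and then matches each listed pole of $L_\infty$ with a pole of one of the $\mathcal G_k$.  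The discrete-series pairs are handled using the identity (\ref{gcancel}), which packages the corresponding pair of $G_\d$-factors into a $\G_\C$ ratio --- exactly the mechanism your uniform formula misses.
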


Before turning to the proof, let us first see how this implies the rest of  \thmref{mainthmwiths}, namely the full
holomorphy  with at most simple poles in the half plane $\Re{s}<1/2$.   The functional equation
(\ref{adelicpenufe}) relates the pairing for $\tau$ at $1-s$ the left hand side, to a  pairing for a translate of
$\widetilde \tau$ at $s$ on the right hand side.  The left hand side was computed in (\ref{whatsentire}) to be a
fourth root of unity times
\begin{equation}\label{westside1}
    {\mathcal G}(1-s)\,\cdot\,\prod_{p\in T}\Psi_p(1-s,W_p,\Phi_p)\,\cdot\,  L^T(1-s,\pi,Ext^2\otimes \pi)\,,
\end{equation}
whereas the right hand side is equal to a linear combination of expressions of the form
\begin{equation}\label{westside2}
\gathered
    N^{2ns-s-n}\,\cdot\,\prod_{j=1}^n G_{\d_{n+j}+\d_{n+1-j}+\eta}(s+\l_{n+j}+\l_{n+1-j})
   \,\cdot\, \widetilde{\mathcal G}(s)  \ \times \\ \times \ \prod_{p\in T}\Psi_p(s,\widetilde W_p,\widetilde \Phi_p)\,\cdot\,   L^T(s,\tilde\pi,Ext^2\otimes\chi\i)\,,
\endgathered
\end{equation}
where  $\widetilde{\mathcal G}(s)$ was defined in (\ref{halfcompletetilde}).
  We use the notation $\Psi_p(s,\widetilde W_p,\widetilde\Phi_p)$ to refer to the $p$-adic local integrals that arise in the unfolding (\ref{unfoldingprop}) for the left translate of $\widetilde \tau$.   Note that the set $S$ already contains the ramified places for this contragredient local data, and hence does not need to be enlarged to account for the translation  in (\ref{adelicpenufe}).

Observe that
\begin{equation*}\label{westside3}
\gathered
 \!\!\!\!\!\!\!\!\!\!\!\!\!\!\! \!\!\!\!\!\!\!\!\!\!\!\!\!\!\!\!\!\!\!\!
    \!\!\!\!\!\!\!\!\!\!\!\!\!\!\!\!\!\!\!\!\!\!\!\!\!\!\!\!
    \!\!\!\!\!  \prod_{j=1}^n G_{\d_{n+j}+\d_{n+1-j}+\eta}(s+\l_{n+j}+\l_{n+1-j}) \, \f{\widetilde{\mathcal G}(s)}{{\mathcal G}(1-s)} \ \ =  \\ \ \ \ \  \pm
    \prod_{1\le i < j \le
     2n} G_{\d_i+\d_j}(s+\l_i+\l_j) \ \
     = \ \ \pm  \prod_{1\le i < j \le
     2n} G_{\d_i+\d_j}(1-s-\l_i-\l_j)\i
\endgathered
\end{equation*}
(with signs that we do not need to determine) because of (\ref{eisen3b}).  Proposition~\ref{samefe} identifies this
product with a fourth root of unity times
$\f{L_\infty(s,\tilde\pi,Ext^2\otimes\chi\i)}{L_\infty(1-s,\pi,Ext^2\otimes\chi)}$. Returning to
(\rangeref{westside1}{westside2}), we deduce that
\begin{equation}\label{westside4}
    L_\infty(1-s,\pi,Ext^2\otimes \chi)\,\cdot\,\prod_{p\in T}\Psi_p(1-s,W_p,\Phi_p)\,\cdot\, L^T(1-s,\pi,Ext^2\otimes \chi)
\end{equation}
is equal to a linear combination of expressions of the form
\begin{equation}\label{westside5}
    N^{2ns-s-n}\,
    \cdot \, L_\infty(s,\tilde\pi,Ext^2\otimes\chi\i) \,\cdot\,\prod_{p\in T}\Psi_p(s,\widetilde W_p,\widetilde\Phi_p)\,\cdot\,   L^T(s,\tilde\pi,Ext^2\otimes\chi\i)\,,
\end{equation}
all of which are fully holomorphic  with at most simple poles  in $\Re{s} \ge 1/2$.  Hence (\ref{westside4}) is as
well, i.e., (\ref{thisisentire3}) is fully holomorphic   with at most simple poles   in $\Re s \le 1/2$, which we
have already seen is enough to imply \thmref{mainthmwiths}.

\begin{proof}
We write $\pi_\infty =I(P;\sgn^{\e_1}[s_1],\ldots,\sgn^{\e_{r_1}}[s_{r_1}],
    D_{k_1}[s_{r_1+1}],\ldots,D_{k_{r_2}}[s_{r_1+r_2}])$,
where  $\Re{s_1}\le \Re{s_2}\le \cdots \le \Re{s_{r_1}}$, $\Re{s_{r_1+1}}\le \Re{s_{r_1+2}}\le \cdots \le
\Re{s_{r_1+r_2}}$, and (\ref{realnegequal}) holds.  This ordering implies that
\begin{equation}\label{spos}
    \Re{s_i} \ < \ 0 \ \ \ \Longrightarrow \ \ \ 1 \le i \le r_1/2
    \ \
    \text{~~or~~} \ \  r_1+1 \le i \le r_1 + r_2/2\,.
\end{equation}
 Recall that each $k_i\ge 2$, and that the unitary dual estimate
 (\ref{genericunitarydual}b) states that
  each $|\Re{s_j}|<1/2$. As  $\G(s)$ is holomorphic and nonzero in $\Re{s}>0$,
  it follows that  each of the factors comprising $L_\infty(s,\pi,Ext^2\otimes\chi)$
 in (\ref{fivelpieces}-\ref{twotwistedpieces})
 is holomorphic and nonzero for  $\Re s \ge 1$.  This proves assertion
 (\ref{fullprove1}).

Our strategy for proving (\ref{fullprove2}) is as follows:~we will first identify the singularities of
$L_\infty(s,\pi,Ext^2\otimes\chi)$ in $\Omega = \{\Re{s}\ge 1/2\}$, and then show that ${\mathcal G}(s)$ has poles
of equal or greater order at those points. Let us now identify these poles from the factors in
(\ref{fivelpieces}-\ref{twotwistedpieces}). Poles occur for $L(s,\Pi_2\otimes\sgn^\eta)$ in $\Omega$   exactly when
\begin{equation}\label{polar1}
\gathered
s\,=\,-2s_{r_1+j}\ \text{~and~} \ k_j\equiv \eta\!\!\!\pmod 2,
\qquad\qquad\qquad\qquad\qquad\qquad\qquad \ \ \\ \qquad
\text{for some
     ~$j\le r_2/2$~ which has ~$1/4 \le -\Re
s_{r_1+j} <  1/2$.}
\endgathered
\end{equation}
Since $|\Re{s_i}|+|\Re{s_{r_1+j}}| < 1$ and $\f{k_j-1}{2} \ge 1/2$, $L(s,\Pi_3)$ is holomorphic in $\Omega$. Poles
for $L(s,\Pi_4\otimes\sgn^\eta)$ in $\Omega$ occur exactly when
\begin{equation}\label{polar2}
\gathered
s=-s_i-s_k\ \text{~and~} \ \e_i+\e_k\equiv \eta   \imod 2,
\qquad\qquad\qquad\qquad\qquad\qquad\\ \qquad
\text{for some
     ~$i<k\le r_1/2$~   and
~$1/2 \le -\Re{s_i}-\Re{s_k} <   1$.}
\endgathered
\end{equation}
  Finally, poles for $L(s,\Pi_5)$  in
$\Omega$ occur exactly when
\begin{equation}\label{polar3}
\gathered
s=-s_{r_1+j}-s_{r_1+\ell}\ \text{~and~} \ k_j=k_\ell\,,
\qquad\qquad  \ \
\qquad\qquad\qquad\qquad\qquad\qquad\\ \qquad
\text{for some
     ~$j<\ell \le r_2/2$~  and
~$1/2 \le -\Re{s_{r_1+j}}-\Re{s_{r_1+\ell}}< 1 $.}
\endgathered
\end{equation}  The list (\ref{polar1}-\ref{polar3})
describes the poles, with multiplicity, of $L_\infty(s,\pi,Ext^2\otimes\chi)$ in $\Omega$.

We will now show that each of these potential singularities in $\Omega = \{\Re{s}\ge 1/2\}$, including
multiplicity, is also a singularity of the product
\begin{equation}\label{lastestadded}
{\mathcal G}(s) \ \  = \ \ \prod_{\srel{1\le
i < j \le 2n}{i+j\le 2n}}G_{\d_i+\d_j+\eta}(s-\l_i-\l_j)\,.
\end{equation}
At this point we utilize the embedding described in proposition \ref{niceembedding}.
     We break up
the factors in this product into 6 different types, depending on which blocks $i$ and $j$ belong to. Recall that
$\l$ and $\d$, given in (\ref{lamniceembed}) and (\ref{deltaniceembed}), are arranged according to a partition of
$n$ of the form $(1,1,\ldots,1,2,2,\ldots,2,1,1\ldots,1)$, where there are $r_1/2$ ones, followed by $r_2$ twos,
and followed again by $r_1/2$ ones. The following are the six subsets that the indices $\{1\le i < j \le 2n | i+j
\le 2n\}$ are partitioned into.  The first subset, $\Sch_1$, consists of pairs $(i,j)$ corresponding to entries in
the first group of $r_1/2$ 1-blocks.  The second set, $\Sch_2$, corresponds to pairs $(i,j)$ where $i$ is in the
first set of 1-blocks, and $j$ is in the second set of 1-blocks. The third set $\Sch_3$ corresponds to pairs
$(i,j)$ where $i$ is in the first set of 1-blocks and $j$ is in the set of 2-blocks.  The remaining sets $\Sch_4$,
$\Sch_5$, and $\Sch_6$ correspond to pairs $(i,j)$ within the 2-blocks.  The set $\Sch_4$ corresponds to pairs
$(i,j)$ which are in the same 2-block.  The set $\Sch_5$ consists of pairs $(i,j)$ such that $i$ lies in the
$\ell$-th 2-block and $j$ lies in the $r_2+1-\ell$-th 2-block, for $\ell \le r_2/2$ (recall that there are $r_2$
2-blocks). Finally $\Sch_6$ corresponds to pairs $(i,j)$ which are in different 2-blocks, but not the ones in
$\Sch_5$.

Let ${\mathcal G}_k(s) = \prod_{(i,j)\in \Sch_k} G_{\d_i+\d_j+\eta}(s-\l_i-\l_j)$. We will now show that each of
the singularities from (\ref{polar1}-\ref{polar3}) is also a singularity, of the same order, of at least one of the
${\mathcal G}_i(s)$, and is not a zero of any ${\mathcal G}_i(s)$.  This will finish the proof of
(\ref{fullprove2}).  The reason we need to group the factors into ${\mathcal G}_1(s),\ldots,{\mathcal G}_6(s)$ is
that there may be some cancelation within the factors which comprise some of these partial products.  Let us now
identify what they are.  We have
\begin{align}\label{s1p}
{\mathcal G}_1(s) \  \ & = \ \     {\prod}_{1\le i < j \le r_1/2}
G_{\e_i+\e_j+\eta}(s+s_i+s_j)\,,\\
\label{s2p}
   {\mathcal G}_2(s) \ \ & = \ \  {\prod}_{1\le i < j \le r_1/2}
    G_{\e_i+\e_j+\eta}(s+s_i+s_{r_1+1-j}) \,,  \ \ \ \  \text{and}\\
    \label{s3p}
{\mathcal G}_3(s) \ \ & = \ \    {\prod}_{\stackrel{\scriptstyle{i\le
r_1/2}}{j\le r_2}}\
    i^{k_j}\f{\G_\C(s+s_i+s_{r_1+j}+\f{k_j-1}{2})}{\G_\C(1-
    s-s_i-s_{r_1+j}+\f{k_j-1}{2})}
\end{align}
(see (\ref{fety2})).  The pairs in $\Sch_4$ are indices $(i,j)$ of the form $(r_1/2+2\ell-1,r_1/2+2\ell)$, where
$\ell$ ranges from 1 to $r_2/2$.  For such an index $(i,j)$, we have that $\d_i=k_\ell$, $\d_j=0$,
$\l_i=-s_{r_1+\ell}-\f{k_\ell-1}{2}$, and $\l_j =-s_{r_1+\ell}+\f{k_\ell-1}{2}$.  Therefore ${\mathcal G}_4(s)$
equals
\begin{equation}\label{s4p}
{\mathcal G}_4(s) \ \ = \ \
{\prod}_{\ell\,=\,1}^{r_2/2}G_{k_\ell+\eta}(s+2 s_{r_1+\ell})\,.
\end{equation}
The pairs in $\Sch_5$ are similarly parameterized by pairs  $(i,j)$ of the form
$(r_1/2+2\ell-1,r_1/2+2(r_2+1-\ell)-1)$, with $\ell \le r_2/2$. These lie in the $\ell$-th and $(r_2+1-\ell)$-th
blocks, respectively, and satisfy $i+j=n$; therefore they are the only indices in their blocks.  We find that
$\d_i=k_\ell$, $\d_j=k_{r_2+1-\ell}=k_\ell$, $\l_i=-s_{r_1+\ell} -\f{k_\ell-1}{2}$, and
$\l_j=-s_{r_1+r_2+1-\ell}-\f{k_{r_2+1-\ell}\,-\,1}{2}$, and that
\begin{equation}\label{s5p}
\aligned
    {\mathcal G}_5(s) \ \ = \ \ {\prod}_{\ell\,=\,1}^{r_2/2}
    G_{\eta}(s+s_{r_1+\ell}+s_{r_1+r_2+1-\ell}+k_\ell-1)
    \, .
\endaligned
\end{equation}
Finally, $\Sch_6$ consists of the indices in pairs of 2-blocks which have not been accounted for.  These are the
$\ell_1$-th and $\ell_2$-th of the $2$-blocks, where $\ell_1+\ell_2 \le r_2$ and $\ell_1<\ell_2$. Those conditions
are necessary for the indices to not be in $\Sch_4$ or $\Sch_5$, and their sum to  be $\le n$.  The product of
$G_{\d_i+\d_j+\eta}(s-\l_i-\l_j)$ over pairs in these blocks is given in (\ref{fety3}), so ${\mathcal G}_6(s)$
equals
\begin{equation}\label{s6p}
\gathered
    {\prod}_{\stackrel{\scriptstyle{\ell_1<\ell_2}}{\ell_1+\ell_2\le
    r_2}}  \( (-1)^{k_{\ell_1}}\,
     \f{\G_\C(s+s_{r_1+\ell_1}+s_{r_1+\ell_2}
    +\f{k_{\ell_1} + k_{\ell_2}-2}{2})
}{\G_\C(1-s-
    s_{r_1+\ell_1}-s_{r_1+\ell_2}
    +\f{k_{\ell_1} + k_{\ell_2}-2}{2})} \ \times \qquad \right.
    \\ \left. \qquad\qquad\qquad\qquad
    \f{
    \G_\C(s+s_{r_1+\ell_1}+s_{r_1+\ell_2}+|\f{k_{\ell_1}-k_{\ell_2}}{2}|)}
    {
    \G_\C(1-s-s_{r_1+\ell_1}-s_{r_1+\ell_2}+|\f{k_{\ell_1}-k_{\ell_2}}{2}|)}\ \).
    \endgathered
\end{equation}
We now claim that
\begin{equation}\label{zeroclaim}
    \text{each of~}{\mathcal G}_1(s), \, {\mathcal G}_2(s), \,
    \ldots, \, {\mathcal G}_6(s) \  \text{is nonzero in~}1/2
    \,\le \, \Re{s} \,<\,1\,.
\end{equation}
We first recall that $G_0(z)$ is zero only for odd positive integers, and $G_1(z)$ is zero only for even positive
integers. The shifts in (\ref{s1p}), (\ref{s2p}), and (\ref{s4p}) all have real parts between $-1$ and $0$, so the
arguments of the $G$-functions in these products all have real part less than $1$ when $\Re{s}<1$.  This proves
(\ref{zeroclaim}) for these three products.  Since $\Re{s_{r_1+\ell}}=-\Re{s_{r_1+r_2+1-\ell}}$, the shift  in
(\ref{s5p}) has real part $k_\ell-1$. Therefore the arguments of the factors in (\ref{s5p}) are never integral when
$1/2\le \Re{s}<1$, proving (\ref{zeroclaim}) for $\mathcal G_5(s)$. The argument in the denominator of (\ref{s3p})
is never in $\Z_{\le 0}$ when $1/2\le \Re{s}<1$, because $\Re(-s_i-s_{r_1+j}+\f{k_j-1}{2})\ge \Re{-s_i} -\f 12 + \f
12 \ge 0$ when $i\le r_1/2$.  Similarly, $\Re(-s_{r_1+\ell_1}-s_{r_1+\ell_2})\ge 0$ in (\ref{s6p}), and the
arguments in its denominators are never in $\Z_{\le 0}$ for $s$ in this range.  That means the Gamma functions in
denominators of (\ref{s3p}) and (\ref{s6p}) do not have poles for $1/2\le \Re{s}<1$, and since $\G(s)$ is never
zero, ${\mathcal G}_3(s)$ and ${\mathcal G}_6(s)$ satisfy the claim in (\ref{zeroclaim}).

To finish, we will check that each of the poles in (\ref{polar1}-\ref{polar3}), with multiplicity, occurs in one of
(\ref{s1p}-\ref{s6p}); we have just seen that they are zeroes of none of them.  One sees readily that the poles
listed in (\ref{polar1}), (\ref{polar2}), and (\ref{polar3}), respectively, are found in (\ref{s4p}), (\ref{s1p}),
and (\ref{s6p}), respectively.  Since we had just checked in proving (\ref{zeroclaim}) that these poles are not
canceled by  zeros of any factors in those products, we have finished the proof of (\ref{fullprove2}) and hence
proposition~\ref{fullproveprop}.
\end{proof}

\begin{bibsection}

\begin{biblist}

\bib{belt}{thesis}{author={Belt, Dustin},title={On Local Exterior-Square L-functions},type={phd},school={Purdue
University},year={2011},note={\url{http://arxiv.org/abs/1108.2200}}}

\bib{bumpfriedberg}{article}{
    author={Bump, Daniel},
    author={Friedberg, Solomon},
     title={The exterior square automorphic $L$-functions on ${\rm GL}(n)$},
 booktitle={Festschrift in honor of I. I. Piatetski-Shapiro, Part II},
    series={Israel Math. Conf. Proc.},
    volume={3},
     pages={47\ndash 65},
 publisher={Weizmann},
     place={Jerusalem},
      date={1990},
}

\bib{Casselman:1980}{article}{
     author={Casselman, W.},
      title={Jacquet modules for real reductive groups},
  booktitle={Proceedings of the International Congress of Mathematicians (Helsinki, 1978)},
      pages={557\ndash 563},
  publisher={Acad. Sci. Fennica},
      place={Helsinki},
       date={1980},
}

\bib{Casselman:1989}{article}{
     author={Casselman, W.},
      title={Canonical extensions of Harish-Chandra modules to representations of $G$},
    journal={Canad. J. Math.},
     volume={41},
       date={1989},
     number={3},
      pages={385\ndash 438},
}

\bib{chm}{article}{
    author={Casselman, William},
    author={Hecht, Henryk},
    author={Mili{\v{c}}i{\'c}, Dragan},
     title={Bruhat filtrations and Whittaker vectors for real groups},
 booktitle={The mathematical legacy of Harish-Chandra 
 },
    series={Proc. Sympos. Pure Math.},
    volume={68},
     pages={151\ndash 190},
 publisher={Amer. Math. Soc.},
     place={Providence, RI},
      date={2000},
}

\bib{DixMal}{article}{
   author={Dixmier, Jacques},
   author={Malliavin, Paul},
   title={Factorisations de fonctions et de vecteurs ind\'efiniment
   diff\'erentiables},
   journal={Bull. Sci. Math. (2)},
   volume={102},
   date={1978},
   number={4},
   pages={307--330},
   issn={0007-4497},
}

\bib{fz}{article}{
    author={Fomin, Sergey},
    author={Zelevinsky, Andrei},
     title={Double Bruhat cells and total positivity},
   journal={J. Amer. Math. Soc.},
    volume={12},
      date={1999},
    number={2},
     pages={335\ndash 380},
      issn={0894-0347},
}

\bib{goja}{book}{
    author={Godement, Roger},
    author={Jacquet, Herv{\'e}},
     title={Zeta functions of simple algebras},
 publisher={Springer-Verlag},
     place={Berlin},
      date={1972},
     pages={ix+188},
}

\bib{goodmanwallach}{article}{
   author={Goodman, Roe},
   author={Wallach, Nolan R.},
   title={Whittaker vectors and conical vectors},
   journal={J. Funct. Anal.},
   volume={39},
   date={1980},
   number={2},
   pages={199--279},
   issn={0022-1236},
}

\bib{harristaylor}{book}{
    author={Harris, Michael},
    author={Taylor, Richard},
     title={The geometry and cohomology of some simple Shimura varieties},
    series={Annals of Mathematics Studies},
    volume={151},
 publisher={Princeton University Press},
      date={2001},
     pages={viii+276},
      isbn={0-691-09090-4},
} \bib{henniart}{article}{
    author={Henniart, Guy},
     title={Une preuve simple des conjectures de Langlands pour ${\rm
            GL}(n)$ sur un corps $p$-adique},
  language={French, with English summary},
   journal={Invent. Math.},
    volume={139},
      date={2000},
    number={2},
     pages={439\ndash 455},
      issn={0020-9910},
}

\bib{jseuler}{article}{
   author={Jacquet, H.},
   author={Shalika, J. A.},
   title={On Euler products and the classification of automorphic
   representations. I},
   journal={Amer. J. Math.},
   volume={103},
   date={1981},
   number={3},
   pages={499--558},
   issn={0002-9327},
}

\bib{jsextsq}{article}{
    author={Jacquet, Herv{\'e}},
    author={Shalika, Joseph},
     title={Exterior square $L$-functions},
 booktitle={Automorphic forms, Shimura varieties, and $L$-functions, Vol.\
            II (Ann Arbor, MI, 1988)},
    series={Perspect. Math.},
    volume={11},
     pages={143\ndash 226},
 publisher={Academic Press},
     place={Boston, MA},
      date={1990},
    }

\bib{jpss}{article}{
     author={Jacquet, Herv{\'e}},
     author={Piatetski-Shapiro, Ilja Iosifovitch},
     author={Shalika, Joseph},
      title={Automorphic forms on ${\rm GL}(3)$},
    journal={Ann. of Math. (2)},
     volume={109},
       date={1979},
     number={1~and~ 2},
      pages={169\ndash 258},
}

\bib{jiang}{article}{
   author={Jiang, Dihua},
   title={On the fundamental automorphic $L$-functions of ${\rm SO}(2n+1)$},
   journal={Int. Math. Res. Not.},
   date={2006},
   issn={1073-7928},
}

\bib{kimextsq}{article}{
    author={Kim, Henry H.},
     title={Langlands-Shahidi method and poles of automorphic $L$-functions:
            application to exterior square $L$-functions},
   journal={Canad. J. Math.},
    volume={51},
      date={1999},
    number={4},
     pages={835\ndash 849},
      issn={0008-414X},
}

\bib{kimgl4}{article}{
    author={Kim, Henry H.},
     title={Functoriality for the exterior square of ${\rm GL}\sb 4$ and the
            symmetric fourth of ${\rm GL}\sb 2$},
   journal={J. Amer. Math. Soc.},
    volume={16},
      date={2003},
    number={1},
     pages={139\ndash 183 (electronic)},
      issn={0894-0347},
}

\bib{langlandsdc}{article}{
    author={Langlands, Robert P.},
     title={Problems in the theory of automorphic forms},
 booktitle={Lectures in modern analysis and applications, III},
     pages={18\ndash 61. Lecture Notes in Math., Vol. 170},
 publisher={Springer},
     place={Berlin},
      date={1970},
}

\bib{eulerproducts}{book}{
    author={Langlands, Robert P.},
     title={Euler products},
 publisher={Yale University Press},
     place={New Haven, Conn.},
      date={1971},
     pages={v+53},
}

\bib{langlandsrealgroups}{article}{
    author={Langlands, Robert P.},
     title={On the classification of irreducible representations of real
            algebraic groups},
 booktitle={Representation theory and harmonic analysis on semisimple Lie
            groups},
    series={Math. Surveys Monogr.},
    volume={31},
     pages={101\ndash 170},
 publisher={Amer. Math. Soc.},
     place={Providence, RI},
      date={1989},
}

\bib{inforder}{article}{
        author={Miller, Stephen D.},
        author={Schmid, Wilfried},
        title={Distributions and analytic continuation of Dirichlet series},
    journal={J. Funct. Anal.},
        volume={214},
        date={2004},
        number={1},
        pages={155\ndash 220},
        issn={0022-1236},
 }

\bib{voronoi}{article}{
   author={Miller, Stephen D.},
   author={Schmid, Wilfried},
   title={Automorphic distributions, $L$-functions, and Voronoi summation
   for ${\rm GL}(3)$},
   journal={Ann. of Math. (2)},
   volume={164},
   date={2006},
   number={2},
   pages={423--488},
}

\bib{korea}{article}{
   author={Miller, Stephen D.},
   author={Schmid, Wilfried},
   title={The Rankin-Selberg method for automorphic distributions},
   conference={
      title={Representation theory and automorphic forms},
   },
   book={
      series={Progr. Math.},
      volume={255},
      publisher={Birkh\"auser Boston},
      place={Boston, MA},
   },
   date={2008},
   pages={111--150}
}

\bib{pairingpaper}{article}{author={Miller, Stephen D.}, author={Schmid, Wilfried},
        title={Pairings of automorphic distributions}, note={To appear in {\em Mathematische Annalen}, \url{http://arxiv.org/abs/1106.2364} }}

\bib{mirabolic}{article}{author={Miller, Stephen D.}, author={Schmid, Wilfried},
        title={Adelization of Automorphic Distributions and Mirabolic Eisenstein Series}, note={To appear in {\em Contemporary Mathematics}, volume in honor of Gregg Zuckerman's 60th birthday, Jeff Adams, Bong Lian, and Siddhartha Sahi, editors, \url{http://arxiv.org/abs/1106.2583}}}

\bib{rapiddecay}{article}{author={Miller, Stephen D.}, author={Schmid, Wilfried},
        title={On the rapid decay of cuspidal automorphic forms}, note={\url{http://arxiv.org/abs/1106.2149}}}

\bib{flato}{article}{
    author={Schmid, Wilfried},
     title={Automorphic distributions for ${\rm SL}(2,\Bbb R)$},
 booktitle={Conf\'erence Mosh\'e Flato 1999, Vol. I (Dijon)},
    series={Math. Phys. Stud.},
    volume={21},
     pages={345\ndash 387},
 publisher={Kluwer Acad. Publ.},
     place={Dordrecht},
      date={2000},
}

\bib{shahidi}{article}{
    author={Shahidi, Freydoon},
     title={A proof of Langlands' conjecture on Plancherel measures;
            complementary series for $p$-adic groups},
   journal={Ann. of Math. (2)},
    volume={132},
      date={1990},
    number={2},
     pages={273\ndash 330},
      issn={0003-486X},
}

\bib{stade}{article}{
    author={Stade, Eric},
     title={Mellin transforms of ${\rm GL}(n,\Bbb R)$ Whittaker functions},
   journal={Amer. J. Math.},
    volume={123},
      date={2001},
    number={1},
     pages={121\ndash 161},
      issn={0002-9327},
}

\bib{tadic}{article}{
   author={Tadi{\'c}, Marko},
   title={$\widehat{\rm GL}(n,\C)$ and $\widehat{\rm GL}(n,\R)$},
   conference={
      title={Automorphic forms and $L$-functions II. Local aspects},
   },
   book={
      series={Contemp. Math.},
      volume={489},
      publisher={Amer. Math. Soc.},
      place={Providence, RI},
   },
   date={2009},
   pages={285--313},
}

\bib{voganlarge}{article}{
   author={Vogan, David A., Jr.},
   title={Gel\cprime fand-Kirillov dimension for Harish-Chandra modules},
   journal={Invent. Math.},
   volume={48},
   date={1978},
   number={1},
   pages={75--98},
   issn={0020-9910},
}

\bib{Wallach:1983}{article}{
    author={Wallach, Nolan R.},
     title={Asymptotic expansions of generalized matrix entries of representations of real reductive groups},
 booktitle={Lie group representations, I 
 },
    series={Lecture Notes in Math.},
    volume={1024},
     pages={287\ndash 369},
 publisher={Springer},
     place={Berlin},
      date={1983},
}

\bib{Zhelobenko}{book}{
   author={{\v{Z}}elobenko, D. P.},
   title={Compact Lie groups and their representations},
   note={Translated from the Russian by Israel Program for Scientific
   Translations;
   Translations of Mathematical Monographs, Vol. 40},
   publisher={American Mathematical Society},
   place={Providence, R.I.},
   date={1973},
   pages={viii+448},
   review={\MR{0473098 (57 \#12776b)}},
}

\end{biblist}
\end{bibsection}

%
%
%
%

\end{document}